\newtheorem{thm}{Theorem}[section]
\newtheorem{lemma}[thm]{Lemma}
\newtheorem{proposition}[thm]{Proposition}
\newtheorem{corollary}[thm]{Corollary}
\newtheorem{remark}[thm]{Remark}
\newtheorem{defn}[thm]{Definition}
\newtheorem{hypothesis}[thm]{Hypothesis}
\newtheorem{example}[thm]{Example}
\theoremstyle{definition}
\theoremstyle{remark}
\newtheorem{rmk}[thm]{Remark}
\numberwithin{equation}{section}
\newcommand{\N}{\mathbb N}
\newcommand{\R}{\mathbb R}
\newcommand{\norm}[1]{\left\lVert#1\right\rVert}
\newcommand{\abs}[1]{\left\lvert#1\right\rvert}
\newcommand{\ps}[2]{\langle#1,#2\rangle}
\newcommand{\dscal}[3]{{_{#3}\left\langle #1,#2\right\rangle}_{{#3}^*}}
\title{\textbf{Stochastic and deterministic non-autonomous reaction-diffusion equations}}
\author[$\star$]{Davide A. Bignamini}
\author[$\dagger$]{Paolo De Fazio}
\date{}
\affil[$\star$]{\small{corresponding author, Dipartimento di Scienze e Alta Tecnologia, Università degli studi dell'Insubria, email-address: da.bignamini@uninsubtia.it}}
\affil[$\dagger$]{\small{Dipartimento di Scienze e Alta Tecnologia, Università degli studi dell'Insubria, email-address: paolo.defazio@uninsubria.it}}
\providecommand{\keywords}[1]{{\textit{Keywords}:} #1}
\providecommand{\subjclass}[1]{{\textit{2020 Mathematics Subject Classification}:} #1}
\begin{document}

\maketitle


\begin{abstract}
\noindent
In this paper we prove the well-posedness of non-autonomous deterministic and stochastic reaction-diffusion equations with a polynomial reaction term. Concerning the stochastic problem, we also prove a new result on the space-time regularity of the non-autonomous stochastic convolution.\\\phantom{a}\\
\keywords{Parabolic stochastic evolution equations, non-autonomous equations, mild solutions, generalized mild solutions, stochastic convolution, factorization method, space-time regularity, stochastic partial differential equations.}\\
\subjclass{28C20, 35K10, 35K57, 35J15, 60G15, 60H15}
\end{abstract}


\section{Introduction}
This paper investigates abstract non-autonomous deterministic and stochastic reaction-diffusion equations with a polynomial reaction term. Reaction-diffusion systems model a wide range of phenomena in various fields such as biology, ecology, physics, neuroscience, and nanobioscience. Deterministic models are often used to describe processes such as predator-prey dynamics, particle propagation, and birth-death mechanisms (see, for instance, \cite{ceb_par_rui_2024,ceb_par_rui_2025,hor_2007,men_pin_che_aky_2014,nak_moo_wei_fan_vas_shu_2012,nak_yam_1996}). The stochastic counterparts are particularly suited for modeling intrinsic randomness in many phenomena, such as metabolic processes, population dynamics, and organism-level movements (see, for example, \cite{dos_suc_hol_kat_min_2013,kaw_sai_2006,mas_1988}).
In most existing literature, the diffusion part (typically a second-order differential operator) is assumed to have time-independent coefficients. However, in many realistic scenarios, it is natural to consider models where the diffusion explicitly depends on time (see, for instance, \cite{van_rob_2020}). The main goal of this work is to establish well-posedness results for a class of abstract, fully non-autonomous reaction-diffusion equations, thus covering a wide range of time-dependent models.\\

Let $E$ be a separable Banach space. Let $T>0$ and $\Delta=\{(s,t)\in\R^2\ |\ 0\leq s \leq t\leq T\}$. Let $\{A(t)\}_{t\in [0,T]}$ be a family of closed linear operators on $E$ with possibly time-dependent and not necessarily dense domains, such that $A(t)$ is the generator of an analytic semigroup for all $t\in[0,T]$. Given   $s\in [0,T)$, we consider the following abstract non-autonomous parabolic problem 
\begin{align}\label{NAP}
\begin{cases}
u'(t)=A(t)u(t)+f(t),\qquad t\in(s,T],\\
u(s)=x\in E,
\end{cases}
\end{align}
where $f:[0,T]\rightarrow E$ is a smooth enough function. 

The well-posedness of the problem \eqref{NAP} was widely studied in the literature, see e.g. \cite{acq_1988,acq_ter_1987,paz_1983,sob_1961,tan_1979,yag_1976} and the bibliography therein. In these papers the authors defined the evolution operator $\{U(t,s)\}_{(s,t)\in\overline{\Delta}}$ associated to \eqref{NAP}; see Subsection \ref{sect: absevolproblem} for a summary about the main features of $\{U(t,s)\}_{(s,t)\in\overline{\Delta}}$ with particular attention to non-autonomous parabolic problems. This paper aims to investigate deterministic and stochastic perturbations of equation \eqref{NAP}.

In Section \ref{sect: WD} we study a semilinear version of \eqref{NAP} given by
\begin{align}\label{NAPP}
\begin{cases}
u'(t)=A(t)u(t)+F(t,u(t)),\qquad t\in(s,T],\\
u(s)=x\in E.
\end{cases}
\end{align}
Typically such equations are settled in a real separable Hilbert space $H$. However in significant examples, such as Example \ref{ex:intro} below, most nonlinearities are not well defined (or not regular) from $[0,T]\times H$ to $H$. Therefore we assume that a real separable Banach space $E$ is continuously embedded in $H$ and $F: [0,T]\times E\rightarrow H$ is Borel measurable. Under a suitable monotonicity assumption on $F$ and using the properties of $\{U(t,s)\}_{(s,t)\in\overline{\Delta}}$, we prove existence, uniqueness, and a priori estimates for the mild solution to \eqref{NAPP}. One of the most significant examples covered by the abstract Theorem \ref{proyos} is given by the following reaction-diffusion type equations.
\begin{example}[Non-autonomous reaction-diffusion equations]\label{ex:intro}
\leavevmode
\begin{enumerate}[(i)]
    \item  $\mathcal{O}$ is a bounded open subset of $\R^d$ with at least Lipschitz boundary and $d\in\N$.
    \item For every $t\in[0,T]$ the operator $A(t)$ is a realization (with suitable boundary conditions) in $L^2(\mathcal{O})$ of the following elliptic second order non-autonomous differential operator
\begin{equation*}
\mathcal{A}(t)\varphi(\xi)=\sum_{i,j=1}^d a_{ij}(t,\xi)D_{ij}^2\varphi(\xi)+\sum_{i=1}^d a_{i}(t,\xi)D_{i}\varphi(\xi)+a_0(t,\xi)\varphi(\xi),\quad t\in [0,T],\ \xi\in \mathcal{O},
\end{equation*}
where the coefficients $a_{ij}$, $a_i$ and $a_0$ are smooth enough functions.
    \item The function $F$ is a Nemytskii operator defined by
\[
F(t,x)(\xi)=-C_{2m+1}(t,\xi)x(\xi)^{2m+1}+\sum_{k=0}^{2m}C_{k}(t,\xi)x(\xi)^{k},
\]
where $m\in\N$, $C_0,...,C_{2m+1}:[0,T]\times\overline{\mathcal{O}}\longrightarrow\R$ are continuous and bounded functions and there exists a constant $c>0$ such that $\displaystyle{\inf_{(t,\xi)\in[0,T]\times\overline{\mathcal{O}}}C_{2m+1}(t,\xi)>c}$.
 \item We choose $H=L^2(\mathcal{O})$ and $E=L^{2(2m+1)}(\mathcal{O})$ or $E=C(\overline{\mathcal{O}})$.
\end{enumerate}
See Theorem \ref{esempio_teorema} for more details about the assumptions on $\mathcal{O}$, $A(t)$ and $F$.
\end{example}
To the best of our knowledge, this is the first well-posedness result concerning mild solutions to \eqref{NAPP} in the genuinely non-autonomous case. Instead, the autonomous version of \eqref{NAPP} has already been studied in the literature; see, for instance \cite[Chapter 3]{tem_1997}.\\

In Section \ref{sect:WS} we study the following stochastic perturbation of \eqref{NAPP}
\begin{align}\label{stocpbintro}
\begin{cases}
dX(t)=[A(t) X(t)+F(t,X(t))]dt+B(t)dW(t),\ \ \ t\in(s,T],\\
X(s)=x\in H,
\end{cases}
\end{align}
where $\{W(t)\}_{t\in[0,T]}$ is a $H$-cylindrical Wiener process and $\{B(t)\}_{t\in[0,T]}$ is a family of linear bounded operators on $H$. We establish the existence and uniqueness of a generalized mild solution to \eqref{stocpbintro} (see Theorems \ref{solMild} and \ref{limmild}) exploiting the well-posedness results for \eqref{NAPP} (see Theorem \ref{proyos}). For the analogous results in the autonomous case (namely $A(t)\equiv A$ and $B(t)\equiv B$) we refer to \cite{big_2021}, \cite[Chapters 6 and 7]{cer_2001}, and \cite[Chapter 7]{dap_zab_2014}. Moreover, in  Theorems \ref{solMild} and \ref{limmild} we provide some useful pathwise estimates for the solution.

The literature on abstract non-autonomous stochastic equations \eqref{stocpbintro} is currently rather limited. For the case $F\equiv 0$ we refer to \cite{add_def_2025,big_def_2024,cer_lun_2025,def_2022,kna_2011,ver_zim_2008}. In \cite{fan_2015} the author investigates a stochastic equation similar to \eqref{stocpbintro} driven by a multiplicative noise, but $F$ is assumed to be continuous from $H$ to $H$. In \cite{ver_2010}, even a more general framework is considered. Indeed the functions $A$, $F$, and $B$ are random. However, the Lipschitz-type assumption (H2) in \cite[Theorem 1.1]{ver_2010} does not allow  $F$ to have polynomial growth. Hence, to the best of our knowledge, the abstract result presented in this paper is the first that applies to reaction-diffusion equations of Example \ref{ex:intro}.\\

We conclude the introduction by highlighting some significant differences between the study of the autonomous and non-autonomous cases. 

Let $s\in [0,T)$. We stress that to deduce the well-posedness of \eqref{stocpbintro} from the well-posedness of \eqref{NAPP}, we need a result of regularity for the stochastic convolution process $\{Z_s(t)\}_{t\in[s,T]}$ defined by
\[
Z_s(t):=\int_s^tU(t,r)B(r)dW(r),\qquad t\in[s,T].
\]
Many authors studied the properties of the process $\{Z_s(t)\}_{t\in[s,T]}$, see for instance \cite{dap_ian_tub_82,pro_ver_2014,seid_1993,seid_2003,tub_82,van_ver_2020,ver_zim_2008,ver_2010}. 
However, it appears that none of the above mentioned results provides explicit assumptions on $B(t)$ to guarantee the space-time regularity of the process $\{Z_s(t)\}_{t \in [s,T]}$ for the case considered in Example \eqref{ex:intro}. Instead, such criteria are well known in the autonomous framework; see, for instance, \cite[Section 2.2.2]{dap_2004} and \cite[Sections 4.1 and 4.2]{orr_fuh_16}. In Corollary \ref{prop:carlo} we address this gap. Moreover, our result provides even sharper conditions than the ones of \cite[Section 4.2]{orr_fuh_16} in the autonomous case; see Corollary \ref{prop:carlo-vero} and Remark \ref{rmk:carlo}. Both results exploit a factorization formula similarly to \cite{dap_2004,dap_kwa_zab_1987,seid_1993} to prove a sufficient condition that guarantees the space-time regularity of the stochastic convolution \eqref{C-HS}. However to check that such a sufficient condition holds we use a completely different approach similar to the one in \cite[Lemma 4.3]{cer_lun_2025}.

Finally, we emphasize that in the autonomous case mild solutions to stochastic dissipative systems of the type \eqref{stocpbintro} have been extensively studied in the literature using semigroup theory; see, for instance \cite{agr_ver_2023,big_2021,big_fer_2024,cer_2001,cer_dap_2012, 
cer_dap_fla_2013,dap_zab_2014,hai_2002,hai_jon_2011} and the references therein. In the non-autonomous case, semigroup theory is replaced by the theory of evolution operators, which is significantly less developed. This leads to substantial complications in the analysis of \eqref{NAPP}, \eqref{stocpbintro} and of the process $\{Z_s(t)\}_{t\in[s,T]}$.

\section{Preliminaries}
In this section, we state some basic definitions and results that we will be essential in the sequel.

\subsection{Notations}
Let $(V,\norm{\,\cdot\,}_V)$ and $(W,\norm{\,\cdot\,}_W)$ be two Banach spaces. We denote by $\mathscr{L}(V;W)$ the space of linear continuous operators from $V$ to $W$. If $W=\R$ we set $V^*=\mathscr{L}(V;\R)$. We denote by $I$ the identity operator on $V$ and by $\mathcal{B}(V)$ the Borel $\sigma$-field of $V$. 

$B(V;W)$ is the space of Borel measurable function from $V$ to $W$ and $B_b(V;W)$ is the subset of $B(V;W)$ consisting of bounded functions. For every interval $I\subseteq\R$ we denote by $C_b(I;V)$ the set of continuous bounded functions from $I$ to $V$ and, for every $k\in\N$, we denote by $C^k_b(I;V)$ the set of bounded and $k$-times differentiable functions with bounded and continuous derivatives up to order $k$. If $I$ is compact, we simply write $C(I;V)$ and $C^k(I;V)$. 

Let $(E,\norm{\,\cdot\,}_E)$ be a Banach space continuously embedded in $V$ and a function $G: {\rm D}(G)\subseteq V{\longrightarrow} V$. We call part of $G$ in $E$ the function $G_{E}:{\rm D}(G_{E})\subseteq E{\longrightarrow} E$ defined by
\begin{align*}
{\rm D}(G_{E})&:=\{x\in {\rm D}(G)\cap E\; \mbox{s.t.}\; G(x)\in E \},\\& G_{E}(x):=G(x),\; \mbox{for all}\;x\in {\rm D}(G_{E}).
\end{align*}
Let $B:{\rm D}(B)\subseteq V\rightarrow V$ we denote by $\rho(B)$ the resolvent set of $B$ and by $R(\lambda,B):=(\lambda I-B)^{-1}$ the resolvent of $B$ for $\lambda\in\rho(B)$.

Let $(\Omega,\mathcal{F},\mathbb{P})$ be a complete probability space. Let $\xi:(\Omega,\mathcal{F},\mathbb{P}){\longrightarrow} (V,\mathcal{B}(V))$ be a random variable, we denote by $\mathbb{E}[\xi]$ the expectation of $\xi$ with respect to $\mathbb{P}$.

Let $I\subseteq\R$ and let $\{Z(t)\}_{t\in I}$ be a $V$-valued stochastic process. We say that $\{Z(t)\}_{t\in I}$ is continuous if the map $Z(\cdot):I {\longrightarrow} V$ is continuous $\mathbb{P}$-almost surely.

For all $p\geq 1$ we denote by $\mathcal{C}^p(I;V)$ the space of $V$-valued continuous stochastic processes such that
\[
\norm{Z(\cdot)}_p:=\mathbb{E}\left[\sup_{t\in I}\norm{Z(t)}_V^p\right]<+\infty.
\]
We notice that $(\mathcal{C}^p(I;V),\norm{\,\cdot\,}_p)$ is a Banach space, for every $p\geq 1$.

\subsection{Left differentiable and dissipative mappings} 
We refer to \cite[Appendix A]{cer_2001}, \cite[Appendix D]{dap_zab_2014} for all the results in this section.

Let $(V,\norm{\,\cdot\,}_V)$ be a separable Banach space. For every $x\in V$, we define the sub-differential $\partial \norm{x}_V$ of $\norm{\,\cdot\,}_V$ at $x\in V$ as
\[
\partial \norm{x}:=\{ x^*\in V^*\; |\; \norm{x+y}_V\geq\norm{x}_V+x^*(y),\;\forall\; y\in V\}.
\]
Moreover $\partial \norm{x}_V$ is closed and convex and for every $x\neq 0$ we have
\[
\partial \norm{x}=\{ x^*\in V^*\; |\,~_{V}\ps{x}{x^*}_{V^\star}=\norm{x}_V,\; \norm{x^*}_{V^*}=1\}.
\]

Let $t_0,t_1 \in\R$ and let $f:[t_0,t_1]{\longrightarrow}V$ be a continuous function. We say that $f$ is right-differentiable at $t\in [t_0,t_1)$ if there exists $L^+\in\R$ such that
\begin{equation}\label{Llimit+}
\dfrac{d^+f(t)}{dt}:=\lim_{\varepsilon{\rightarrow} 0^+}\dfrac{f(t+\varepsilon)-f(t)}{\varepsilon}=L^+.
\end{equation}
We say that $f$ is right-differentiable if $f$ is right-differentiable at $t$ for all $t\in [t_0,t_1)$.
Moreover $f$ is left-differentiable if for every $t\in (t_0,t_1]$ there exists $L^-\in\R$ such that
\begin{equation}\label{Llimit-}
\dfrac{d^-f(t)}{dt}:=\lim_{\varepsilon{\rightarrow} 0^-}\dfrac{f(t+\varepsilon)-f(t)}{\varepsilon}=L^-.
\end{equation}
We say that $f$ is left-differentiable, if $f$ is left-differentiable at $t$ for all $t\in (t_0,t_1]$.
If $L^+=L^-$ at some $t\in(t_0,t_1)$ then we say that $f$ is differentiable at $t$ and if  $L^+=L^-$ for all $t\in(t_0,t_1)$
we say that $f$ is differentiable in $(t_0,t_1)$.

Let $u:[t_0,t_1]{\longrightarrow}V$ be a continuous and left-differentiable function. By \cite[Proposition A.1.3]{cer_2001}, the function $\gamma:[t_0,t_1]{\longrightarrow} [0,+\infty)$ defined by $\gamma(t)=\norm{u(t)}_V$ is left-differentiable and right-differentiable. Further, we have 
\begin{align}
\label{Ldiff+}
&\dfrac{d^+\gamma}{dt}(t)=\max\{~_{V}\ps{u'(t)}{x^*}_{V^\star}\;:\; x^*\in\partial\norm{u(t_0)}_V \},\ \ t\in[t_0,t_1),\\
\label{Ldiff-}
&\dfrac{d^-\gamma}{dt}(t)=\min\{~_{V}\ps{u'(t)}{x^*}_{V^\star}\;:\; x^*\in\partial\norm{u(t_0)}_V \},\ \ t\in(t_0,t_1].
\end{align}

Moreover, let $b\in\R$ and let $g:[t_0,t_1]{\longrightarrow} [0,+\infty)$ be a continuous function. If
\[
\dfrac{d^-\gamma}{dt}(t)\leq b\gamma(t)+g(t), 
\]
then, by \cite[Proposition A.4]{big_fer_2024}, for every $t\in [t_0,t_1]$, we have
\begin{equation}\label{varofcost}
\gamma(t)\leq e^{b(t-t_0)}\gamma(t_0)+\int^t_{t_0}e^{b(t-s)}g(s)ds,\quad t\in [t_0,t_1].
\end{equation} 

\begin{defn}\label{dissip}
A map $f:{\rm D}(f)\subseteq V{\longrightarrow}V$ is said to be dissipative if, for every $\alpha>0$ and $x,y\in {\rm D}(f)$, we have
\begin{equation}\label{disban1}
\norm{x-y-\alpha(f(x)-f(y))}_{V}\geq \norm{x-y}_{V}.
\end{equation}
We say that $f$ is m-dissipative if the range of $\lambda I-f$ is the whole space $V$ for some $\lambda>0$ (and so for all $\lambda>0$).
\end{defn}

\begin{remark}If in Definition \ref{dissip} $f=A$ for some linear operator $A$, then \eqref{disban1} reads as
\[
\norm{(I-A)x}_{V}\geq \lambda \norm{x}_{V}, \quad \forall \lambda>0,\; x\in{\rm D}(A).
\]
\end{remark}

Using the notion of sub-differential we have the following useful characterization of dissipative mappings.

\begin{proposition}
Let $f:{\rm D}(f)\subseteq V\longrightarrow V$. $f$ is dissipative if and only if, for every $x,y\in {\rm D}(f)$ there exists $z^*\in\partial\norm{x-y}_V$ such that
\begin{equation}\label{disban}
\dscal{f(x)-f(y)}{z^*}{V}\leq 0.
\end{equation}
If $V$ is a Hilbert space endowed with the scalar product $\ps{\cdot}{\cdot}_V$, \eqref{disban} reads as
\begin{equation*}
\ps{f(x)-f(y)}{x-y}_{V}\leq 0.
\end{equation*}
\end{proposition}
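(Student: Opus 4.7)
The plan is to prove the two implications separately and then to specialize the characterization to the Hilbert setting.

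For the easy direction, assume that for every $x,y\in {\rm D}(f)$ there exists $z^*\in\partial\norm{x-y}_V$ with $\dscal{f(x)-f(y)}{z^*}{V}\leq 0$. If $x=y$ there is nothing to prove, since \eqref{disban1} is trivially satisfied. If $x\neq y$, the characterization of $\partial\norm{x-y}_V$ recalled just before \eqref{Llimit+} yields $\norm{z^*}_{V^*}=1$ and $\dscal{x-y}{z^*}{V}=\norm{x-y}_V$. Hence, for every $\alpha>0$,
\begin{align*}
\norm{x-y-\alpha(f(x)-f(y))}_V &\geq \dscal{x-y-\alpha(f(x)-f(y))}{z^*}{V}\\
&= \norm{x-y}_V-\alpha\dscal{f(x)-f(y)}{z^*}{V}\geq\norm{x-y}_V,
\end{align*}
which is precisely \eqref{disban1}.

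For the nontrivial direction, assume $f$ is dissipative and fix $x,y\in{\rm D}(f)$ with $x\neq y$ (the case $x=y$ being immediate since $\partial\norm{0}_V$ is the closed unit ball of $V^*$). For each $\alpha>0$, Hahn--Banach yields $z^*_\alpha\in V^*$ with $\norm{z^*_\alpha}_{V^*}=1$ and
\[
\dscal{x-y-\alpha(f(x)-f(y))}{z^*_\alpha}{V}=\norm{x-y-\alpha(f(x)-f(y))}_V.
\]
Combining this with \eqref{disban1} gives, after rearrangement,
\[
\dscal{x-y}{z^*_\alpha}{V}\geq\norm{x-y}_V+\alpha\dscal{f(x)-f(y)}{z^*_\alpha}{V},
\]
and since $\dscal{x-y}{z^*_\alpha}{V}\leq\norm{x-y}_V$, we deduce both $\dscal{f(x)-f(y)}{z^*_\alpha}{V}\leq 0$ and $\dscal{x-y}{z^*_\alpha}{V}\geq\norm{x-y}_V-\alpha\norm{f(x)-f(y)}_V$. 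The main technical point is the passage to the limit $\alpha\to 0^+$: since $V$ is separable, the closed unit ball of $V^*$ is weak-$\ast$ sequentially compact, so there exist $\alpha_n\to 0^+$ and $z^*\in V^*$ with $\norm{z^*}_{V^*}\leq 1$ such that $z^*_{\alpha_n}\to z^*$ in the weak-$\ast$ topology. Passing to the limit in the two inequalities above yields $\dscal{f(x)-f(y)}{z^*}{V}\leq 0$ and $\dscal{x-y}{z^*}{V}\geq\norm{x-y}_V$. Combined with $\norm{z^*}_{V^*}\leq 1$, this forces $\norm{z^*}_{V^*}=1$ and $\dscal{x-y}{z^*}{V}=\norm{x-y}_V$, so $z^*\in\partial\norm{x-y}_V$, as required.

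Finally, in the Hilbert case, the Riesz isomorphism identifies $\partial\norm{x-y}_V$ with $\{(x-y)/\norm{x-y}_V\}$ for $x\neq y$, so the inequality $\dscal{f(x)-f(y)}{z^*}{V}\leq 0$ becomes $\ps{f(x)-f(y)}{x-y}_V\leq 0$ after multiplication by $\norm{x-y}_V$; the case $x=y$ is trivial. The main obstacle in the whole argument is the weak-$\ast$ compactness step, which is where separability of $V$ is used in an essential way.
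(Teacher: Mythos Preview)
Your proof is correct and is essentially the standard argument for this classical characterization. Note, however, that the paper does not give its own proof of this proposition: it is stated without proof, with a blanket reference at the start of the subsection to \cite[Appendix A]{cer_2001} and \cite[Appendix D]{dap_zab_2014}, so there is no in-paper argument to compare against. One minor comment: separability of $V$ makes the weak-$*$ compactness step conveniently sequential, but it is not truly essential---Banach--Alaoglu plus a net argument gives the same conclusion in any Banach space---so your closing remark slightly overstates its role.
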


\begin{remark}\label{DisEN}
Let $B:{\rm D}(B)\subseteq V\rightarrow V$ be a linear operator. By \cite[Proposition 3.14]{eng_nag_2006} if $B$ is m-dissipative then $(0,+\infty)\subseteq \rho(B)$.
\end{remark}

\subsection{The Yosida approximations for dissipative functions}\label{sec:Yosida}

In this section we introduce a useful smoothing sequence for dissipative functions (see \cite[Appendix A]{cer_2001}, \cite[Appendix D]{dap_zab_2014}).

Let $(V, \norm{\,\cdot\,}_V)$ be a separable Banach space and let $F: [0,T]\times {\rm D}_{F}\subseteq [0,T]\times V\longrightarrow V$ be such that $F(t,\cdot)$ is a possibly non linear function. We assume that there exists $\zeta\in\R$ such that 
\begin{equation}\label{Umdiss}
F(t,\cdot)-\zeta I\;\; {\mbox{is m-dissipative for all}}\;t\in[0,T].
\end{equation}
By the m-dissipativity, for every $k\in\N$ the map $$y\longrightarrow  y-\frac{1}{k}(F(t,y)-\zeta  y)$$ is bijective for all $t\in [0,T]$. Hence for every $t\in[0,T]$ and for every $x\in V$, there exists a unique $J_k(t,x)\in {\rm D}_{F}$ such that
\begin{align}\label{eq_YO1}
J_k(t,x)-\frac{1}{k} [F(t,J_k(t,x))-\zeta J_k(t,x)]=x.
\end{align}
We define $F_k:[0,T]\times V\longrightarrow V$ as
\begin{align}\label{defyod}
F_k(t,x):=F(t,J_k(t,x)),\qquad x\in V,\ t\in[0,T],\ k\in\N.
\end{align}

\begin{defn}
Let $F: [0,T]\times {\rm D}_{F}\subseteq [0,T]\times V\longrightarrow V$ and let $\zeta\in\R$ be such that $F(t,\cdot)-\zeta I$ is m-dissipative for all $t\in[0,T]$. We denote by $\{F_k\}_{k\in \N}$ the family of the Yosida approximations of $F$ defined in \eqref{defyod}.  
\end{defn}

The proof of the following lemma is standard in the case where $F$ is independent of $t$. Although here $F$ also depends on time, the constant $\zeta$ in \eqref{Umdiss} is uniform with respect to $t$. Due to this reason, the calculations are quite similar to the autonomous case. However, we have decided to include the proof for the reader's convenience. If $\zeta=0$ we set $\displaystyle{\frac{1}{\abs{\zeta}}=+\infty}$.

\begin{lemma}\label{Lemma_YO}
Let $F: [0,T]\times {\rm D}_{F}\subseteq [0,T]\times V\longrightarrow V$ and let $\zeta\in\R$ be such that $F(t,\cdot)-\zeta I$ is m-dissipative for all $t\in[0,T]$. Let $J_k:[0,T]\times V\longrightarrow{\rm D}_{F}$ be the mapping satisfying \eqref{eq_YO1}. Then for all $t\in [0,T]$
\begin{align}
\lim_{k{\rightarrow} +\infty}\norm{J_{k}(t,x)-x}_{V}=0,\quad x\in {\rm D}_{F}.\label{conveyos}
\end{align}
For every $k>\zeta$ and $t\in[0,T]$ the function $F_k(t,\cdot)-\zeta I$ is $m$-dissipative and we have
\begin{align}
\norm{J_k(t,x)-x}_{V}&\leq \frac{1}{k}\left(\norm{F(t,x)}_V+\max\{0,\zeta\}\norm{x}_V\right),\qquad\qquad \ \ x\in {\rm D}_{F},\label{cji}\\
\|F_{k}(t,x)\|_{V}&\leq 3(\norm{F(t,x)}_{V}+ \max\{0,\zeta\}\norm{x}_{V}),\phantom{aaaaaaaaaii}x\in {\rm D}_{F},\label{vy1}\\
\norm{F_k(t,x)-F_k(t,y)}_V&\leq 3k\norm{x-y}_V, \phantom{aaaaaaaaaaaaaaaaaaaaaaaaaa}\,x,y\in{V},\label{lipdeltaV}\\
\norm{J_k(t,x)-J_k(t,y)}_{V}& \leq \norm{x-y}_V,\phantom{aaaaaaaaaaaaaaaaaaaaaaaaaaaai}x,y\in{V}\label{lipJ}.
\end{align}
For every $x,y\in {\rm D}_{F}$, $k,h>\zeta$ and $t\in[0,T]$ we have
\begin{equation}\label{superyE}
\norm{J_k(t,x)-J_h(t,y)}_{V}\leq\norm{x-y}_{V}+ \left(\frac{1}{k}+\frac{1}{h}\right)(\norm{F(t,y)}_{V}+\norm{F(t,x)}_{V}+\abs{\zeta}\norm{y}_{V}+\abs{\zeta}\norm{x}_{V}).
\end{equation} 
If in addition ${V}$ is a Hilbert space, then  there exists $C_\zeta>0$ such that for every $x,y\in {\rm D}_{F}$ and $k,h>\zeta$ we have
{\small\begin{equation}\label{supery}
\abs{\ps{F_{k}(t,x)-F_h(t,y)}{x-y}_{V}}\leq \abs{\zeta}\norm{x-y}_{V}^2+C_\zeta\left(\frac{1}{k}+\frac{1}{h}\right)(\norm{F(t,x)}_{V}^2+\norm{F(t,y)}_{V}^2+\norm{x}_{V}^2+\norm{y}_{V}^2).
\end{equation}}
\end{lemma}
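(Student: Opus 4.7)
The whole proof rests on a single reduction. For each fixed $t\in[0,T]$, set $G_t := F(t,\cdot) - \zeta I$, which is m-dissipative on $V$; then $J_k(t,\cdot) = (I - k^{-1}G_t)^{-1}$ is the classical Yosida resolvent of $G_t$, and solving \eqref{eq_YO1} for $F(t,J_k(t,x))$ yields the identity
\begin{equation*}
F_k(t,x) = (k+\zeta)\,J_k(t,x) - k\,x, \qquad F_k(t,\cdot) - \zeta I = (k+\zeta)\bigl(J_k(t,\cdot) - I\bigr).
\end{equation*}
Because the constant $\zeta$ in \eqref{Umdiss} is uniform in $t$, every bound is established pointwise in $t$ and the argument parallels the autonomous theory of \cite[Appendix A]{cer_2001} and \cite[Appendix D]{dap_zab_2014}.

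From this reduction the non-mixed estimates come in a tight sequence. Bound \eqref{lipJ} is immediate from dissipativity \eqref{disban1} of $G_t$ applied with $\alpha = 1/k$ to $u = J_k(t,x)$, $v = J_k(t,y)$, using that $(I - k^{-1}G_t)(J_k(t,z)) = z$. Bound \eqref{cji} follows from \eqref{lipJ} by choosing $y = (I - k^{-1}G_t)(x)$ (legitimate for $x \in D_F$), which produces $J_k(t,y) = x$ and hence $\norm{J_k(t,x) - x}_V \leq k^{-1}\norm{G_t(x)}_V \leq k^{-1}(\norm{F(t,x)}_V + |\zeta|\norm{x}_V)$; the stated sharper form with $\max\{0,\zeta\}$ follows because for $\zeta \leq 0$ the operator $F(t,\cdot)$ itself is m-dissipative, so that a separate estimate without the $\zeta$-term can be invoked. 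Plugging \eqref{cji} and \eqref{lipJ} into the identity above gives \eqref{vy1} and \eqref{lipdeltaV} (the hypothesis $k > \zeta$ keeps $k+\zeta$ positive and the total Lipschitz constant inside $3k$), while \eqref{conveyos} is a direct consequence of \eqref{cji}. The m-dissipativity of $F_k(t,\cdot) - \zeta I$ reduces, via the factorization in the first display, to the elementary fact that for any $1$-Lipschitz self-map $J$ of $V$ the map $J - I$ is m-dissipative: $\norm{(1+\alpha)(u-v)-\alpha(J(u)-J(v))}_V \geq \norm{u-v}_V$ gives dissipativity, and $(\lambda+1)I - J$ is invertible for every $\lambda > 0$ by the Banach contraction principle.

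The two mixed estimates require a bit more care. For \eqref{superyE} the triangle decomposition
\begin{equation*}
\norm{J_k(t,x) - J_h(t,y)}_V \leq \norm{J_k(t,x) - J_k(t,y)}_V + \norm{J_k(t,y) - y}_V + \norm{y - J_h(t,y)}_V,
\end{equation*}
combined with \eqref{lipJ} on the first term and \eqref{cji} on the last two, already yields a bound sharper than the one claimed. For the Hilbert-space estimate \eqref{supery} I would take the inner product of
\begin{equation*}
F_k(t,x) - F_h(t,y) = \bigl(G_t(J_k(t,x)) - G_t(J_h(t,y))\bigr) + \zeta\bigl(J_k(t,x) - J_h(t,y)\bigr)
\end{equation*}
with
\begin{equation*}
x - y = -k^{-1}G_t(J_k(t,x)) + \bigl(J_k(t,x) - J_h(t,y)\bigr) + h^{-1}G_t(J_h(t,y)),
\end{equation*}
expand, drop the non-positive contribution $\langle G_t(J_k(t,x))-G_t(J_h(t,y)), J_k(t,x)-J_h(t,y)\rangle_V \leq 0$ by Hilbert-space dissipativity of $G_t$, and extract the $\zeta\norm{x-y}_V^2$ term from the $\zeta$-diagonal by expanding $\norm{J_k(t,x)-J_h(t,y)}_V^2$ around $\norm{x-y}_V^2$ with Young-absorbable remainders. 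The main obstacle is the bookkeeping for the four cross terms left over: each carries a factor $1/k$ or $1/h$ and must be split by Young's inequality into a piece absorbed by $|\zeta|\norm{x-y}_V^2$ and a piece proportional to $1/k + 1/h$, using $\norm{G_t(J_k(t,x))}_V = \norm{F_k(t,x)-\zeta J_k(t,x)}_V$ together with \eqref{vy1} and \eqref{cji} to replace the $G_t$-norms by $\norm{F(t,x)}_V$, $\norm{F(t,y)}_V$, $\norm{x}_V$, $\norm{y}_V$, while keeping $C_\zeta$ independent of $k$ and $h$.
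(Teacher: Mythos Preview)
Your proof is correct and follows essentially the same route as the paper: both reduce to the m-dissipative $G_t := F(t,\cdot) - \zeta I$, establish the standard Yosida resolvent estimates for $J_k = (I - k^{-1}G_t)^{-1}$, and translate back to $F_k$ via the identity $F_k = G_k + \zeta J_k$ (your factorization $F_k - \zeta I = (k+\zeta)(J_k - I)$ is an equivalent repackaging), handling $\zeta \leq 0$ by the same reduction to $\zeta = 0$. One minor slip: your parenthetical that ``$k > \zeta$ keeps $k+\zeta$ positive'' fails for $\zeta < 0$, though this is harmless given your case split; conversely, your explicit Banach-fixed-point argument for the m-dissipativity of $F_k(t,\cdot) - \zeta I$ fills in a detail the paper only asserts.
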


\begin{proof}
Let $k>\zeta$. We set
\begin{align}
G(t,x)&:=F(t,x)-\zeta  x, \qquad t\in[0,T],\ x\in{\rm D}_{F},\label{defnG}\\
G_{k}(t,x)&:=G(t,J_k(t,x)), \qquad t\in[0,T],\ x\in V,
\end{align}
and by \eqref{eq_YO1} we have
\begin{align}\label{eq_YO}
J_k(t,x)-\frac{1}{k} G_k(t,x)=x.
\end{align}
We prove first that $G_{k}(t,\cdot)$ is dissipative on ${V}$ for every $t\in[0,T]$ and
\begin{align}
\norm{J_k(t,x)-x}_{V}&\leq \frac{1}{k}\norm{G(t,x)}_{V}, \qquad x\in {\rm D}_{F},\ t\in[0,T],\label{G1}\\
\|G_{k}(t,x)\|_{V}&\leq \|G(t,x)\|_{V},\qquad\ \  \, x\in {\rm D}_{F},\ t\in[0,T],\label{G2}\\
\norm{G_k(t,x)-G_k(t,y)}_V&\leq 2k\norm{x-y}_V,\qquad \ x,y\in{V},\ t\in[0,T],\label{G3}\\
\norm{J_k(t,x)-J_k(t,y)}_{V} &\leq \norm{x-y}_V,\qquad \ \ \ \ x,y\in{V},\ t\in[0,T]\label{G4}.
\end{align}
For every $t\in[0,T]$ and $x\in{\rm D}_{F}$, by dissipativity of $G(t,\cdot)$ (i.e. \eqref{disban1} with $\alpha=k^{-1}$) we have
\begin{align*}
\norm{x-J_k(t,x)}_{V}\leq \norm{x-J_k(t,x)-\frac{1}{k}\left[G(t,x)-G_k(t,x)\right]}_{V}.
\end{align*}
So by the definition of $G_k$ and \eqref{eq_YO} we obtain \eqref{G1} and \eqref{G2}. 
For every $t\in[0,T]$ and $x,y\in {V}$, by dissipativity of $G(t,\cdot)$ we have
\begin{align}\label{II1}
\norm{J_k(t,x)-J_k(t,y)}_{V}\leq \norm{J_k(t,x)-J_k(t,y)-\frac{1}{k}[G_k(t,x)-G_k(t,y)]}_{V}.
\end{align}
By the definition of $G_k$, \eqref{eq_YO} and \eqref{II1} we obtain \eqref{G4} and
\begin{align*}
\frac{1}{k}\norm{G_k(t,x)-G_k(t,y)}-\norm{x-y}_{V}\leq\norm{\frac{1}{k}(G_k(t,x)-G_k(t,y))+x-y}_{V}\leq \norm{x-y}_{V},
\end{align*}
that yields \eqref{G3}. For every $t\in[0,T]$ and $x,y\in {V}$ and $\alpha>0$, by \eqref{eq_YO} and \eqref{G4} we have
\begin{align*}
\norm{x-y-\alpha(G_k(t,x)-G_k(t,y))}_{V}=\norm{\left(1+\alpha k\right)(x-y)-\alpha k(J_k(t,x)-J_k(t,y))}_{V}\geq \norm{x-y}_{V}
\end{align*}
so $G_k(t,\cdot)$ is dissipative. If $\zeta\leq 0$ then we can assume $\zeta=0$ and $F_k=G_k$. If $\zeta>0$, since ${\rm D}(F)={\rm D}(G)=[0,T]\times{\rm D}_{F}$ and $k>\zeta$, then \eqref{G1},\eqref{G2}, \eqref{G3} and \eqref{G4} yield \eqref{cji}, \eqref{vy1}, \eqref{lipdeltaV} and \eqref{lipJ}.
We prove only \eqref{vy1}. For every $t\in [0,T]$ and $x\in {\rm D}_{F}$, we have
\begin{align*}
\norm{F_k(t,x)}_V&=\norm{G_{k}(t,x)+\zeta J_k(t,x)}_V=\norm{G_{k}(t,x)+\zeta [J_k(t,x)-x]+\zeta x}_V\\
&\leq\norm{G_k(t,x)}_V+\zeta \frac{1}{k} \norm{G_k(t,x)}_V+\zeta\norm{x}_V\\
&\leq 2\norm{G(t,x)}_V+\zeta\norm{x}_V\\
&\leq3\left(\norm{F(t,x)}_V+\zeta\norm{x}_V\right).
\end{align*}
Now we show that $F_k(t,\cdot)-\zeta  I$ is dissipative in ${V}$. Let $\alpha>0$, $t\in[0,T]$ and $x,y\in {V}$. By \eqref{defnG}, \eqref{eq_YO} and the dissipativity of $G_{k}(t,\cdot)$ we have
\begin{align*}
&\norm{x-y-\alpha[F_k(t,x)-\zeta  x-F_k(t,y)+\zeta  y]}_{V}\\
=&\norm{x-y-\alpha\{F_k(t,x)-\zeta  [J_k (t,x)-k G_{k}(t,x)]-F_k(t,y)+\zeta  [J_k (t,y)-k G_{k}(t,y)]\}}_{V}\\
=&\norm{x-y-\alpha[G_{k}(t,x)-\zeta  k G_{k}(t,x)-G_{k}(t,y)+\zeta  k G_{k}(t,y)]}_{V}\\
=&\norm{x-y-\alpha(1-k\zeta )[G_{k}(t,x)- G_{k}(t,y)]}_{V}\geq \norm{x-y}_{V},
\end{align*}
and so, since $k>\zeta$, $F_{k}(t,\cdot)-\zeta I$ is dissipative on ${V}$. 
Let $h>\zeta$. For every $t\in[0,T]$ and $x,y\in{\rm D}_{F}$, by \eqref{defnG}, \eqref{G1} and \eqref{G3} we have 
\begin{align*}
\norm{J_k(t,x)-J_h(t,y)}_V&=\norm{x+\frac{1}{k} G_{k}(t,x)-y-\frac{1}{h}G_h(t,y)}_V\\
&\leq \norm{x-y}_V+\frac{1}{k}\norm{G(t,x)}_V+\frac{1}{h}\norm{G(t,y)}_V\\
&\leq\norm{x-y}_V+\left(\frac{1}{k}+\frac{1}{h}\right)\left(\norm{F(t,x)}_V+\norm{F(t,y)}_V+\abs{\zeta}\norm{x}_V+\abs{\zeta}\norm{y}_V\right).
\end{align*}
Let $h>\zeta$. For every $t\in[0,T]$ and $x,y\in{\rm D}_{F}$, by \eqref{defnG}, \eqref{G2} and \eqref{G4} we have
\begin{align*}
&\abs{\ps{F_{k}(t,x)-F_h(t,y)}{x-y}_{V}}=\abs{\ps{G_{k}(t,x)-G_h(t,y)}{x-y}_{V}+\zeta \ps{J_{k}(t,x)-J_h(t,y)}{x-y}_{V}}\\
&\leq \left(\frac{1}{k}+\frac{1}{h}\right)(\norm{G(t,x)}_{V}+\norm{G(t,y)}_{V})(\norm{x}_{V}+\norm{y}_{V})+\abs{\zeta} \norm{x-y}_{V}^2,
\end{align*}
so by the Young inequality and \eqref{defnG} we obtain \eqref{supery} and \eqref{superyE} follows.
\end{proof}

\begin{remark}\label{punto-fisso}
If we assume that there exist $x_0\in{\rm D}_{F}$  such that 
\[
F(t,x_0)=\zeta x_0,\ \ t\in[0,T],
\]
then by \eqref{eq_YO}, for every $k>\zeta$ we have $J_k(t,x_0)=x_0$. Hence by \eqref{lipJ} for every $x\in{V}$ we get
\begin{equation}
\norm{J_k(t,x)}_{V}\leq \norm{J_k(t,x)-J_k(t,x_0)}+\norm{x_0}_{V}\leq \norm{x-x_0}_V+\norm{x_0}_{V}\leq \norm{x}_{V}+2\norm{x_0}_{V}.
\end{equation}
\end{remark}

\begin{remark}
Let 
\[
F(t,x)=B(t)x,\qquad t\in[0,T],\, x\in {\rm D}(B(t)),
\]
where $B(t):{\rm D}(B(t))\subseteq V\rightarrow V$ is a linear m-dissipative operator. Then by Remark \ref{DisEN} $(0,+\infty)\subseteq\rho(B)$ and \eqref{defyod} reads as
\begin{equation}\label{L-yosida}
F_k(t,x)=B_k(t)x=kB(t)R\left(k,B(t)\right)x,\qquad t\in[0,T],\, x\in V,\, k\in\N.
\end{equation}
\end{remark}

\subsection{Abstract evolution problems}\label{sect: absevolproblem}

In this section we summarize results on abstract non-autonomous problems.

Let $(V,\norm{\,\cdot\,}_V)$ be a real Banach space and let $T>0$.  Given $s\in [0,T)$, we consider the abstract evolution problem

\begin{align} \label{probastratto}
\begin{cases}
u'(t)=A(t)u(t)+F(t,u(t))\ \ t\in(s,T], \\
u(s)=x\in {\rm D}_F,
\end{cases}
\end{align}
where $\{A(t)\}_{t\in [0,T]}$ is a family of linear closed operators $A(t):{\rm D}(A(t))\subseteq V\rightarrow V$ and $F:[0,T]\times{\rm D}_F\subseteq [0,T]\times V\longrightarrow V$ is a continuous function. We recall that ${\rm D}(A(t))$ is a Banach space endowed with the graph norm of $A(t)$, for every $t\in [0,T]$.

We set 
\begin{align*}
C([a,b];{\rm D}(A(t)))&=\{g\in C([a,b];V)\ |\ t\longmapsto A(t)g(t)\in C([a,b];V)\},\\ 
C^k([a,b];{\rm D}(A(t)))&=\{g\in C^k([a,b];V)\ |\ t\longmapsto A(t)g(t)\in C^k([a,b];V)\},\ \ k\in\N.
\end{align*}

\begin{defn} $ $
\begin{enumerate}[{\rm (i)}]
\item A strict solution of \eqref{probastratto} is a function $u\in C^1([s,T];V)\cap C([s,T]; {\rm D}(A(t)))$ such that $u(s)=x\in D(A(s))\cap {\rm D}_F$, $u(t)\in {\rm D}(A(t))\cap {\rm D}_F$ and $u'(t)=A(t)u(t)+F(t,u(t))$ for all $t\in [s,T]$.
\item A classical solution of \eqref{probastratto} is a function $u\in C([s,T];V)\cap C^1((s,T];V)\cap C((s,T]; {\rm D}(A(t)))$ such that $u(s)=x\in V\cap {\rm D}_F$, $u(t)\in {\rm D}(A(t))\cap {\rm D}_F$ and $u'(t)=A(t)u(t)+F(t,u(t))$ for all $t\in (s,T)$.
\item A strong solution of \eqref{probastratto} is a function $u\in C([s,T],V)$ such that there exists $\{u_n\}_{n\in\N}\subseteq C^1([s,T];V)\cap C([s,T]; {\rm D}(A(t)))$, $u(t)\in{\rm D}_F$ for all $t\in [s,T]$ and
\begin{align*}
&u_n\longrightarrow u,\ \ \mbox{in}\ u\in C([s,T],V),\\
&u_n(s)\longrightarrow x,\ \ \mbox{in}\ V,\\
&u'_n(\cdot)-A(\cdot)u_n(\cdot)\longrightarrow F(\cdot,u(\cdot)),\ \ \mbox{in}\ u\in C([s,T],V).
\end{align*}
\end{enumerate}
\end{defn}

In the autonomous case, namely $A(t) \equiv A$ for all $t \in [s,T]$, semigroup theory is one of the most important tools in the literature for studying evolutionary PDEs such as \eqref{probastratto}. Instead, in the non-autonomous case, we can exploit the theory of evolution operators.

\begin{defn} 
$\{U(t,s)\}_{(s,t)\in\overline{\Delta}}\subseteq\mathscr L(V)$ is a (forward) evolution operator if
\begin{enumerate}[{\rm (i)}]
\item $U(t,t)=I$ for any $t\in [0,T]$,
\item $U(t,s)=U(t,r)U(r,s)$ for $s,r,t\in [0,T]$ with $s\leq r\leq t$.
\end{enumerate}
Moreover we say that
\begin{itemize}
\item $\{U(t,s)\}_{(s,t)\in\overline{\Delta}}$ is  strongly continuous if for every $x\in V$ the map
\begin{equation}
(s,t)\in\overline{\Delta}\longmapsto U(t,s)x\in V,
\end{equation}
is continuous;
\item $\{U(t,s)\}_{(s,t)\in\overline{\Delta}}$ is  uniformly continuous if for every $x\in V$ the map
\begin{equation}
(s,t)\in\overline{\Delta}\longmapsto U(t,s)\in\mathscr L(V),
\end{equation}
is continuous.
\end{itemize}
\end{defn}
From here on, whenever there is no risk of ambiguity, we shall simply denote $U(t,s)$ in place of $\{U(t,s)\}_{(s,t)\in\overline{\Delta}}$. Many authors gave suitable conditions on the operators $A(t)$ to prove the existence of an evolution operator $U(t,s)$ such that for suitable $x\in V$ (tipically $x\in D(A(s))$ or $x\in \overline{D(A(s))}$ and $A(s)x\in \overline{D(A(s))}$) $u(t):=U(t,s) x$ is a classical or strict solution of 

\begin{align} \label{probastrattolin}
\begin{cases}
u'(t)=A(t)u(t)\ \ t\in (s,T], \\
u(s)=x\in V.
\end{cases}
\end{align} 

In such cases, we say that $\{U(t,s)\}_{(s,t)\in\overline{\Delta}}$ is associated to $\{A(t)\}_{t\in [0,T]}$. We refer to \cite{acq_1988, acq_ter_1987, tan_1979, yag_1976, sob_1961,paz_1983} for many results in this direction. However, the theory of evolution operators is not consistent and complete as the semigroup theory; for instance, there are no analogous versions of the Hille-Yosida theorem and there are no connections between the spectra of the operators $\{A(t)\}_{t\in [0,T]}$ and the asymptotic behavior of $\{U(t,s)\}_{(s,t)\in\overline{\Delta}}$.

\begin{defn}
A mild solution of \eqref{probastratto} is a function $u:[s,T]\longrightarrow V$ such that $u(t)\in{\rm D}_F$  for all $t\in [s,T]$, $F(\cdot,u(\cdot))\in L^1((s,T);V)$ and 
\begin{align}
u(t)=U(t,s)x+\int_s^tU(t,r)F(r,u(r))dr,\qquad t\in [s,T]
\end{align}
\end{defn}

In the next paragraphs, we list some results that will be useful later on.

\subsubsection{The continuous case} 
\begin{hypothesis}\label{cont_case}
 We assume that for every $t\in [0,T]$ the operator $A(t)$ is bounded and that the function
 \[
 t\in [0,T]\longmapsto A(t)\in\mathscr{L}(V)
 \]
 is continuous.
\end{hypothesis}
We have the following result.
\begin{thm}\label{cont_case_thm}
Assuming Hypothesis \ref{cont_case} holds true. Then for all $s\in [0,T)$ and $x\in V$ the equation \eqref{probastrattolin} has a unique strict solution $u_{s,x}$. Setting $U(t,s)x:=u_{s,x}(t)$ for all $(s,t)\in\overline{\Delta}$ and $x\in V$, then $\{U(t,s)\}_{(s,t)\in\overline{\Delta}}$ is a uniformly continuous evolution operator on $V$ and for all $(s,t)\in\overline{\Delta}$ we have
\begin{align}
&\norm{U(t,s)}_{\mathscr L(V)}\leq  e^{\int_s^t\norm{A(r)}_{\mathscr L(V)}dr},\\
&\frac{\partial}{\partial t}U(t,s)=A(t)U(t,s),\\
&\frac{\partial}{\partial s}U(t,s)=-U(t,s)A(s).
\end{align}
\end{thm}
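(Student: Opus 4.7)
The plan is to reduce the linear problem \eqref{probastrattolin} to a Volterra integral equation, solve it by Picard iteration, and then read off all the properties of $U(t,s)$ from the solution using uniqueness and the composition rule. Fix $s\in[0,T)$ and $x\in V$, and set $M:=\sup_{r\in[0,T]}\norm{A(r)}_{\mathscr L(V)}$, which is finite by Hypothesis \ref{cont_case}. A continuous $u:[s,T]\to V$ is a strict solution of \eqref{probastrattolin} if and only if $u(t)=x+\int_s^tA(r)u(r)\,dr$ for every $t\in[s,T]$. I would produce such a $u$ via the Picard iterates $u_0(t):=x$, $u_{n+1}(t):=x+\int_s^tA(r)u_n(r)\,dr$; a routine induction gives $\norm{u_{n+1}(t)-u_n(t)}_V\leq\norm{x}_V(M(t-s))^{n+1}/(n+1)!$, so the iterates converge uniformly on $[s,T]$ to a continuous $u_{s,x}$. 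Passing to the limit in the integral equation yields $u_{s,x}\in C^1([s,T];V)$ with $u'_{s,x}=A(\cdot)u_{s,x}\in C([s,T];V)$, hence a strict solution. Uniqueness of strict solutions and the bound
\[
\norm{u_{s,x}(t)}_V\leq\norm{x}_V\exp\Bigl(\int_s^t\norm{A(r)}_{\mathscr L(V)}\,dr\Bigr)
\]
then both follow from the integral equation by Gronwall's inequality.

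Set $U(t,s)x:=u_{s,x}(t)$. Uniqueness forces linearity of $U(t,s):V\to V$, and the bound above upgrades to the claimed operator-norm estimate. The identity $U(t,t)=I$ is immediate, and for the composition rule $U(t,s)=U(t,r)U(r,s)$ with $s\leq r\leq t$ I would simply note that the maps $\tau\mapsto U(\tau,s)x$ and $\tau\mapsto U(\tau,r)U(r,s)x$ are both strict solutions of \eqref{probastrattolin} on $[r,T]$ with initial datum $U(r,s)x$ at $\tau=r$, and hence coincide at $\tau=t$ by uniqueness.

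Next I would establish uniform continuity of $(s,t)\mapsto U(t,s)\in\mathscr L(V)$. For fixed $s_0$ and $s_0\leq t_1\leq t_2\leq T$, the integral equation applied to an arbitrary $y\in V$ with $\norm{y}_V\leq 1$ gives $\norm{(U(t_2,s_0)-U(t_1,s_0))y}_V\leq Me^{MT}(t_2-t_1)$, so $t\mapsto U(t,s_0)$ is norm-continuous. Combining this with the composition rule, for $s\geq s_0$ one has $U(t,s)-U(t,s_0)=U(t,s)[I-U(s,s_0)]$, and $\norm{I-U(s,s_0)}_{\mathscr L(V)}\to 0$ as $s\to s_0^+$ by the same integral equation argument applied on the short interval $[s_0,s]$. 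The case $s<s_0$ is handled symmetrically via $U(t,s)-U(t,s_0)=U(t,s_0)[U(s_0,s)-I]$. Together these give joint norm-continuity at any $(s_0,t_0)\in\overline\Delta$. The identity $\partial_t U(t,s)=A(t)U(t,s)$ then follows from the fundamental theorem of calculus applied to the operator-valued integral equation $U(t,s)=I+\int_s^tA(r)U(r,s)\,dr$, which now holds in $\mathscr L(V)$ because the integrand is norm-continuous in $r$.

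The main technical step is the identification of the backward derivative $\partial_s U(t,s)=-U(t,s)A(s)$. The plan is to exploit the composition rule: for small $h>0$ with $s+h\leq t$,
\[
\frac{U(t,s+h)-U(t,s)}{h}=-U(t,s+h)\cdot\frac{U(s+h,s)-I}{h},
\]
where $(U(s+h,s)-I)/h\to A(s)$ in $\mathscr L(V)$ (integral equation plus continuity of $A$ and of $r\mapsto U(r,s)$), while $U(t,s+h)\to U(t,s)$ in $\mathscr L(V)$ by the uniform continuity just proved; this yields the right derivative, and the analogous manipulation based on $U(t,s-h)=U(t,s)U(s,s-h)$ gives the left derivative. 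Throughout, the main obstacle is the jump from pointwise identities (such as the integral equation in $V$) to operator-norm statements, which is the only place where the norm-continuity of $(s,t)\mapsto U(t,s)$ is essential; once that is in place, every remaining property reduces to the composition rule and the single bound $\norm{A(\cdot)}_{\mathscr L(V)}\leq M$.
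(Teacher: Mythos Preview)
Your argument is correct and is precisely the standard construction: the paper does not give its own proof of this theorem but simply cites \cite[Theorems 5.1 and 5.2]{paz_1983}, where the result is established by the same Picard iteration on the Volterra integral equation, Gronwall for uniqueness and the exponential bound, and the composition rule plus difference quotients for the $s$-derivative. There is nothing to add.
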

\begin{proof}
See \cite[Theorem 5.1, Theorem 5.2]{paz_1983}.
\end{proof}

\subsubsection{Abstract parabolic problem}
We state here the hypotheses of Acquistapace-Terreni, who constructed an evolution operator for \eqref{probastrattolin} for abstract parabolic equations in a very general setting. In such a case, each $A(t)$ is the generator of an analytic semigroup. We refer to \cite{acq_1988,acq_ter_1987,sch_2004}.

\begin{hypothesis}\label{ATipotesi}
Let $T>0$. For every $t\in [0,T]$, $A(t): {\rm D}(A(t))\subseteq V\longrightarrow V$ is a linear operator such that:
\begin{itemize}
\item{\rm\textbf{AT1}} there exist $M\geq 0$, $\omega\in \R$ and $\displaystyle{\phi\in \left(\frac{\pi}{2},\pi\right)}$ such that $$\rho(A(t))\supseteq S_{\omega,\phi}:=\{\omega\}\cup\left\{\lambda\in\mathbb{C}\smallsetminus\{\omega\}:\ \abs{{\rm arg}(\lambda-\omega)}<\phi\right\},$$ and $$\norm{R(\lambda, A(t))}_{\mathscr{L}(V)}\leq\frac{M}{1+\abs{\lambda-\omega}},\ \ \lambda\in S_{\omega,\phi},\ t\in [0,T];$$
\item{\rm\textbf{AT2}} there exist $L\geq 0$, $\mu,\nu\in(0,1]$ such that $\mu+\nu>1$ and
$$\abs{\lambda-\omega}^\nu\norm{A_\omega(t)R(\lambda, A_\omega(t))(A_\omega(t)^{-1}-A_\omega(s)^{-1})}_{\mathscr{L}(V)}\leq L\abs{t-s}^\mu,\ \ \abs{{\rm arg}(\lambda-\omega)}\leq \phi,\ (s,t)\in\Delta$$ where $A_\omega(t):=A(t)-\omega$, $t\in [0,T]$. 
\end{itemize}
\end{hypothesis}

\begin{thm}\label{thm23paolo} 
 Assume that $\{A(t)\}_{t\in [0,T]}$ satisfies Hypothesis \ref{ATipotesi}. Then there exists a unique (forward) evolution operator ${U(t,s)}_{(s,t)\in\overline{\Delta}}$ such that 
\begin{enumerate}[{\rm(i)}]
\item the mapping $(s,t)\longmapsto U(t,s)$ belongs to $B(\overline{\Delta};\mathscr{L}(V))\cap C(\Delta;\mathscr{L}(V))$;
\item $\displaystyle{\lim_{s\nearrow t}\norm{U(t,s) x-x}_X=0}$ if and only if $x\in\overline{{\rm D}(A(t))}$;
\item $\displaystyle{\lim_{s\searrow t}\norm{U(t,s) x-x}_X=0}$ if and only if $x\in\overline{D(A(s))}$;
\item $U(t,s)$ maps $V$ into ${\rm D}(A(t))$ for every $(s,t)\in\Delta$ and there exists $C_1>0$ such that
\begin{equation}
\norm{U(t,s)}_{\mathscr{L}(V;{\rm D}(A(t))}=\norm{U(t,s)}_{\mathscr{L}(V)}+\norm{A(t)U(t,s)}_{\mathscr{L}(V)}\leq \frac{C_1}{t-s};
\end{equation}
\item given $s\in [0,T)$, the mapping $t\longrightarrow U(t,s)$ belongs to $C^1((s,T];\mathscr{L}(V))\cap C((s,T];\mathscr{L}(V;{\rm D}(A(t))))$ and 
\begin{equation}
\frac{\partial}{\partial t}U(t,s)=A(t)U(t,s);
\end{equation}
\item $U(t,s)$ maps $D(A(s))$ in ${\rm D}(A(t))$ for all $(s,t)\in\Delta$ and there exists $C_2>0$ such that
\begin{equation}
\norm{A(t)U(t,s)(\omega I- A(s))^{-1}}_{\mathscr{L}(V)}\leq C_2;
\end{equation}
\item the mapping $(s,t)\longrightarrow A(t)U(t,s)(\omega I- A(s))^{-1}$ belongs to $B(\overline{\Delta};\mathscr{L}(V))\cap C(\Delta;\mathscr{L}(V))$;
\item given $t\in [0,T]$, the mapping $s\in (0,t)\longrightarrow U(t,s)x$ with $x\in D(A(s))$ is right differentiable and
\begin{equation}
\frac{\partial^+}{\partial s}U(t,s)x=-U(t,s)A(s)x,\ \ x\in D(A(s)).
\end{equation}
\end{enumerate}
\end{thm}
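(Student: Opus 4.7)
The plan is to follow the classical Acquistapace--Terreni construction, based on a parametrix method. First, \textbf{AT1} gives that for each fixed $\tau \in [0,T]$ the operator $A(\tau)$ generates an analytic semigroup $\{e^{\sigma A(\tau)}\}_{\sigma \geq 0}$ on $V$; via the Dunford integral $e^{\sigma A(\tau)} = (2\pi i)^{-1}\int_\gamma e^{\sigma \lambda} R(\lambda,A(\tau))d\lambda$ over a suitable contour in $S_{\omega,\phi}$, one obtains the standard bounds $\|e^{\sigma A(\tau)}\|_{\mathscr L(V)} \le M' e^{\omega \sigma}$ and $\|A(\tau)^k e^{\sigma A(\tau)}\|_{\mathscr L(V)} \le C_k \sigma^{-k} e^{\omega \sigma}$ for $\sigma \in (0,T]$, uniformly in $\tau$. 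These will be the atomic estimates used throughout.

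Next, introduce the parametrix $G(t,s) := e^{(t-s)A(s)}$ for $(s,t) \in \overline{\Delta}$. It satisfies $\partial_t G(t,s) = A(s)G(t,s)$, so its defect with respect to \eqref{probastrattolin} is
\begin{equation*}
R(t,s) := A(t)G(t,s) - A(s)G(t,s) = \bigl[A(t) - A(s)\bigr]e^{(t-s)A(s)}.
\end{equation*}
Using \textbf{AT2} combined with $A(t)e^{(t-s)A(s)} = A(t)A(s)^{-1} \cdot A(s)e^{(t-s)A(s)}$ and the analyticity bounds, one shows $\|R(t,s)\|_{\mathscr L(V)} \le C (t-s)^{\mu+\nu-2}$, which is integrable near $s=t$ precisely because $\mu+\nu > 1$. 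Then set up the Volterra equation $\Phi(t,s) = R(t,s) + \int_s^t R(t,r)\Phi(r,s)\,dr$ and solve it by the Neumann series $\Phi = \sum_{n\ge 1} R^{*n}$, whose convergence follows from standard estimates on iterated kernels of weakly singular Volterra operators (the exponent $\mu+\nu-1$ improves by $\mu+\nu-1$ at each iteration). Finally define
\begin{equation*}
U(t,s) := G(t,s) + \int_s^t G(t,r)\Phi(r,s)\,dr, \qquad (s,t)\in\overline{\Delta}.
\end{equation*}

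The verification of properties (i)--(viii) then proceeds as follows. Property (i) (boundedness on $\overline{\Delta}$, continuity on $\Delta$) comes from the bounds on $G$ and $\Phi$. Properties (ii)--(iii) on one-sided strong limits at $s=t$ reduce to the corresponding statement for the analytic semigroups $e^{\sigma A(\tau)}$, which are strongly continuous at $0$ precisely on $\overline{{\rm D}(A(\tau))}$. Property (v) and the estimate in (iv) follow by differentiating $U(t,s)$ in $t$ under the integral (using the analytic semigroup smoothing $\|A(t)e^{\sigma A(t)}\|\lesssim \sigma^{-1}$) and using the Volterra identity to match $\partial_t U = A(t)U$. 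Properties (vi)--(vii) are obtained by writing $A(t)U(t,s)(\omega I - A(s))^{-1} = A(t)G(t,s)A_\omega(s)^{-1} + \int_s^t A(t)G(t,r)\Phi(r,s)A_\omega(s)^{-1}dr$ and using \textbf{AT2} (the Hölder-type condition on $A_\omega(t)R(\lambda,A_\omega(t))(A_\omega(t)^{-1}-A_\omega(s)^{-1})$) to absorb the singularity at $r=t$. Property (viii) comes from differentiating $U(t,s)x$ in $s$ on ${\rm D}(A(s))$ after suitably re-expressing $U(t,s)$ via the adjoint parametrix $e^{(t-s)A(t)}$.

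The main technical obstacle, as usual in this theory, is the careful handling of the weakly singular kernel $R(t,s)$ and its iterates: one must track the precise power of $(t-s)$ that appears in each convolution in order to establish absolute convergence of the Neumann series in $\mathscr L(V)$ and uniform bounds on $\overline{\Delta}$. The second delicate point is proving (vi)--(vii), where \textbf{AT2} is used in its full strength to compensate for the loss of one derivative when passing from $A(s)$ to $A(t)$ inside the semigroup; without the bound $\mu+\nu>1$, neither the Volterra iteration nor the uniform estimate on $A(t)U(t,s)(\omega I - A(s))^{-1}$ would close. All the remaining claims are essentially bookkeeping on top of these two computations, and we refer to \cite{acq_1988,acq_ter_1987} for the detailed proof.
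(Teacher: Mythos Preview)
Your proposal is correct and matches the paper's approach: the paper's own proof is simply the one-line reference ``See \cite[Theorem 2.3]{acq_1988}'', and what you have written is precisely a sketch of the Acquistapace--Terreni parametrix construction carried out in that reference (and in \cite{acq_ter_1987}). Your outline of the Volterra iteration, the role of the condition $\mu+\nu>1$, and the use of \textbf{AT2} for (vi)--(vii) are all faithful to the original argument, so there is nothing to add beyond the citation you already give at the end.
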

\begin{proof}
See \cite[Theorem 2.3]{acq_1988}.
\end{proof}
Let $n\in\N$ and let $A_n(t)=nA(t)R(n,A(t))\in\mathscr L(V)$ be the Yosida approximation of A(t) for all $t\in [0,T]$. We have the following result.
\begin{proposition}
Assume that $\{A(t)\}_{t\in [0,T]}$ satisfies Hypothesis \ref{ATipotesi}. Then the mapping $t\longmapsto A(t)$ is continuous and for all $n\in\N$ there exists an evolution operator $\{U_n(t,s)\}_{(s,t)\in\overline{\Delta}}$ associated to $\{A_n(t)\}_{t\in [0,T]}$ such that for all $(s,t)\in\overline{\Delta}$ we have
\begin{align}
&U_n(t,s)\longrightarrow U(t,s),\ \ \mbox{in}\ \mathscr L(V)\ \mbox{as}\ n\to+\infty,\\
&A_n(t)U_n(t,s)\longrightarrow\frac{\partial}{\partial t} U(t,s),\ \ \mbox{in}\ \mathscr L(V)\ \mbox{as}\ n\to+\infty.
\end{align}
\end{proposition}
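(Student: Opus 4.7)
The plan is to proceed in three stages: continuity of $t\mapsto A_n(t)$, construction of $U_n$ with uniform bounds, and passage to the limit.

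For the first stage (read as $t\mapsto A_n(t)$ continuous in $\mathscr L(V)$, since $A(t)$ is unbounded in general), recall that $A_n(t)=nA(t)R(n,A(t))=n^2R(n,A(t))-nI$ for $n>\omega$, which lies in $\mathscr L(V)$ by AT1. A standard consequence of AT1 and AT2, obtained by combining them through the resolvent identity, is that $t\mapsto R(\lambda,A(t))$ is H\"older continuous in $\mathscr L(V)$ for every fixed $\lambda$ in the sector $S_{\omega,\phi}$. In particular $t\mapsto A_n(t)$ is continuous for each $n>\omega$. Theorem \ref{cont_case_thm} then provides a unique uniformly continuous evolution operator $U_n(t,s)$ associated to $\{A_n(t)\}$, satisfying $\frac{\partial}{\partial t}U_n(t,s)=A_n(t)U_n(t,s)$.

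The main obstacle is obtaining bounds on $U_n$ uniform in $n$. The estimate $\|U_n(t,s)\|_{\mathscr L(V)}\le\exp\!\int_s^t\|A_n(r)\|_{\mathscr L(V)}\,dr$ coming from Theorem \ref{cont_case_thm} blows up since $\|A_n(r)\|_{\mathscr L(V)}=O(n)$. The strategy is to verify that $\{A_n(t)\}_{t\in[0,T]}$ itself satisfies Hypotheses AT1 and AT2 with constants independent of $n$: sectoriality of $A_n(t)$ with essentially the original sector $S_{\omega,\phi}$ is a classical property of Yosida approximations of sectorial operators, and the uniform H\"older estimate in AT2 is inherited from that of $A$ through direct manipulation of the resolvent identities. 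Under these uniform AT assumptions, Theorem \ref{thm23paolo} applied to $\{A_n(t)\}$ yields the bounds $\|U_n(t,s)\|_{\mathscr L(V)}\le C$ and $\|A_n(t)U_n(t,s)\|_{\mathscr L(V)}\le C/(t-s)$ with $C$ independent of $n$.

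For the convergences, I would employ the variation-of-constants identity
$$U_n(t,s)x-U(t,s)x=\int_s^t U(t,r)\bigl[A_n(r)-A(r)\bigr]U_n(r,s)x\,dr,$$
obtained by differentiating $r\mapsto U(t,r)U_n(r,s)x$ via Theorem \ref{thm23paolo}(v) and (viii). The differentiation requires $U_n(r,s)x\in {\rm D}(A(r))$ so that $A(r)U_n(r,s)x$ makes sense; one therefore first proves the identity for $x$ in a dense regular subspace and extends by density using the uniform bounds. The pointwise Yosida convergence $A_n(r)y\to A(r)y$ for $y\in {\rm D}(A(r))$, together with these uniform bounds and dominated convergence, then yields $U_n(t,s)\to U(t,s)$ in $\mathscr L(V)$. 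For the derivative convergence, factor $A_n(t)U_n(t,s)=[A_n(t)U_n(t,\tau)]\,U_n(\tau,s)$ at $\tau=(s+t)/2$; the second factor converges to $U(\tau,s)$ by what has been shown, while the first converges to $A(t)U(t,\tau)=\frac{\partial}{\partial t}U(t,\tau)$ by a similar limit argument that now exploits $U(t,\tau)V\subseteq {\rm D}(A(t))$ so that the unbounded operator $A(t)$ is well controlled on the range of $U(t,\tau)$.
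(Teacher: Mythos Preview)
The paper does not prove this proposition; it simply cites \cite[Theorem 2.3]{acq_1988} and \cite[Proposition 4.5]{acq_ter_1987}. So your sketch must be judged on its own merits, and compared with what those references actually do.

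Two concrete problems with your third stage.

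\emph{First, the variation-of-constants identity is written in the wrong order.} You differentiate $r\mapsto U(t,r)U_n(r,s)x$, which forces $U_n(r,s)x\in{\rm D}(A(r))$. Your proposed fix---take $x$ in a ``dense regular subspace''---does not work here: $U_n$ is generated by the bounded operators $A_n(r)$ and there is no mechanism by which it preserves ${\rm D}(A(r))$, especially with variable domains. The correct identity comes from differentiating $r\mapsto U_n(t,r)U(r,s)x$, using $\partial_r U_n(t,r)=-U_n(t,r)A_n(r)$ (Theorem~\ref{cont_case_thm}) and $\partial_r U(r,s)=A(r)U(r,s)$ for $r>s$ (Theorem~\ref{thm23paolo}(v)). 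This yields
\[
U_n(t,s)-U(t,s)=\int_s^t U_n(t,r)\bigl[A_n(r)-A(r)\bigr]U(r,s)\,dr,
\]
and now the smoothing $U(r,s)V\subseteq{\rm D}(A(r))$ resolves the domain issue for free.

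\emph{Second, your limit argument yields only strong convergence, not convergence in $\mathscr L(V)$.} You invoke the pointwise Yosida convergence $A_n(r)y\to A(r)y$ for $y\in{\rm D}(A(r))$ together with dominated convergence; that produces $U_n(t,s)x\to U(t,s)x$ for each $x$, not $\|U_n(t,s)-U(t,s)\|_{\mathscr L(V)}\to 0$. Writing $[A_n(r)-A(r)]U(r,s)=A(r)R(n,A(r))A(r)U(r,s)$ one sees that $\|A(r)R(n,A(r))\|$ does not tend to zero, so no naive uniform estimate is available. The cited references obtain operator-norm convergence by a different route: Acquistapace constructs $U(t,s)$ via a parametrix $U(t,s)=e^{(t-s)A(s)}+\int_s^t e^{(t-\sigma)A(\sigma)}W(\sigma,s)\,d\sigma$ with $W$ solving an explicit Volterra equation, builds $U_n$ in the same way from $A_n$, and shows that every ingredient of the construction converges in $\mathscr L(V)$ uniformly in $n$ thanks to the uniform AT bounds. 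Your Stage~2 (uniform AT constants for $A_n$) is exactly the right input, but it feeds into this construction-level argument rather than into a variation-of-constants comparison.
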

\begin{proof}
See \cite[Theorem 2.3]{acq_1988} and \cite[Proposition 4.5]{acq_ter_1987}.
\end{proof}

\section{Well-posedness of the deterministic problem \eqref{NAPP}}\label{sect: WD}
Let $T>0$ and let $(H,\norm{\,\cdot\,}_H,\ps{\cdot}{\cdot}_H)$ be a separable Hilbert space. 
The following assumptions define the coefficients of the abstract PDE that we will study in this section.

\begin{hypothesis}\label{EU2} $ $
\begin{enumerate}[{\rm(i)}]
\item Let $(E,\norm{\,\cdot\,}_E)$ be a separable Banach space densely and continuously embedded in $H$.

\item\label{EU2.2} Let $F:[0,T]\times {\rm D}_F\subseteq [0,T]\times H\rightarrow H$ be a function such that $E\subseteq {\rm D}_F$ and
\begin{enumerate}[{\rm(a)}]
    \item  for every $t\in [0,T]$ the function  $F(t,\cdot)$ maps bounded sets of $E$ into bounded sets of $H$, 
    \item for every $x\in E$ the function $t\in[0,T]\rightarrow F_E(t,x)\in E$ is continuous,
    \item there exists $\zeta\in\R$ such that $F(t,\cdot)-\zeta I$ is $m$-dissipative in $H$ and $F_{E}(t,\cdot)-\zeta I$ is $m$-dissipative in $E$ for all $t\in[0,T]$.
\end{enumerate}

\item\label{EU2.3} Let $\{A(t)\}_{t\in [0,T]}$ be a family linear closed operator $A(t):{\rm D}(A(t))\subseteq H\rightarrow H$ such that $A(t)$ is $m$-dissipative in $H$ and $A_E(t)$ is $m$-dissipative in $E$ for all $t\in [0,T]$.

\item\label{EU2.5} There exists a strongly continuous evolution operator $\{U(t,s)\}_{(s,t)\in\overline{\Delta}}$ on $H$ such that:
\begin{enumerate}[{\rm(a)}]
    \item $U(\cdot,\cdot)_{|E}\in C(\Delta; E)$,
    \item  the function $t\in[0,T]\rightarrow A_n(t):=nA(t)R(n,A(t))\in\mathscr{L}(E)$ is continuous for all $n\in\N$,
    \item for all $(s,t)\in\overline{\Delta}$ 
\begin{align}\label{yosidaE}
\lim_{n\rightarrow+\infty}\norm{U_n(t,s)-U(t,s)}_{\mathscr{L}(E)}=0
\end{align}
where, for all $n>\alpha$, $\{U_n(t,s)\}_{(s,t)\in\overline{\Delta}}$ is the evolution operator associated to $\{A_n(t)\}_{t\in[0,T]}$ in the sense of Theorem \ref{cont_case_thm}.
\end{enumerate} 
\item For every continuous function $f:[0,T]\rightarrow E$ the function $t\in[0,T]\rightarrow F_E(t,f(t))\in E$ is measurable and
\[
\int^t_s\norm{F_E(r,f(r))}_Eds<+\infty,\qquad (s,t)\in\Delta,\quad \mathbb{P}{\rm -a.s.}
\]

\end{enumerate}
\end{hypothesis}

Let $s\in [0,T)$ and let $f:[s,T]{\longrightarrow} E$ be a continuous function. We will study the following PDE
\begin{align}\label{eq}
\begin{cases}
\dfrac{dy}{dt}(t)=A(t)y(t)+F(t,y(t)+f(t)),\ \  t\in(s,T],\\
y(s)=x\in E.
\end{cases}
\end{align}

\begin{remark}
We note that Hypotheses \ref{EU2}(\ref{EU2.2}-c) and \ref{EU2}\eqref{EU2.3} are equivalent to the following:
\begin{enumerate}[\rm(a)]
\item there exists $\alpha\in\R$ such that $A(t)-\alpha I$ is dissipative in $H$ and $A_E(t)-\alpha I$ is dissipative in $E$ for all $t\in[0,T]$;
\item  there exists $\eta\in\R$ such that $F(t,\cdot)-\eta I$ is dissipative in $H$ and $F_{E}(t,\cdot)-\eta I$ is dissipative in $E$ for all $t\in[0,T]$.
\end{enumerate}
In this case, in \eqref{eq} we replace $A$ and $F$ with $\widetilde{A}=A-\alpha I$ and $\widetilde{F}=F+\alpha I$, respectively. In this way $\widetilde{A}$ and $\widetilde{F}$ verify Hypothesis \ref{EU2} with $\zeta=\alpha+\eta$.
\end{remark}

In this section we are interested about existence and uniqueness of a mild solution to \eqref{eq}.
\begin{defn}
For every $T>0$, $s\in [0,T)$ and $x\in E$ we call mild solution of \eqref{eq} every continuous function $y_{s,x}:[s,T]\rightarrow E$ such that, for every $t\in [s,T]$, we have
\begin{align*}
y_{s,x}(t)=U(t,s)x+\int_s^tU(t,r)F(r,y_{s,x}(r)+f(r))dr.
\end{align*}
\end{defn}
To be more precise, we will prove that for every $T>0$, $s\in [0,T)$ and $x\in E$, \eqref{eq} has a unique mild solution $y_{s,x}\in C([s,T],H)\cap C((s,T],E)$ under one of the following two assumptions.

\begin{hypothesis}\label{Hyp1}
Let Hypothesis \ref{EU2} holds true. We assume that $E$ is reflexive and that there exists $x_0\in E$ such that $F(t,x_0)=\zeta x_0$ for all $t\in[0,T]$. Moreover, for every bounded sequence $\{y_n\}_{n\in\N}\subseteq E$ and $y\in E$ such that $y_n \longrightarrow y$ in $H$ we have
\begin{equation}\label{convergenza-debole}
|\ps{F(t,y_n)-F(t,y)}{z}_{H}|\longrightarrow 0,\ \ \mbox{as}\ n\to\infty,\ \forall t\in[0,T],\ \forall z\in E.
\end{equation}
\end{hypothesis}

\begin{hypothesis}\label{Hyp2}
Let Hypothesis \ref{EU2} holds true. We assume that $F(t,\cdot) $ maps $E$ into itself for all $t\in[0,T]$ and that $F(t,\cdot)$ is locally Lipschitz in $E$ uniformly in $t\in[0,T]$, namely for every bounded subset $B$ of $E$ there exists $L_B>0$ such that 
\begin{equation*}
\norm{F(t,z_1)-F(t,z_2)}_E\leq L_B\norm{z_1-z_2}_E,\ \ \forall t\in[0,T],\ \forall z_1,z_2\in B.
\end{equation*}
\end{hypothesis}

\subsection{Approximating problems}
In this subsection we approximate the PDE \eqref{eq} via the Yosida approximations of $\{A(t)\}_{t\in[0,T]}$ and $F$. We still denote by $A$ and $F$ the part of $A$ and $F$ in $E$, respectively. Let $\{F_k(t,\cdot)\}_{k> \zeta}$ and $\{A_n(t)\}_{n\in\N}$ be the Yosida approximations of $F(t,\cdot)$ and $A(t)$, respectively. By Hypotheses \ref{EU2}, the assumptions of Proposition \ref{Lemma_YO} are verified with both $V=H$ and $V=E$. Before studying the approximate problem, we need a technical lemma on the evolution operator $\{U_n(t,s)\}_{(s,t)\in\overline{\Delta}}$ associated with $\{A_n(t)\}_{t\in[0,T]}$.

\begin{lemma}
Assume Hypothesis \ref{EU2} holds true. Then for every $(s,t)\in\overline{\Delta}$ we have
\begin{align}
\max\{\norm{U_n(t,s)}_{\mathscr{L}(H)},\norm{U_n(t,s)}_{\mathscr{L}(E)}\}&\leq 1,\qquad n\in\N,\label{contrazione-n}\\
\max\{\norm{U(t,s)}_{\mathscr{L}(H)},\norm{U(t,s)}_{\mathscr{L}(E)}\}&\leq 1\label{contrazione}.
\end{align}
\end{lemma}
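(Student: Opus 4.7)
The plan is to reduce the whole statement to dissipativity of the Yosida approximations $A_n(t)$, then pass to the limit $n\to\infty$ for the bound on $U(t,s)$.

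\medskip
\noindent\textbf{Step 1 (dissipativity of $A_n(t)$).} By Hypothesis \ref{EU2}\eqref{EU2.3}, the operator $A(t)$ is m-dissipative in both $H$ and $E$. Applying the remark following Lemma \ref{Lemma_YO} (with $\zeta=0$ and $B(t)=A(t)$, giving \eqref{L-yosida}), I obtain that $A_n(t)=nA(t)R(n,A(t))$ is a bounded operator which is m-dissipative, hence dissipative, on each of $H$ and $E$.

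\medskip
\noindent\textbf{Step 2 (proof of \eqref{contrazione-n}).} Fix $V\in\{H,E\}$, $n\in\N$, $(s,t)\in\overline{\Delta}$ and $x\in V$. By Theorem \ref{cont_case_thm} applied in $V$, the function $u(\tau):=U_n(\tau,s)x$ belongs to $C^1([s,T];V)$ and satisfies $u'(\tau)=A_n(\tau)u(\tau)$. Setting $\gamma(\tau):=\|u(\tau)\|_V$, the function $\gamma$ is continuous and left-differentiable, and by \eqref{Ldiff-}
\[
\frac{d^-\gamma}{d\tau}(\tau)=\min\Bigl\{\dscal{A_n(\tau)u(\tau)}{x^*}{V}\;:\;x^*\in\partial\|u(\tau)\|_V\Bigr\}.
\]
The Proposition characterizing dissipativity (applied to the linear dissipative operator $A_n(\tau)$ with the pair $(u(\tau),0)$) provides some $z^*\in\partial\|u(\tau)\|_V$ with $\dscal{A_n(\tau)u(\tau)}{z^*}{V}\le 0$, so the minimum above is non-positive. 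Therefore $\tfrac{d^-\gamma}{d\tau}(\tau)\le 0$, and invoking \eqref{varofcost} with $b=0$ and $g\equiv 0$ yields $\gamma(t)\le\gamma(s)=\|x\|_V$, which is \eqref{contrazione-n}.

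\medskip
\noindent\textbf{Step 3 (proof of \eqref{contrazione}).} For the $\mathscr{L}(E)$ estimate, I pass to the limit $n\to\infty$ in \eqref{contrazione-n} using \eqref{yosidaE}, which is Hypothesis \ref{EU2}\eqref{EU2.5}(c). For the $\mathscr{L}(H)$ estimate, for every $x\in E$ the convergence $U_n(t,s)x\to U(t,s)x$ in $E$, combined with the continuous embedding $E\hookrightarrow H$, gives the same convergence in $H$; letting $n\to\infty$ in $\|U_n(t,s)x\|_H\le\|x\|_H$ yields $\|U(t,s)x\|_H\le\|x\|_H$ for all $x\in E$. Since $E$ is dense in $H$ (Hypothesis \ref{EU2}(i)) and $U(t,s)\in\mathscr{L}(H)$ by Hypothesis \ref{EU2}\eqref{EU2.5}, a density argument concludes $\|U(t,s)\|_{\mathscr{L}(H)}\le 1$.

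\medskip
\noindent\textbf{Main obstacle.} The only slightly delicate point is justifying that linear dissipativity of $A_n(\tau)$ gives some $z^*\in\partial\|u(\tau)\|_V$ realising $\dscal{A_n(\tau)u(\tau)}{z^*}{V}\le 0$, because the Proposition in the paper is phrased for pairs $(x,y)$ and the subdifferential $\partial\|x-y\|_V$. This is resolved simply by choosing $x=u(\tau)$ and $y=0$ (possible by linearity, as $0\in\mathrm{D}(A_n(\tau))$), which reduces the expression to $\dscal{A_n(\tau)u(\tau)}{z^*}{V}\le 0$ with $z^*\in\partial\|u(\tau)\|_V$, exactly what is needed.
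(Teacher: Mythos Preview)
Your proof is correct and follows essentially the same approach as the paper: use dissipativity of $A_n(t)$ together with the left-derivative formula \eqref{Ldiff-} to get $\tfrac{d^-}{dt}\|U_n(t,s)x\|_V\le 0$, conclude contraction via \eqref{varofcost}, and pass to the limit using \eqref{yosidaE}. Your density argument in Step~3 for the $\mathscr{L}(H)$ bound on $U(t,s)$ is a legitimate (and slightly more careful) variant of the paper's ``similar calculations'', since \eqref{yosidaE} is only stated in $\mathscr{L}(E)$.
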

\begin{proof}
    We prove the statement with respect to the norm of $E$. By Theorem \ref{cont_case_thm} and Hypothesis \ref{EU2}\eqref{EU2.3}, for all $(s,t)\in\overline{\Delta}$ and $n\in\N$ there exists $z^*\in\partial\norm{U_n(t,s)x}_E$ such that
   \begin{align*}
        \frac{d^-\norm{U_n(t,s)x}_E}{dt}\leq \dscal{A_n(t)U_n(t,s)x}{z^*}{E}\leq 0,
    \end{align*}
    hence by \eqref{Ldiff-} we obtain
    \begin{align*}
 \norm{U_n(t,s)x}_E\leq 1.
    \end{align*}
  Finally by \eqref{yosidaE} we obtain the statement for $\{U(t,s)\}_{(s,t)\in\overline{\Delta}}$. The statement with respect to the norm of $H$ follows by similar calculations.
\end{proof}

In all this section we fix $T>0$, $s\in [0,T)$ and $f:[s,T]\rightarrow E$ continuous. For all $n\in\N$ and $k>\zeta$ we introduce the approximate problem
\begin{align}\label{eqdt}
\begin{cases}
\dfrac{dy^{k,n}}{dt}(t)=A_{n}(t) y^{k,n}(t)+F_{k}(t,y^{k,n}(t)+f(t)), & t\in (s,T],\\
y^{k,n}(s)=x\in E.
\end{cases}
\end{align}

\begin{proposition}\label{soldt}
Assume that Hypothesis \ref{EU2} holds true. For every $n\in\N$, $k>\zeta$ and $x\in E$ the problem \eqref{eqdt} has a unique strict solution $y_{s,x}^{k,n}$ in  $C([s,T],E)$. Moreover, for every $n\in\N$, $h,k>\zeta$, $x,z\in E$ and $t\in [s,T]$, there exists $C=C(x,\zeta,s,T)>0$ such that
\begin{align}
&\norm{y_{s,x}^{k,n}(t)}_{H}\leq e^{\zeta (t-s)}\norm{x}_{H}+3\int_s^te^{\zeta (t-r)}\left(\norm{F(r,f(r))}_{H}+\max\{0,\zeta\} \norm{f(r)}_{H}\right)dr,\label{stidXth}\\
&\norm{y_{s,x}^{k,n}(t)}_{E}\leq e^{\zeta (t-s)}\norm{x}_{E}+3\int_s^te^{\zeta (t-r)}\left(\norm{F(r,f(r))}_{E}+\max\{0,\zeta\} \norm{f(r)}_{E}\right)dr,\label{stidEth}	\\
&\norm{y_{s,x}^{k,n}(t)-y_{s,z}^{k,n}(t)}_{H}\leq  e^{\zeta (t-s)}\norm{x-z}_{H},\label{lipdXth}\\
&\norm{y_{s,x}^{k,n}(t)-y_{s,z}^{k,n}(t)}_{E}\leq  e^{\zeta (t-s)}\norm{x-z}_{E}\label{lipdEth},\\
&\norm{y_{s,x}^{k,n}(t)-y_{s,x}^{h,n}(t)}_{H}^2\leq C\left(\frac{1}{k}+\frac{1}{h}\right),\label{lipdelta}
\end{align}
where $\zeta$ is the constant defined in Hypotheses \ref{EU2}(\ref{EU2.2}-c).
\end{proposition}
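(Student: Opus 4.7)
The plan is to obtain the global strict solution by a Banach fixed-point argument applied to the integral form of \eqref{eqdt}, and then to derive each estimate from a dissipativity-based differential inequality combined with the left-derivative formula \eqref{Ldiff-} and the variation-of-constants inequality \eqref{varofcost}.

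First, I would fix $n\in\N$ and $k>\zeta$ and observe that $A_n(t)\in\mathscr{L}(E)$ depends continuously on $t$ by Hypothesis \ref{EU2}(\ref{EU2.5}-b), so Theorem \ref{cont_case_thm} yields a uniformly continuous evolution operator $\{U_n(t,s)\}_{(s,t)\in\overline{\Delta}}\subseteq\mathscr{L}(E)$ (and analogously in $\mathscr{L}(H)$) with $\|U_n(t,s)\|_{\mathscr{L}(V)}\leq 1$ by \eqref{contrazione-n}. Since $F_k(t,\cdot)$ is $3k$-Lipschitz on $E$ by \eqref{lipdeltaV} and $t$-measurable, the integral map
\[
\Gamma(y)(t):=U_n(t,s)x+\int_s^t U_n(t,r)\,F_k(r,y(r)+f(r))\,dr
\]
is a contraction on $C([s,s+\delta];E)$ for $\delta<1/(3k)$; since $\delta$ is independent of the initial datum, iteration produces a unique continuous global solution on $[s,T]$, and boundedness of $A_n(t)$ together with Lipschitz continuity of $F_k$ promote it to a strict solution.

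Second, to prove \eqref{stidXth} and \eqref{stidEth}, I would set $\gamma(t):=\|y^{k,n}_{s,x}(t)\|_V$ for $V\in\{H,E\}$ and apply \eqref{Ldiff-} choosing a subdifferential element $z^{*}\in\partial\|y^{k,n}_{s,x}(t)\|_V$ simultaneously witnessing the m-dissipativity of $A_n(t)$ (its linearity provides the needed flexibility) and of $F_k(t,\cdot)-\zeta I$ (from Lemma \ref{Lemma_YO}). Rewriting $F_k(t,y+f)=\bigl[F_k(t,y+f)-F_k(t,f)-\zeta y\bigr]+\zeta y+F_k(t,f)$ and using dissipativity of the bracketed piece produces
\[
\frac{d^-\gamma}{dt}(t)\leq \zeta\,\gamma(t)+\|F_k(t,f(t))\|_V,
\]
so that \eqref{vy1} and \eqref{varofcost} give \eqref{stidXth} and \eqref{stidEth}. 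The same dissipativity scheme applied to $w(t):=y^{k,n}_{s,x}(t)-y^{k,n}_{s,z}(t)$, which solves $w'=A_n(t)w+[F_k(t,y^{k,n}_{s,x}+f)-F_k(t,y^{k,n}_{s,z}+f)]$, yields $\tfrac{d^-}{dt}\|w(t)\|_V\leq \zeta\|w(t)\|_V$ and hence \eqref{lipdXth}--\eqref{lipdEth}. For the Cauchy estimate \eqref{lipdelta}, I would set $v(t):=y^{k,n}_{s,x}(t)-y^{h,n}_{s,x}(t)$ and exploit the Hilbert structure of $H$ to compute
\[
\tfrac{1}{2}\frac{d}{dt}\|v(t)\|_H^2=\langle A_n(t)v(t),v(t)\rangle_H+\langle F_k(t,y^{k,n}_{s,x}+f)-F_h(t,y^{h,n}_{s,x}+f),v(t)\rangle_H,
\]
where the first term is non-positive by dissipativity of $A_n(t)$ in $H$, and the second is bounded via \eqref{supery} by $|\zeta|\|v(t)\|_H^2+C_\zeta(\tfrac{1}{k}+\tfrac{1}{h})\,\Theta(t)$ with $\Theta(t)$ collecting the four quantities appearing in \eqref{supery}. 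The previously established $E$-estimate \eqref{stidEth} controls $\|y^{k,n}_{s,x}(t)+f(t)\|_E$ and $\|y^{h,n}_{s,x}(t)+f(t)\|_E$ uniformly in $k,h$; Hypothesis \ref{EU2}(\ref{EU2.2}-a) and the embedding $E\hookrightarrow H$ then turn this into a uniform bound on $\Theta$, and a Gronwall step produces \eqref{lipdelta}.

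I expect the main obstacle to lie in the Banach-space dissipativity argument leading to the $E$-estimates: since the subdifferential in a general Banach space is set-valued and the dissipativity of a nonlinear operator is only an ``exists $z^{*}$'' statement rather than ``for all $z^{*}$'', one must be careful to select a single duality element handling the linear term $A_n(t)$ and the nonlinear term $F_k(t,\cdot)-\zeta I$ at once. The linearity of $A_n(t)$ and a careful use of the minimum formulation in \eqref{Ldiff-} are the key ingredients that make this reduction go through; the analogous Hilbert computation in $H$ is instead routine because the duality map is single-valued.
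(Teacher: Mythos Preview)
Your proposal is correct and follows essentially the same route as the paper's proof: a contraction/iteration argument with $U_n$ for existence of the strict solution, then subdifferential/dissipativity differential inequalities combined with \eqref{varofcost} and \eqref{vy1} for the $H$- and $E$-bounds, and the Hilbert computation with \eqref{supery} plus the uniform $E$-bound from \eqref{stidEth} for the Cauchy estimate \eqref{lipdelta}. The subtlety you flag about selecting a single duality element is real but harmless and is glossed over in the paper: since $A_n(t)$ is a \emph{bounded linear} dissipative operator, the inequality $\dscal{A_n(t)y}{y^*}{E}\leq 0$ actually holds for \emph{every} $y^*\in\partial\|y\|_E$ (use $\langle e^{\tau A_n(t)}y-y,y^*\rangle\leq\|e^{\tau A_n(t)}y\|-\|y\|\leq0$ and divide by $\tau$), so one may simply pick the $y^*$ witnessing the nonlinear dissipativity of $F_k(t,\cdot)-\zeta I$ and the linear term comes along for free.
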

\begin{proof}
Fix  $n\in\N$, $k>\zeta$ and $x\in E$. We consider the operator $V:C([s,T],E){\longrightarrow} C([s,T],E)$ defined by
\[
[V(y)](t):= U_{n}(t,s) x+\int^t_s U_{n}(t,r) F_{k}(r,y(r)+f(r))dr,\quad y\in C([s,T],E),\; t\in [s,T].
\]
By Hypotheses \ref{EU2}\eqref{EU2.5} and Theorem \ref{cont_case_thm} we have $V\left(C([s,T],E)\right)\subseteq C([s,T],E)$, and, by \eqref{lipdeltaV} and \eqref{contrazione-n} for every $y,z\in C([s,T],E)$ 
\[
\norm{V(y)-V(z)}_{C([s,T],E)}\leq 3k\left(T-s\right)\norm{y-z}_{C([s,T],E)},
\]
Hence for $s<T_0<T$ small enough, by the contraction mapping theorem the problem  \eqref{eqdt} has a unique mild solution $y_{s,x,T_0}^{k,n}$ belonging to $C([s,T_0],E)$. Reiterating this argument, we get that \eqref{eqdt} has a unique mild solution in every $C([T_0,2T_0-s];E),\ldots, C([h(T_0-s)-s,h(T_0-s)+T_0];E)$ and $ C([h(T_0-s)+T_0,T];E)$ where $h$ is the integer part of $(T-T_0)(T_0-s)^{-1}$. So, attaching the solutions on such intervals, we obtain the existence of a mild solution $y_{s,x}^{k,n}$ to \eqref{eqdt} defined on $[s,T]$ and belonging to $C([s,T];E)$. Since $F_{k}(r,\cdot)$ is Lipschitz continuous for every $r\in [s,t]$,  then it is standard to prove that $y_{s,x}^{k,n}$ is unique in $C([s,T];E)$. 

Now we prove that $y_{s,x}^{k,n}$ is also a strict solution to \eqref{eqdt}. Since $f:[s,T]\rightarrow E$ is continuous, by Lemma \ref{Lemma_YO} and Hypotheses \ref{EU2}(\ref{EU2.2}-b) the map $r\in [s,T]\longrightarrow F_k(r,y_{s,x}^{k,n}(r)+f(r))\in E$ is continuous. Hence, taking into account Hypotheses \ref{EU2}(\ref{EU2.5}-b), by Theorem \ref{cont_case_thm} and the fundamental theorem of calculus we have $y_{s,x}^{k,n}\in C^1([s,T];E)$ and
\[
\frac{dy_{s,x}^{k,n}}{dt}(t)=A_{n}(t) y_{s,x}^{k,n}(t)+F_{k}(t,y_{s,x}^{k,n}(t)+f(t)).
\]

We prove \eqref{stidEth}. If $t\in[s,T]$, then by \eqref{Ldiff-}, Proposition \ref{Lemma_YO} and Hypotheses \ref{EU2} there exists $y^*\in\partial \norm{y_{s,x}^{k,n}(t)}_E$ such that
\begin{align*}
\dfrac{d^-\norm{y_{s,x}^{k,n}(t)}_E}{dt}&\leq\dscal{A_{n}(t) y_{s,x}^{k,n}(t)}{y^*}{E}+\dscal{F_{k}(t,y_{s,x}^{k,n}(t)+f(t))}{y^*}{E}\\
&=\dscal{A_{n}(t) y_{s,x}^{k,n}(t)}{y^*}{E}+\dscal{F_{k}(t,y_{s,x}^{k,n}(t)+f(t))-F_{k}(t,f(t))}{y^*}{E}\\&+\dscal{F_{k}(t,f(t))}{y^*}{E}\leq \zeta\norm{y_{s,x}^{k,n}(t)}_E+\norm{F_{k}(t,f(t))}_E.
\end{align*}
 By \eqref{varofcost} we get
\[
\norm{y_{s,x}^{k,n}(t)}_E\leq e^{\zeta (t-s)}\norm{x}_E+\int_s^te^{\zeta (t-r)}\norm{F_k(r,f(r))}_E dr.
\] 
Hence \eqref{stidEth} follows by \eqref{vy1}. 

We prove now \eqref{lipdEth}. Let $t\in[s,T]$. Still by \eqref{Ldiff-}, Proposition \ref{Lemma_YO} and Hypotheses \ref{EU2} there exists $y^*\in\partial \norm{y_{s,x}^{k,n}(t)-y_{s,z}^{k,n}(t)}_E$ such that
\begin{align*}
\dfrac{d^-\norm{y_{s,x}^{k,n}(t)-y_{s,z}^{k,n}(t)}_E}{dt}&\leq\dscal{A_{n}(t)[y_{s,x}^{k,n}(t)-y_{s,z}^{k,n}(t)]}{y^*}{E}\\
&+\dscal{F_{k}(t,y_{s,x}^{k,n}(t)+f(t))-F_{k}(t,y_{s,z}^{k,n}(t)+f(t))}{y^*}{E}\\
&\leq \zeta\norm{y_{s,x}^{k,n}(t)-y_{s,z}^{k,n}(t)}_E.
\end{align*}
By \eqref{varofcost}, \eqref{lipdEth} follows. 

Since $A_n(t) $ and $F_k(t,\cdot)-\zeta I$ are even dissipative in $H$, the same procedures performed to prove \eqref{stidEth} and \eqref{lipdEth} can be exploited to prove \eqref{stidXth} and \eqref{lipdXth}, respectively, replacing the duality product of $E$ with the inner product of $H$.

Lastly, we prove \eqref{lipdelta}. Let $t\in[s,T]$ and $h>\zeta$. By \eqref{Ldiff-}, Proposition \ref{Lemma_YO} and \ref{EU2} there exists $C=C(x,\zeta,s,T)>0$ such that
\begin{align*}
\frac{1}{2}\dfrac{d^-}{dt}\norm{y_{s,x}^{k,n}(t)-y_{s,x}^{h,n}(t)}_{H}^2&\leq\ps{A_{n}(t) [y_{s,x}^{k,n}(t)-y_{s,x}^{h,n}(t)]}{y_{s,x}^{k,n}(t)-y_{s,x}^{h,n}(t)}_{H}\\
&+\ps{F_{k}(t,y_{s,x}^{k,n}(t)+f(t))-F_{v}(t,y_{s,x}^{h,n}(t)+f(t))}{y_{s,x}^{k,n}(t)-y_{s,x}^{h,n}(t)}_{H}\\
&\leq \zeta\norm{y_{s,x}^{k,n}(t)-y_{s,x}^{h,n}(t)}_{H}^2\\
&+\left(\frac{1}{k}+\frac{1}{h}\right)C\left(\norm{F(t,y_{s,x}^{k,n}(t)+f(t))}_{H}^2+\norm{F(t,y_{s,x}^{h,n}(t)+f(t))}_{H}^2\right)\\
&+\left(\frac{1}{k}+\frac{1}{h}\right)C\left(\norm{y_{s,x}^{k,n}(t)+f(t)}_{H}^2+\norm{y_{s,x}^{h,n}(t)+f(t)}_{H}^2\right).
\end{align*}
By \eqref{varofcost} we obtain
\begin{align*}
\norm{y_{s,x}^{k,n}(t)-y_{s,x}^{h,n}(t)}_{X}^2&\leq \left(\frac{1}{k}+\frac{1}{h}\right)C\int^t_se^{2(t-r)\zeta}\left(\norm{F(r,y_{s,x}^{k,n}(r)+f(r))}_{H}^2+\norm{F(t,y_{s,x}^{h,n}(r)+f(r))}_{H}^2\right)dr\\
&+\left(\frac{1}{k}+\frac{1}{h}\right)C\int^t_se^{2(t-r)\zeta}\left(\norm{y_{s,x}^{k,n}(r)+f(r)}_{H}^2+\norm{y_{s,x}^{h,n}(r)+f(r)}_{H}^2\right)dr.
\end{align*}
Hence by \eqref{stidEth} and Hypotheses \ref{EU2}(\ref{EU2.2}-a) we get \eqref{lipdelta}.
\end{proof}

For every $k>\zeta$, we study the mild solution of the following PDE.
\begin{align}\label{eqd}
\begin{cases}
\dfrac{dy_{s,x}^k }{dt}(t)=A(t)y_{s,x}^k (t)+F_{k}(t,y_{s,x}^k +f(t)),\ \  t\in(s,T],\\
y_{s,x}^k (s)=x\in E.
\end{cases}
\end{align}

\begin{proposition}\label{sold}
Assume that Hypothesis \ref{EU2} holds true. For every $k>\zeta$ and $x\in E$ equation\\ \eqref{eqd} has a unique 
 mild solution $y_{s,x}^k$ in $C([s,T],H)\cap C_b((s,T],E)$ such that for every $t\in [s,T]$
\begin{equation}\label{convdt}
\lim_{n{\to}+\infty}\norm{y_{s,x}^{k,n}(t)-y_{s,x}^k(t)}_{E}=0,
\end{equation}
where, for every $n\in\N$, $y_{s,x}^{k,n}\in C([s,T],E)$ is the unique strict solution  to \eqref{eqdt} given by Proposition \eqref{soldt}. In addition, for every $k,h>\zeta$, $x,z\in E$ and $t\in [s,T]$  we have
\begin{align}
&\norm{y_{s,x}^k(t) }_{H}\leq e^{\zeta (t-s)}\norm{x}_{H}+3\int_s^te^{\zeta (t-r)}\left(\norm{F(r,f(r))}_{H}+\max\{0,\zeta\} \norm{f(r)}_{H}\right)dr,\label{stidX}\\
&\norm{y_{s,x}^k(t) }_{E}\leq e^{\zeta (t-s)}\norm{x}_{E}+3\int_s^te^{\zeta (t-r)}\left(\norm{F(r,f(r))}_{E}+\max\{0,\zeta\} \norm{f(r)}_{E}\right)dr,\label{stidE}	\\
&\norm{y_{s,x}^k (t)-y_{s,z}^k (t)}_{H}\leq  e^{\zeta (t-s)}\norm{x-z}_{H},\label{lipdX}\\
&\norm{y_{s,x}^k (t)-y_{s,z}^k (t)}_{E}\leq  e^{\zeta (t-s)}\norm{x-z}_{E}\label{lipdE},\\
&\norm{y_{s,x}^k (t)-y_{s,x}^{h}(t)}_{H}^2\leq C\left(\frac{1}{k}+\frac{1}{h}\right),\label{stimaconv}
\end{align}
where $\zeta$ is the constant given by Hypotheses \ref{EU2}(\ref{EU2.2}-c) and $C=C(x,\zeta,s,T)>0$ is the constant given by \eqref{lipdelta}.
\end{proposition}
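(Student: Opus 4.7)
The plan is to construct $y_{s,x}^k$ directly as a fixed point of the Picard operator built from the true evolution operator $U(t,s)$, and then to recover the convergence \eqref{convdt} by a quantitative comparison with the sequence $\{y_{s,x}^{k,n}\}_{n\in\N}$ produced in Proposition~\ref{soldt}. Concretely, I would introduce the map
\[
[W(y)](t):=U(t,s)x+\int_s^tU(t,r)F_k(r,y(r)+f(r))\,dr,\qquad y\in C([s,T];E),\ t\in[s,T],
\]
whose first summand is $E$-continuous by Hypothesis~\ref{EU2}(\ref{EU2.5}-a) and whose integrand lies in $C([s,t];E)$ thanks to the Lipschitz bound \eqref{lipdeltaV} combined with Hypothesis~\ref{EU2}(\ref{EU2.2}-b). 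Invoking \eqref{contrazione} together with the $3k$-Lipschitz constant of $F_k(r,\cdot)$ in $E$, $W$ is a contraction on $C([s,s+\delta];E)$ for $\delta>0$ small enough. Gluing the local pieces as in the proof of Proposition~\ref{soldt} produces a unique $y_{s,x}^k\in C([s,T];E)\subseteq C([s,T];H)\cap C_b((s,T];E)$ satisfying the mild equation.

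To establish \eqref{convdt} I would subtract the two mild formulations and split
\begin{align*}
y_{s,x}^{k,n}(t)-y_{s,x}^k(t)=&\,[U_n(t,s)-U(t,s)]x\\
&+\int_s^t[U_n(t,r)-U(t,r)]F_k(r,y_{s,x}^{k,n}(r)+f(r))\,dr\\
&+\int_s^tU(t,r)\bigl[F_k(r,y_{s,x}^{k,n}(r)+f(r))-F_k(r,y_{s,x}^k(r)+f(r))\bigr]\,dr.
\end{align*}
The first summand vanishes in $E$ by \eqref{yosidaE}; the second by dominated convergence, exploiting the uniform bound $\|U_n(t,r)-U(t,r)\|_{\mathscr{L}(E)}\leq 2$ from \eqref{contrazione-n}--\eqref{contrazione} together with the $n$-independent $E$-boundedness of the factor $F_k(r,y_{s,x}^{k,n}(r)+f(r))$ obtained by combining \eqref{vy1} with the uniform estimate \eqref{stidEth}. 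The last summand is dominated in $E$ by $3k\int_s^t\|y_{s,x}^{k,n}(r)-y_{s,x}^k(r)\|_E\,dr$ via \eqref{contrazione} and \eqref{lipdeltaV}, so Gronwall's inequality yields \eqref{convdt}. With the convergence in place, the estimates \eqref{stidX}--\eqref{lipdE} and \eqref{stimaconv} follow at once from \eqref{stidXth}--\eqref{lipdEth} and \eqref{lipdelta} by letting $n\to+\infty$: the right-hand sides are $n$-independent and the norms on the left are continuous along the convergence just established (the $H$-convergence being inherited from the $E$-convergence through the embedding $E\hookrightarrow H$).

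The main obstacle I expect lies in rigorously handling the second term of the decomposition above: there one has to manage simultaneously the approximation of $A$ by $A_n$ (encoded in \eqref{yosidaE}) and the nonlinear dependence on $y_{s,x}^{k,n}$. The dominated convergence step is legitimate only because of the $n$-uniform $E$-bound on $F_k(r,y_{s,x}^{k,n}(r)+f(r))$, and this uniform bound ultimately rests on the monotone structure encoded in Hypothesis~\ref{EU2}(\ref{EU2.2}-c), which propagates to the Yosida approximants through Lemma~\ref{Lemma_YO}. Once this passage is under control, the remaining work reduces to a routine Gronwall argument and the continuity of the norms $\|\cdot\|_E$ and $\|\cdot\|_H$.
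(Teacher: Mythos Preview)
Your proposal is correct and follows essentially the same route as the paper: a contraction argument in $E$ for existence/uniqueness, the identical three-term splitting of $y_{s,x}^{k,n}-y_{s,x}^k$, the uniform $E$-bound on $F_k(r,y_{s,x}^{k,n}(r)+f(r))$ via \eqref{vy1} and \eqref{stidEth}, Gr\"onwall to absorb the third term, dominated convergence for the second, and finally passage to the limit in \eqref{stidXth}--\eqref{lipdelta}. The only minor imprecision is that Hypothesis~\ref{EU2}(\ref{EU2.5}-a) gives $E$-continuity of $U(\cdot,\cdot)_{|E}$ only on the open triangle $\Delta$, so the fixed point a priori lives in $C([s,T];H)\cap C((s,T];E)$ rather than $C([s,T];E)$; this does not affect the argument.
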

\begin{proof}
We fix $k>\zeta$ and $x\in E$.
Since $F_k$ is Lipschitz continuous and considering \eqref{contrazione}, the existence and uniqueness of a mild solution to \eqref{eqd} can be proven by a fixed point procedure as in the proof of Proposition \ref{soldt}. So by Hypotheses \ref{EU2}\eqref{EU2.5} equation \eqref{eqd} has a unique mild solution $y_{s,x}^k $ belonging to $C([s,T];H)\cap C((s,T];E)$.

Let us prove \eqref{convdt}.  For every $n\in\N$ and  $t\in [s,T]$ we have
\begin{align*}
y_{s,x}^{k,n}(t)-y^{k}_{s,x}(t)&\leq \left[ U_n(t,s)-U(t,s)\right]x\phantom{aaaaaaaaaaa}\\
&+\int^t_sU_n(t,r)F_k\left(r,y_{s,x}^{k,n}(r)+f(r)\right)dr\\
&-\int^t_s U(t,r)F_k(r,y_{s,x}^k (r)+f(r))dr\\
&=\left[U_n(t,s)-U(t,s)\right]x\\
&+\int^t_s\left[U_n(t,r)-U(t,r)\right]F_k(r,y_{s,x}^{k,n}(r)+f(r))dr\\
&+\int^t_sU(t,r)\left[F_k(r,y_{s,x}^{k,n}(r)+f(r))-F_k(r,y_{s,x}^{k}(r)+f(r))\right]dr,
\end{align*}
hence by \eqref{lipdeltaV} (with $V=E$) we have
\begin{align*}
\norm{y_{s,x}^{k,n}(t)-y_{s,x}^{k}(t)}_E & \leq  \norm{U_n(t,s)-U(t,s)}_{\mathscr{L}(E)}\norm{x}_E\\
&+\int^t_s  \norm{U_n (t,r)-U(t,r)}_{\mathscr{L}(E)}\norm{F_k(r,y_{s,x}^{k,n}(r)+f(r))}_E dr\\
&+3k\int^t_s\norm{U(t,r)}_{\mathscr{L}(E)}\norm{y_{s,x}^{k,n}(r)-y_{s,x}^{k}(r))}_Edr.
\end{align*}
Noting that \eqref{stidEth} is independent to $n\in\N$, by \eqref{vy1} (with $V=E$), Hypotheses \ref{EU2}(\ref{EU2.2}-a) and \eqref{contrazione} there exists a constant $M_k=M_k(x,\zeta,s,T)$ such that
\begin{align}
\norm{y_{s,x}^{k,n}(t)-y_{s,x}^{k}(t)}_E  \leq M_k\Bigg(&\norm{U_n(t,s)-U(t,s)}_{\mathscr{L}(E)}\notag\\
&+\int^t_s  \norm{U_n (t,r)-U(t,r)}_{\mathscr{L}(E)}+\norm{y_{s,x}^{k,n}(r)-y_{s,x}^{k}(r))}_Edr\Bigg).\label{perconvergenza}
\end{align}
by the Gr\"onwall inequality there exists a constant $M_k'=M_k'(x,\zeta,s,T)$ such that
\[
\norm{y_{s,x}^{k,n}(t)-y_{s,x}^{k}(t)}_E\leq M_k'\Bigg(\norm{U_n(t,s)-U(t,s)}_{\mathscr{L}(E)}+\int^t_s  \norm{U_n (t,r)-U(t,r)}_{\mathscr{L}(E)}dr\Bigg),
\]
so by Hypotheses \ref{EU2}(\ref{EU2.5}-c), \eqref{contrazione-n}, \eqref{contrazione} and the dominated convergence theorem we obtain \eqref{convdt}.

Finally letting $n\longrightarrow+\infty$ in \eqref{stidXth}, \eqref{stidEth}, \eqref{lipdXth}, \eqref{lipdEth} and \eqref{lipdelta} we obtain \eqref{stidX}, \eqref{stidE}, \eqref{lipdX}, \eqref{lipdE} and \eqref{stimaconv}, respectively.\end{proof}

\subsection{Well-posedness of \eqref{eq}}

Now we can prove that \eqref{eq} has unique mild solution. In all this section we fix $T>0$, $s\in [0,T)$ and $f:[s,T]\rightarrow E$ continuous.

\begin{thm}\label{proyos}
Assume that either Hypothesis \ref{Hyp1} or Hypothesis \ref{Hyp2} holds true. Then for every $x\in E$ equation \eqref{eq} has a unique mild solution $y_{s,x}$ in $C([s,T],H)\cap C_b((s,T],E)$ such that
\begin{equation}\label{convdX}
\lim_{k\rightarrow+\infty}\norm{y_{s,x}^k-y_{s,x}}_{C([s,T],H)}=0,\quad \forall\; T>0,\;
\end{equation}
where, for every $k>\zeta$, $y_{s,x}^{k}$ is the unique mild solution  to \eqref{eqd} given by Proposition \eqref{sold}. Moreover for every $x,z\in E$ and $(s,t)\in\Delta$ we have
\begin{align}
&\norm{y_{s,x}(t)}_{H}\leq e^{\zeta (t-s)}\norm{x}_{H}+3\int_s^te^{\zeta (t-r)}\left[\norm{F(r,f(r))}_{H}+\max\{0,\zeta\} \norm{f(r)}_{H}\right]dr,\label{stiX}\\
&\norm{y_{s,x}(t)}_E\leq e^{\zeta (t-s)}\norm{x}_E+3\int_s^te^{\zeta (t-r)}\left[\norm{F(r,f(r))}_E+\max\{0,\zeta\} \norm{f(r)}_E\right]dr,\label{stiE}	\\
&\norm{y_{s,x}(t)-y_{s,z}(t)}_{H}\leq  e^{\zeta (t-s)}\norm{x-z}_{H}\label{lipX}\\
&\norm{y_{s,x}(t)-y_{s,z}(t)}_E\leq  e^{\zeta (t-s)}\norm{x-z}_E\label{lipE},
\end{align}
where $\zeta$ is the constant defined in Hypotheses \ref{EU2}(\ref{EU2.2}-c). If in addition  Hypothesis \ref{Hyp2} holds true then for every $x\in E$
\begin{equation}\label{convE}
\lim_{k\rightarrow+\infty}\sup_{t\in [s+\varepsilon,T]}\norm{y_{s,x}^{k}(t)-y_{s,x}(t)}_{E}=0,\ \ \forall\varepsilon>0.
\end{equation} 
\end{thm}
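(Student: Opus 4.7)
The plan is to construct $y_{s,x}$ as the $k\to\infty$ limit of the Yosida-regularized solutions $y_{s,x}^{k}$ furnished by Proposition \ref{sold}, and then to upgrade its regularity and check the mild identity. Since $C([s,T],H)$ is complete, estimate \eqref{stimaconv} produces $y_{s,x}\in C([s,T],H)$ with $y_{s,x}^{k}\to y_{s,x}$ uniformly in $H$-norm, which is exactly \eqref{convdX}; passing to the limit in \eqref{stidX} and \eqref{lipdX} immediately yields \eqref{stiX} and \eqref{lipX}. The guiding remark is that the integrand in the mild formulation forces us to work in $E$, while the basic convergence lives in $H$, and the two hypotheses offer two different routes to bridge this gap.

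To place $y_{s,x}(t)$ inside $E$ I split cases. Under Hypothesis \ref{Hyp1}, the uniform bound \eqref{stidE} together with reflexivity of $E$ gives, for each $t\in[s,T]$, a subsequence of $\{y_{s,x}^{k}(t)\}_{k}$ converging weakly in $E$; its weak $E$-limit must coincide with $y_{s,x}(t)$ because $E\hookrightarrow H$ continuously and the full sequence converges strongly in $H$. Weak lower semicontinuity of $\|\cdot\|_{E}$ then gives \eqref{stiE}, and the same argument applied to $y_{s,x}^{k}-y_{s,z}^{k}$ yields \eqref{lipE}. Under Hypothesis \ref{Hyp2}, I instead estimate $y_{s,x}^{k}-y_{s,x}^{h}$ directly in $E$-norm: using the mild formula for \eqref{eqd}, the contraction \eqref{contrazione}, the uniform $E$-bound \eqref{stidE}, the local Lipschitz continuity of $F$ in $E$, and a Grönwall-type inequality, I obtain that $\{y_{s,x}^{k}\}$ is Cauchy in $C([s+\varepsilon,T],E)$ for every $\varepsilon>0$; this simultaneously produces \eqref{convE}, \eqref{stiE} and \eqref{lipE}.

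The core step, and the main obstacle, is passing to the limit in the nonlinear integrand to recover the mild identity for $y_{s,x}$. Under Hypothesis \ref{Hyp2} this is immediate from the $C([s+\varepsilon,T],E)$-convergence and the local Lipschitz continuity of $F$ in $E$. Under Hypothesis \ref{Hyp1}, I only have strong $H$-convergence together with a uniform $E$-bound, and I must handle the resolvent $J_{k}$ and the nonlinearity $F$ separately. The bound \eqref{cji}, combined with Hypothesis \ref{EU2}(\ref{EU2.2}-a) and the uniform $E$-bound on $y_{s,x}^{k}(r)+f(r)$, shows that $J_{k}(r,y_{s,x}^{k}(r)+f(r))\to y_{s,x}(r)+f(r)$ in $H$; Remark \ref{punto-fisso} (applied with the $x_{0}$ supplied by Hypothesis \ref{Hyp1}) together with \eqref{lipJ} keeps this sequence bounded in $E$ uniformly in $k$ and $r$. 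The weak continuity assumption \eqref{convergenza-debole} then forces, for every $z\in E$, convergence of $\langle F_{k}(r,y_{s,x}^{k}(r)+f(r)),z\rangle_{H}$ to $\langle F(r,y_{s,x}(r)+f(r)),z\rangle_{H}$; by density of $E$ in $H$ and the uniform $H$-bound from \eqref{vy1}, this promotes to weak convergence in $H$. Exchanging limit and time integral via dominated convergence for weak integrals, with the $H$-domination again furnished by \eqref{vy1} and \eqref{contrazione}, yields the mild identity for $y_{s,x}$.

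Finally, uniqueness in $C([s,T],H)\cap C_{b}((s,T],E)$ follows from a standard dissipativity argument in $H$. For two such mild solutions $y_{1},y_{2}$ sharing the same initial datum, I approximate the evolution operator via $U_{n}$ as in \eqref{yosidaE}, apply the left-derivative rule \eqref{Ldiff-} to $t\mapsto \|y_{1}(t)-y_{2}(t)\|_{H}^{2}$, use the $H$-dissipativity of $F(t,\cdot)-\zeta I$, and invoke \eqref{varofcost} to conclude $\|y_{1}(t)-y_{2}(t)\|_{H}\leq e^{\zeta(t-s)}\|y_{1}(s)-y_{2}(s)\|_{H}=0$. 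The delicate point throughout remains the weak-limit argument in the nonlinearity under Hypothesis \ref{Hyp1}, since the convergence furnished by \eqref{convergenza-debole} must be coupled carefully with the action of $U(t,r)$ under the time integral to justify the exchange of limit and integration.
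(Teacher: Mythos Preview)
Your proposal is correct and follows essentially the same route as the paper: the construction of $y_{s,x}$ from the Cauchy estimate \eqref{stimaconv}, the case split between Hypotheses \ref{Hyp1} and \ref{Hyp2}, the reflexivity/weak-limit argument via Remark \ref{punto-fisso} and \eqref{convergenza-debole} in Case~1, the direct $E$-Cauchy estimate with Gr\"onwall in Case~2, and the dominated-convergence passage in the mild identity all coincide with the paper's proof. The only point where the paper is more careful is uniqueness: rather than differentiating $\|y_1(t)-y_2(t)\|_H^2$ directly (mild solutions need not be $C^1$), it builds $C^1$ approximants by replacing \emph{both} $U$ by $U_n$ and $F$ by $F_n$ in the mild formula evaluated at the true solutions, differentiates those, and then passes to the limit---your sketch should regularize $F$ as well, not only $U$.
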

\begin{proof}
Fix $x\in E$. By \eqref{stimaconv}, there exists $y_{s,x}\in C([s,T],H)$ such that \eqref{convdX} holds true. Moreover letting $k\rightarrow +\infty$ in \eqref{stidX} and \eqref{lipdX}, we obtain \eqref{stiX} and \eqref{lipX}. Now we prove that the limit $y_{s,x}$ is the unique mild solution to \eqref{eq}. \\

\noindent \textbf{Existence}
Here we prove that $y_{s,x}$ is a mild solution to \eqref{eq} but to do so we must distinguish the following two cases.

\begin{itemize}
\item\textbf{Case 1: Hypothesis \ref{Hyp1} holds true.}

By the reflexivity of $E$ and \eqref{stidE}, for every $t\in [s,T]$, there exists a subsequence of $\{y_{s,x}^{k}(t)\}_{k>\zeta}\subseteq E$ weakly convergent in $E$. Hence by \eqref{stidE}, \eqref{lipdE} and the lower semicontinuity of the norm, \eqref{stiE} and \eqref{lipE} follow. By \eqref{vy1} with $V=H$, \eqref{stidE} and Hypothesis \ref{EU2}(\ref{EU2.2}-a), for every $t\in [s,T]$ and $x\in E$ the sequence $\{F_k(t,y_{s,x}^{k}(t)+f(t))\}_{k>\zeta}$ is bounded in $H$, so it has a subsequence (still denoted by $\{F_k(t,y_{s,x}^{k}(t)+f(t))\}_{k>\zeta}$) weakly convergent to $L\in H$, namely for every $h\in H$
\begin{equation}\label{convL}
\lim_{k\rightarrow +\infty}\abs{\ps{F_k(t,y_{s,x}^{k}(t)+f(t))-L}{h}_H}=0.
\end{equation}
We prove that \eqref{convL} holds true with $L=F(t,y_{s,x}(t)+f(t))$.
Fix $t\in [s,T]$. For every $k>\zeta$ we have
\begin{align*}
\norm{J_{k}(t,y_{s,x}^k(t)+f(t))-y_{s,x}(t)-f(t)}_{H}&\leq \norm{J_{k}(t,y_{s,x}^k(t)+f(t))-y_{s,x}^k(t)-f(t)}_{H}\\
&+\norm{y_{s,x}^k(t)-y_{s,x}(t)}_{H}.
\end{align*}
By \eqref{cji}, we get
\begin{align*}
\norm{J_{k}(t,y_{s,x}^k(t)+f(t))-y_{ s,x}(t)-f(t)}_{H}&\leq \frac{1}{k}\left[\norm{F(t,y_{s,x}^k(t)+f(t))}_{H}+\abs{\zeta}\abs{y_{s,x}^k(t)+f(t)}_{H}\right]\\
&+\norm{y_{s,x}^k(t)-y_{ s,x}(t)}_{H},
\end{align*}
By Hypothesis \ref{EU2}(\ref{EU2.2}-a), \eqref{stidE}  and \eqref{convdX} we obtain
\begin{align}\label{convJ}
\lim_{k{\rightarrow} +\infty}\norm{J_{k}(t,y_{s,x}^k(t)+f(t))-y_{ s,x}(t)-f(t)}_{H}=0
\end{align}
By  Remark \ref{punto-fisso} and Hypothesis \ref{Hyp1} , for every $k>\zeta$ we have
\begin{equation}\label{unifE}
\norm{J_{k}(t,y_{s,x}^k(t)+f(t))}_E\leq \norm{y_{s,x}^k(t)}_E+\norm{f(t)}_E+2\norm{x_0}_{E},
\end{equation}
where $x_0\in E$ is given by Hypothesis \ref{Hyp1}. By \eqref{stidE} and \eqref{unifE}, there exists a constant $M>0$ independent of $k$ such that
\begin{equation}\label{unifEV}
\norm{J_{k}(t,y_{s,x}^k(t)+f(t))}_E\leq M,\qquad k>\zeta.
\end{equation} 
Hence, the sequence $\{J_{k}(t,y_{s,x}^k(t)+f(t))\}_{k>\zeta}$ is bounded in $E$ and by \eqref{convJ} converges strongly in $H$. By Hypothesis \ref{Hyp1} (see \eqref{convergenza-debole}) for every $h\in E$ we have
\[
\lim_{k\rightarrow+\infty}\abs{\ps{F(t,J_{k}(t,y_{s,x}^k(t)+f(t)))-F(t,y_{ s,x}(t)+f(t))}{h}_{H}}=0.
\]
Since $E$ is dense in $H$, we can conclude that $L=F(y_{ s,x}(t)+f(t))$ in \eqref{convL}.\\
\noindent By Hypotheses \ref{EU2}(\ref{EU2.2}-a), \eqref{vy1} (with $V=H$) and \eqref{unifEV}, there exists $C>0$ such that for every $k>\zeta$ we have
\begin{equation}\label{unifEH}
\norm{F(t,J_{k}(t,y_{s,x}^k(t)+f(t)))}_H\leq C.
\end{equation} 
By proposition \ref{sold}, for every $k>\zeta$, $t\in [s,T]$ and $h\in{H}$, we have 
\[
\ps{y_{s,x}^{k}(t)}{h}_{H}=\ps{U(t,s)x}{h}_{H}+\int_s^t\ps{U(t,r)F_k(r,y_{s,x}^k(r)+f(r))}{h}_Hdr.
\]
Letting $k\rightarrow +\infty$ by \eqref{convJ}, \eqref{unifEH} and the dominated convergence theorem we obtain
\[
\ps{y_{s,x}(t)}{h}_{H}=\ps{U(t,s)x}{h}_{H}+\int_s^t\ps{U(t,r)F(r,y_{ s,x}(r)+f(r))}{h}_{H} dr,
\]
 and by the arbitrariness of $h\in H$ the function $y_{s,x}$ solves the mild form of \eqref{eq}.

\item \textbf{Case 2: Hypothesis \ref{Hyp2} holds true.}  

We start by proving \eqref{convE}. For every $k,h>\zeta$ and $t\in [s,T]$
we have
\begin{align*}
\norm{y_{s,x}^{k}(t)-y_{s,x}^{h}(t)}_E&=\norm{\int^t_sU(t,r)\left(F_k(r,y_{s,x}^k(r)+f(r))-F_h(r,y_{s,x}^{h}(r)+f(r))\right)dr}_E,
\end{align*}
so by \eqref{contrazione} we get 
\begin{align*}
\norm{y_{s,x}^{k}(t)-y_{s,x}^{h}(t)}_E\leq \int^t_s\norm{F(r,J_k(y_{s,x}^k(r)+f(r)))-F(r,J_h(y^{h}_{s,x}(r)+f(r)))}_Edr
\end{align*}

By Hypotheses \ref{Hyp2}, $F(t,\cdot):E\longrightarrow E$ is locally Lipschitz continuous. By \eqref{cji}  and \eqref{stidE} there exists a constant $R:=R(\zeta,T,x)>0$ such that, for every $k,h>\zeta$ and $t\in [s,T]$ we have
\begin{equation*}
\norm{J_k(t,y_{s,x}^k(t)+f(t))}_E+\norm{J_h(t,y_{s,x}^{h}(t)+f(t))}_E\leq R.
\end{equation*}
Let $L_R>0$ be the Lipschitz constant of $F(t,\cdot)$ on $B_E(0,R)$. By \eqref{superyE} with ${\rm D}_F$ replaced by $E$, we obtain
{\small\begin{align*}
\norm{y_{s,x}^{k}(t)-y_{s,x}^{h}(t)}_E&
\leq L_R\int^t_s\norm{J_k(r,y_{s,x}^k(r)+f(r))-J_h(r,y_{s,x}^{h}(r)+f(r))}_Edr\\
&\leq L_R\zeta\int^t_s\norm{y_{s,x}^k(r)-y_{s,x}^{h}(r)}_Edr\\
&+L_R\left(\frac{1}{k}+\frac{1}{h}\right)\int^t_s\norm{F(r,y^{k,n}_{s,x}(r)+f(r))}_E+\norm{F(r,y_{s,x}^{h}(r)+f(r))}_Edr\\
&+L_R\left(\frac{1}{k}+\frac{1}{h}\right)\int^t_s\norm{y^{k}_{s,x}(r)+f(r)}_E+\norm{y^{h}_{s,x}(r)+f(r)}_Edr
\end{align*}}
For every $k,v>\zeta$ and $t\in [s,T]$, by Hypotheses \ref{Hyp2}, \eqref{stidE} and the Gr\"onwall inequality there exists a constant $C:=C(\zeta,x,T)$ such that
\begin{align*}
\norm{y_{s,x}^{k}(t)-y_{s,x}^{h}(t)}_E\leq C\left(\frac{1}{k}+\frac{1}{h}\right),
\end{align*}
so there exists $\widetilde{y}_{s,x}\in C((s,T],E)$ that verifies \eqref{convE}. Since $E$ is continuously embedded in $H$ we have $\widetilde{y}_{s,x}\equiv y_{s,x}$. Moreover, by \eqref{stidE}, \eqref{lipdE} and \eqref{convE} we obtain \eqref{stiE} and \eqref{lipE}.
To prove that $y_{s,x}$ is a mild solution of \eqref{eq} we still have to prove that for every $t\in [s,T]$ 
\begin{equation}\label{mildFFE}
\lim_{k\rightarrow +\infty}\int^t_s\norm{F_k(r,y_{s,x}^k(r)+f(r))-F(r,y_{ s,x}(r)+f(r))}_Edr=0.
\end{equation}
 By \eqref{cji}, \eqref{lipJ} (with $V=E$) and \eqref{convE} we have
\begin{align*}
\lim_{k\rightarrow+\infty}\norm{J_{k}(t,y_{s,x}^{k}(t)+f(t))- y_{s,x}(t)-f(t)}_E=0.
\end{align*}
Recalling that $F(t,\cdot):E\longrightarrow E$ is locally Lipschitz continuous we have
\begin{align}\label{convFE}
\lim_{k\rightarrow+\infty}\norm{F(t,J_{k}(t,y_{s,x}^{k}(t)+f(t)))-F(t,y_{s,x}(t)+f(t))}_E=0.
\end{align}
Finally by \eqref{convFE} and the Dominated Convergence theorem we obtain \eqref{mildFFE} and so $y_{s,x}$ is a mild solution of \eqref{eq}, for every $x\in E$.
\end{itemize}

\noindent \textbf{Uninqueness} Let $u_{s,x}$ and $v_{s,x}$ be two mild solutions to \eqref{eq} in $C([s,T];H)\cap C((s,T];E)$. For every $n\in\N$ we define the functions $u^n_{s,x}$ and $v^n_{s,x}$ given, for every $t\in [s,T]$, by
\begin{align*}
    &u_{s,x}^n(t):=U_{n}(t,s)x+\int_s^tU_{n}(t,r)F_n(r,u_{s,x}(r)+f(r))dr\\
     &v_{s,x}^n(t):=U_{n}(t,s)x+\int_s^tU_{n}(t,r)F_n(r,v_{s,x}(r)+f(r))dr,
\end{align*}
where $F_n$ are the Yosida approximations of $F$ given by \eqref{defyod} and $\{U_n(t,s)\}_{(s,t)\in\overline{\Delta}}$ are the approximations of  $\{U(t,s)\}_{(s,t)\in\overline{\Delta}}$ given by Hypotheses \ref{EU2}(\ref{EU2.5}-c).

By the Lipschitzianity of $F_n$, Theorem \ref{cont_case} and Hypotheses \ref{EU2}(\ref{EU2.2}-b) the function $u_{s,x}^n, v_{s,x}^n$ belongs to $C^1([s,T];E)$ and
\begin{align}
    &\frac{d}{dt}u_{s,x}^n(t)=A_n(t)u_{s,x}^n(t)+F_n(t,u_{s,x}(t)+f(t))\label{dun}\\
     &\frac{d}{dt}v_{s,x}^n(t)=A_n(t)v_{s,x}^n(t)+F_n(t,v_{s,x}(t)+f(t))\label{dvn},
\end{align}
where $A_n(t)$ are the Yosida approximations of $A(t)$. Setting $z^n=u^n_{s,x}-v^n_{s,x}$ for every $n\in\N$, by \eqref{dun} and \eqref{dvn}, for every $(s,t)\in\Delta$ we have
\begin{align*}
\norm{z^n(t)}_H^2&=\int_s^t\ps{A_n(t)z^n(r)}{z^n(r)}_Hdr\\
&+\int_s^t\ps{F_n(r,u_{s,x}(r)+f(r))-F_n(r,v_{s,x}(r)+f(r))}{z^n(r)}_{H} dr,
\end{align*}
by Hypothesis \ref{EU2} we get
\begin{align*}
\norm{z^n(t)}_H^2\leq \int_s^t\ps{F_n(r,u_{s,x}(r)+f(r))-F_n(r,v_{s,x}(r)+f(r))}{z^n(r)}_{H} dr.
\end{align*}
By the same arguments used in the previous proofs, letting $n\longrightarrow +\infty$ we obtain
\begin{align*}
\norm{u_{s,x}(t)-v_{s,x}(t)}_H^2&\leq \int_s^t\ps{F(r,u_{s,x}(r)+f(r))-F(r,v_{s,x}(r)+f(r))}{u_{s,x}(r)-v_{s,x}(r)}_{H} dr\\
&\leq \zeta\int_s^t\norm{u_{s,x}(r)-v_{s,x}(r)}^2_H,
\end{align*}
where $\zeta$ is given by Hypotheses \ref{EU2}(\ref{EU2.2}-c). Finally, the Gr\"onwall inequality implies the uniqueness.
\end{proof}

\begin{remark}\label{rmkE}
   We note that only under Hypothesis \ref{Hyp2} the Hilbert space $H$ is not necessary. Indeed, by \eqref{convE}, the analysis of the problem \eqref{eq} can be restricted only to the space $E$. Due to this fact, in this case, we can drop the assumptions regarding $H$ in Hypothesis \ref{EU2}. Moreover, under Hypothesis \ref{Hyp2}, it is not necessary to approximate $F$. Indeed the locally Lipschitzianity of $F:E\rightarrow E$ is sufficient to perform a contraction method as in Proposition \ref{soldt}.
\end{remark}

\begin{remark}\label{rmkT}
Theorem \ref{proyos} still holds if we replace the interval $[0,T]$ by $[-T,T]$.
\end{remark}

\begin{remark}
If we add a further Lipschitz continuous perturbation $G:E\rightarrow E$ in \eqref{eq}, then Propositions \ref{soldt}, \ref{sold} and Theorem \ref{proyos} still hold.
\end{remark}

\section{Well-posedness of the stochastic problem \eqref{stocpbintro}}\label{sect:WS}

Let $T>0$. In this section, we investigate the well-posedness of equation \eqref{stocpbintro}. First, we will prove that for every $x\in E$ equation \eqref{stocpbintro} has a unique mild solution in the following sense.

\begin{defn}
Given $x\in E$ and $s\in [0,T)$, a mild solution of problem \eqref{stocpbintro} is a $E$-valued stochastic process $\{X_{s,x}(t)\}_{t\in[s,T]}$ that satisfies 
\begin{equation}
X_{s,x}(t)=U(t,s)x+\int_s^tU(t,r)F(r,X_{s,x}(r))dr+Z_s(t),\ \ \mathbb{P}\mbox{-a.s.}\ t\in [s,T],\nonumber
\end{equation}
where $\{Z_s(t)\}_{t\in[s,T]}$ is the stochastic convolution process defined by 
\begin{equation}\label{convoluzionestocastica}
Z_s(t)=\int_s^tU(t,r)B(r)dW(r).
\end{equation}
\end{defn}

The idea is to exploit Theorem \ref{proyos} using a change of variable. To this end, we need to assume that the stochastic convolution process $\{Z_s(t)\}_{t\in[s,T]}$ has continuous trajectories $\mathbb{P}$-a.s.

\begin{hypothesis}\label{Hyp-F}
    Assume that either Hypothesis \ref{Hyp1} or Hypothesis \ref{Hyp2} holds and that the following conditions hold.
    \begin{enumerate}[{\rm (i)}]
        \item $\{B(t)\}_{t\in[0,T]}$ is a family of linear bounded operators on $H$ and for every $x\in H$ the mapping $t\in[0,T]\longmapsto B(t)x\in H$ is Borel measurable.
        \item  For every $s\geq 0$ and $p\geq 2$ the stochastic convolution process $\{Z_s(t)\}_{t\in[s,T]}$ defined by \eqref{convoluzionestocastica} belongs to $\mathcal{C}^p([s,T];E)$ and
        \begin{equation}\label{cond-F}
        \mathbb{E}\left[\sup_{t\in [s,T]}\norm{F(t,Z_s(t))}_E^p\right]<+\infty.
        \end{equation}
        
    \end{enumerate}

\end{hypothesis}

In Subsection \ref{sec:regulariti-stoc-conv} we prove a criterion to guarantee that the stochastic convolution process is regular enough in the case where $E=C(\mathcal{O})$ where $\mathcal{O}$ is a bounded subset of $\R^d$ with $d\in\N$. Now we can prove the main result of this section.

\begin{thm}\label{solMild}
Assume that Hypothesis \ref{Hyp-F} holds. For every $s\in [0,T)$ and $x\in E$, \eqref{stocpbintro} has a unique (probabilistic strong) mild solution $\{X_{s,x}(t)\}_{t\in [s,T]}\in \mathcal{C}^p([s,T];H)\cap\mathcal{C}^p((s,T];E)$ for every $p\geq 2$. Moreover for every $(s,t)\in\Delta$ and $x,z\in E$ we have
{\footnotesize
\begin{align}
&\norm{X_{s,x}(t)}_{H}\leq e^{\zeta (t-s)}\norm{x}_{H}+3\int_s^te^{\zeta (t-r)}\left[\norm{F(r,Z_s(r))}_{H}+\max\{0,\zeta\}
\norm{Z_s(r)}_{H}\right]dr +\norm{Z_s(t)}_{H},\;\; \mathbb{P}\mbox{-a.s.}\label{stindXX}\\
&\norm{X_{s,x}(t)}_E\leq e^{\zeta (t-s)}\norm{x}_E+3\int_s^te^{\zeta (t-r)}\left[\norm{F(r,Z_s(r))}_E+\max\{0,\zeta\}\norm{Z_s(r)}_E\right]dr+\norm{Z_s(t)}_E,\;\;\mathbb{P}\mbox{-a.s.}\label{stindEX}	\\
&\norm{X_{s,x}(t)-X_{s,z}(t)}_{H}\leq  e^{\zeta (t-s)}\norm{x-z}_{H},\;\;\mathbb{P}\mbox{-a.s.}\label{lipXX}\\
&\norm{X_{s,x}(t)-X_{s,z}(t)}_E\leq  e^{\zeta (t-s)}\norm{x-z}_E,\;\;\mathbb{P}\mbox{-a.s.}\label{lipEX}
\end{align}
}
where $\zeta$ is the constant defined in Hypothesis \ref{EU2}(\ref{EU2.2}).
\end{thm}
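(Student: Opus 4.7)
The plan is to reduce the stochastic problem \eqref{stocpbintro} to the deterministic problem \eqref{eq} via the pathwise change of variable
\[
Y_{s,x}(t,\omega) := X_{s,x}(t,\omega) - Z_s(t,\omega),\qquad t\in[s,T],
\]
which formally recasts \eqref{stocpbintro} as
\[
Y_{s,x}(t) = U(t,s)x + \int_s^t U(t,r)\, F\bigl(r, Y_{s,x}(r)+Z_s(r)\bigr)\,dr,
\]
i.e.\ exactly the mild formulation of \eqref{eq} with external forcing $f = Z_s(\cdot,\omega)$. The inverse transformation $X_{s,x} = Y_{s,x} + Z_s$ recovers a mild solution of \eqref{stocpbintro}, so existence, uniqueness and the pathwise estimates will be transported from Theorem \ref{proyos}.

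First I would fix $\omega$ in a set of full measure on which $Z_s(\cdot,\omega)\in C([s,T];E)$ and $t\mapsto F(t,Z_s(t,\omega))$ is integrable in $E$; both properties are provided by Hypothesis \ref{Hyp-F} together with the integrability clause of Hypothesis \ref{EU2}. On such $\omega$ Theorem \ref{proyos}, applied with $f=Z_s(\cdot,\omega)$, produces a unique $y_{s,x}(\cdot,\omega)\in C([s,T];H)\cap C_b((s,T];E)$ satisfying the deterministic estimates \eqref{stiX}--\eqref{lipE}. Setting $X_{s,x}(t,\omega) := y_{s,x}(t,\omega)+Z_s(t,\omega)$, a direct substitution shows that $X_{s,x}$ solves \eqref{stocpbintro} in the mild sense; uniqueness is immediate, since if $X^1,X^2$ are two mild solutions of \eqref{stocpbintro}, then $X^i-Z_s$ are two mild solutions of \eqref{eq} with the same data and must coincide by Theorem \ref{proyos}.

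Measurability and adaptedness of the resulting process will be the main technical obstacle, because the construction in Theorem \ref{proyos} passes through several limits. The cleanest route I can see is to observe that for every $n\in\N$ and $k>\zeta$ the approximate process $y^{k,n}_{s,x}$ of Proposition \ref{soldt} is obtained from the continuous input $Z_s$ by a Banach contraction whose Picard iterates are jointly Borel in $(t,\omega)$ and adapted to the Wiener filtration whenever $Z_s$ is; the almost-sure limits \eqref{convdt} and \eqref{convdX} in Proposition \ref{sold} and Theorem \ref{proyos} then preserve these properties, so that $\{X_{s,x}(t)\}_{t\in[s,T]}$ is itself adapted and is indeed a probabilistic strong solution.

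Finally, the pathwise bounds \eqref{stindXX}--\eqref{stindEX} follow by substituting $X_{s,x}=y_{s,x}+Z_s$ into \eqref{stiX}--\eqref{stiE} and using the triangle inequality, while \eqref{lipXX}--\eqref{lipEX} follow directly from \eqref{lipX}--\eqref{lipE} because the $Z_s$-terms cancel in the difference $X_{s,x}-X_{s,z}$. To upgrade these to the $\mathcal{C}^p([s,T];H)\cap\mathcal{C}^p((s,T];E)$-regularity, I would take the supremum over $t\in[s,T]$ in \eqref{stindXX} and \eqref{stindEX} and then the $p$-th moment; the resulting right-hand side is controlled by the finite quantities $\mathbb{E}\sup_{t\in[s,T]}\norm{Z_s(t)}_E^p$ and $\mathbb{E}\sup_{t\in[s,T]}\norm{F(t,Z_s(t))}_E^p$, both guaranteed by Hypothesis \ref{Hyp-F}, and the bound in $H$ follows by the continuous embedding $E\hookrightarrow H$.
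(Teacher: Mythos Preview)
Your proposal is correct and follows essentially the same route as the paper: the pathwise change of variable $X_{s,x}=Y_{s,x}+Z_s$ reducing to Theorem~\ref{proyos}, adaptedness via Picard iterates for the doubly-approximated $y^{k,n}_{s,x}$ followed by the limits \eqref{convdt}--\eqref{convdX}, uniqueness by subtracting $Z_s$ and invoking deterministic uniqueness, and the estimates \eqref{stindXX}--\eqref{lipEX} by transporting \eqref{stiX}--\eqref{lipE} through the triangle inequality. The paper spells out the Picard scheme $Y^{k,n,m}_{s,x}$ explicitly, but your description of it is accurate.
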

\begin{proof}
Fix $x\in E$ and $s\in [0,T)$. Fix an $H$-cylindrical Wiener process $\{W(t)\}_{t\in[0,T]}$ defined on a complete filtered probability space $(\Omega,\mathcal{F},\{\mathcal{F}_t\}_{t\in[0,T]},\mathbb{P})$. By Hypothesis \ref{Hyp-F}, there exists $\Omega_0\in \mathcal{F} $ such that $\mathbb{P}(\Omega_0)=1$ and for every $\omega\in\Omega_0$ the function $t\rightarrow Z_s(t)(\omega)$ is continuous from $[s,T]$ to $E$.\\

\noindent\textbf{Existence} Let $\{Y_{s,x}(t)\}_{t\in [s,T]}$ be the process defined for every $\omega\in\Omega_0$  by 
\begin{equation*}
Y_{s,x}(\cdot)(\omega):=y_{s,x}(\cdot),
\end{equation*}
where $y_{s,x}(\cdot)$ is the unique mild solution to \eqref{eq} with $f(\cdot):=Z_s(\cdot)(\omega)$ given by Theorem \ref{proyos}. Of course the process $\{X_{s,x}(t)\}_{t\in [s,T]}$ defined by
\begin{equation*}
X_{s,x}(t):=Y_{s,x}(t)+Z_s(t),
\end{equation*}
solves the mild form of \eqref{stocpbintro} $\mathbb{P}$-a.s. and, by Theorem \ref{proyos}, it verifies \eqref{stindXX}, \eqref{stindEX}, \eqref{lipXX}, \eqref{lipEX} and it belongs to $\mathcal{C}^p([s,T];H)\cap\mathcal{C}^p((s,T];E)$. Hence if we prove that $\{Y_{s,x}(t)\}_{t\in [s,T]}$ is adapted to $\{\mathcal{F}_t\}_{t\in[0,T]}$ then also $\{X_{s,x}(t)\}_{t\in [s,T]}$ is adapted to $\{\mathcal{F}_t\}_{t\in[0,T]}$ and the proof of existence is concluded. 

For every $n,k\in\N$ and $k>\zeta$, we study the following SPDE
\begin{align*}
\begin{cases}
\dfrac{d}{dt}Y^{k,n}_{s,x}(t)=A_n(t)Y^{k,n}_{s,x}(t)+F_k(t,Y^{k,n}_{s,x}(t)+Z_s(t)), & (s,t)\in\Delta,\\
Y^{k,n}_{s,x}(s)=x\in E,
\end{cases}
\end{align*}
where $F_k$ and $A_n(t)$ are the Yosida approximations of $F$ and $A(t)$ (see Section \ref{sec:Yosida}). By Proposition \ref{soldt} and the same arguments used above, there exists a unique $E$-valued continuous process $\{Y^{k,n}_{s,x}(t)\}_{\in  [s,T]}$ such that
\[
Y^{k,n}_{s,x}(t)=x+\int_s^tA_n(r)Y^{k,n}_{s,x}(r)dr+ \int_s^tF(r,Y^{k,n}_{s,x}(r)+Z_s(r))dr,\qquad \mathbb{P}\mbox{-a.s.}
\]
Moreover by \eqref{convdt} and \eqref{convdX} we have
\begin{equation}\label{convnk}
\lim_{k\rightarrow +\infty}\lim_{n\rightarrow +\infty}\norm{Y^{k,n}_{s,x}(t)-Y_{s,x}(t)}_H=0,\qquad \mathbb{P}\mbox{-a.s.},
\end{equation}
and if $\{Y^{k,n}_{s,x}(t)\}_{t\in [s,T]}$ is adapted to $\{\mathcal{F}_t\}_{t\in[0,T]}$ then also $\{Y_{s,x}(t)\}_{t\in [s,T]}$ is adapted to $\{\mathcal{F}_t\}_{t\in[0,T]}$. For every $m\in\N$ we consider the processes $\{Y^{k,n,m}_{s,x}(t)\}_{t\in [0,T]}$ defined, for every $t\in [0,T]$, by
\begin{align*}
\begin{cases}
&Y^{k,n,0}_{s,x}(t)=x,\\
&\displaystyle{Y^{k,n,m+1}_{s,x}(t)=x+\int_s^tA_n(r)Y^{k,n,m}_{s,x}(r)dr+ \int_s^tF(r,Y^{k,n,m}_{s,x}(r)+Z_s(r))ds},\qquad m>1.
\end{cases}
\end{align*}

By construction $\{Y^{k,n,m}_{s,x}(t)\}_{t\in [s,T]}$ is adapted to $\{\mathcal{F}_t\}_{t\in[0,T]}$ for every $m\in\N$. Moreover since $F_k(t,\cdot):H\longrightarrow H$ is Lipschitz continuous and $A_n(t)\in\mathscr{L}(H)$, using the classical Picard iteration scheme it is not difficult to show that
\[
\lim_{m\rightarrow +\infty}\norm{Y^{k,n,m}_{s,x}(t)-Y^{k,n}_{s,x}(t)}_H=0,\qquad \mathbb{P}\mbox{-a.s.},
\]
and so $\{Y^{k,n}_{s,x}(t)\}_{t\in [s,T]}$ is adapted to $\{\mathcal{F}_t\}_{t\in[0,T]}$ and $\{Y_{s,x}(t)\}_{t\in[s,T]}$ is adapted by \eqref{convnk}. \\

\noindent\textbf{Uniqueness}\  If $\{X^{(1)}_{s,x}(t)\}_{t\in [s,T]}$ and $\{X^{(2)}_{s,x}(t)\}_{t\in [s,T]}$ are two mild solutions to \eqref{stocpbintro} defined on a complete and filtered probability space $(\Omega,\mathcal{F},\{\mathcal{F}_t\}_{t\in[0,T]},\mathbb{P})$, then for every $\omega\in\Omega_0$ both functions $t\in [s,T]\longrightarrow X^{(1)}_{s,x}(\omega)(t)-Y_s(\omega)(t)$ and $t\in [s,T]\longrightarrow X^{(2)}_{s,x}(\omega)(t)-Y_s(\omega)(t)$ are mild solutions to \eqref{eq} with $f=Z_s(\omega)$, so by the result of uniqueness proven in Theorem \ref{proyos} we have
\[
\sup_{t\in [s,T]}\norm{X^{(1)}_{s,x}(t)-X^{(2)}_{s,x}(t)}_H=0,\qquad \mathbb{P}\mbox{-a.s.}
\]
\end{proof}

\begin{remark}\label{rmkTE}
   Arguments similar to those in Remarks \ref{rmkE} and \ref{rmkT} are also valid for Theorem \ref{solMild}. In particular, about Remark \ref{rmkT}, we can replace $\{W(t)\}_{t\in [0,T]}$ with  $\{\overline{W}(t)\}_{t\in [-T,T]}$ defined by
   \[
   \overline{W}(t):=\begin{cases}
   W(t)\qquad &t\in [0,T]\\
   W'(-t)\qquad &t\in [-T,0)
   \end{cases},
   \]
   where $\{W'(t)\}_{t\in [0,T]}$ is another $H$-cylindrical Wiener process independent of $\{W(t)\}_{t\in [0,T]}$.
\end{remark}

Since $E$ is densely embedded in $H$, we are able to define a process $\{X_{s,x}(t)\}_{t\in [s,T]}$ for every $x\in H$ that is a solution to \eqref{stocpbintro} in the sense of Definition \ref{gensolmild}.

\begin{thm}\label{limmild}
Assume that Hypothesis \ref{Hyp-F} holds. For every $x\in H$ and $s\in [0,T)$ there exists a unique process $\{X_{s,x}(t)\}_{t\in [s,T]}\in\mathcal{C}^p([s,T];H)$ for every $p\geq 2$ such that for every $\{x_n\}_{n\in\N}\subseteq E$ converging to $x$ we have
\begin{align}\label{CX}
&\lim_{n{\rightarrow}+\infty}\mathbb{E}\left[\sup_{t\in [s,T]}\norm{X_{s,x_n}(t)-X_{s,x}(t)}_{H}\right]=0,\qquad \mathbb{P}\mbox{-a.s.},
\end{align}
where $\{X_{s,x_n}(t)\}_{t\in[s,T]}$ is the unique mild solution of \eqref{stocpbintro} with initial datum $x_n$. In addition for every $t\in [s,T]$ and $x,z\in {H}$ we have
{\footnotesize
\begin{align}
&\norm{X_{s,x}(t)}_{H}\leq e^{\zeta (t-s)}\norm{x}_{H}+3\int_s^te^{\zeta (t-r)}\left[\norm{F(r,Z_s(r))}_{H}+\max\{0,\zeta\}\norm{Z_s(r)}_{H}\right]dr+\norm{Z_s(t)}_H,\;\;\mathbb{P}\mbox{-a.s.}\label{SX}\\
&\norm{X_{s,x}(t)-X_{s,z}(t)}_{H}\leq  e^{\zeta (t-s)}\norm{x-z}_{H},\;\;\mathbb{P}\mbox{-a.s.}\label{LX},
\end{align}
}
where $\zeta$ is the constant defined in Hypothesis \ref{EU2}(\ref{EU2.2}).
\end{thm}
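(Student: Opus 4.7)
The plan is to extend the well-posedness theory from $x \in E$ to $x \in H$ by a density plus Lipschitz extension argument, exploiting the pathwise contraction estimate \eqref{lipXX} already established in Theorem \ref{solMild}.

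First, since $E$ is densely embedded in $H$ (Hypothesis \ref{EU2}(i)), I would fix $x \in H$ and choose any sequence $\{x_n\}_{n\in\N} \subseteq E$ with $x_n \to x$ in $H$. By Theorem \ref{solMild}, for each $n$ there exists a mild solution $\{X_{s,x_n}(t)\}_{t\in[s,T]} \in \mathcal{C}^p([s,T];H)$. Applying \eqref{lipXX} to the difference of two such solutions gives the pathwise bound $\norm{X_{s,x_n}(t) - X_{s,x_m}(t)}_H \leq e^{\zeta(t-s)} \norm{x_n - x_m}_H$, $\mathbb{P}$-a.s., for all $t\in[s,T]$. Taking the supremum in $t$, raising to the $p$-th power and taking expectation yields
\begin{equation*}
\mathbb{E}\left[\sup_{t\in[s,T]}\norm{X_{s,x_n}(t) - X_{s,x_m}(t)}_H^p\right]^{1/p} \leq e^{|\zeta| T} \norm{x_n - x_m}_H.
\end{equation*}
Since $\{x_n\}$ is Cauchy in $H$, the sequence $\{X_{s,x_n}\}$ is Cauchy in the Banach space $\mathcal{C}^p([s,T];H)$, hence converges to a limit that I denote $\{X_{s,x}(t)\}_{t\in[s,T]}$. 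Adaptedness of the limit to $\{\mathcal{F}_t\}_{t\in[0,T]}$ is inherited from each $X_{s,x_n}$.

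Next I would verify that the limit is independent of the approximating sequence. If $\{y_n\}\subseteq E$ is another sequence converging to $x$ in $H$, then \eqref{lipXX} gives $\norm{X_{s,x_n}(t) - X_{s,y_n}(t)}_H \leq e^{\zeta(t-s)} \norm{x_n - y_n}_H \to 0$ in $\mathcal{C}^p([s,T];H)$, so the two limits coincide. This simultaneously establishes existence of $X_{s,x}$ as required and the uniqueness statement: any other process satisfying the convergence property \eqref{CX} must equal $X_{s,x}$, since both are limits of the same sequence $\{X_{s,x_n}\}$.

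Finally I would pass to the limit in the estimates \eqref{stindXX} and \eqref{lipXX} to obtain \eqref{SX} and \eqref{LX}. Since convergence in $\mathcal{C}^p$ implies that, along a subsequence, $\sup_{t\in[s,T]}\norm{X_{s,x_{n_j}}(t)-X_{s,x}(t)}_H \to 0$ $\mathbb{P}$-a.s., I can extract a common full-measure event on which the pathwise inequalities \eqref{stindXX} and \eqref{lipXX} pass to the limit pointwise in $t$, using continuity in $H$ of the right-hand sides with respect to $x_n \to x$ in $H$ (the terms involving $Z_s$ are independent of $x$). This yields \eqref{SX} and \eqref{LX}. The main technical delicacy, though not a deep obstacle, is keeping track of the fact that the Lipschitz estimate \eqref{lipXX} is pathwise rather than only in moments: this is what upgrades Cauchyness of $\{x_n\}$ in $H$ directly to Cauchyness of $\{X_{s,x_n}\}$ in $\mathcal{C}^p([s,T];H)$ for every $p \geq 2$ simultaneously, and is also what allows the estimate \eqref{SX} to hold almost surely rather than merely in expectation.
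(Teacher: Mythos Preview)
Your proposal is correct and follows essentially the same approach as the paper's proof: density of $E$ in $H$, the pathwise Lipschitz estimate \eqref{lipXX} to obtain Cauchyness of $\{X_{s,x_n}\}$ in $\mathcal{C}^p([s,T];H)$, independence of the approximating sequence, and passage to the limit in \eqref{stindXX} and \eqref{lipXX}. Your write-up is in fact more detailed than the paper's, which compresses all of this into a few lines.
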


\begin{proof}
\noindent Since $E$ is dense in $H$, for every $x\in{H}$ there exists a sequence $\{x_n\}_{n\in \N}\subseteq E$ such that
\[
\lim_{n{\rightarrow}+\infty} \norm{x_n-x}_{H}=0.
\]
By \eqref{lipXX}, for every $s\in [0,T)$ and $n,m\in\N$, we have
\[
\lim_{n,m{\to}+\infty}\mathbb{E}\left[\sup_{t\in [s,T]}\norm{X_{s,x_{n}}(t)-X_{s,x_{m}}(t)}_{H}\right]=0,\quad \mathbb{P}\mbox{-a.s.},
\]
we denote by $\{X_{s,x}(t)\}_{t\in [s,T]}$ the limit of $\{X_{s,x_n}(t)\}_{t\in [s,T]}$. We underline that by \eqref{lipXX} the limit $\{X_{s,x}(t)\}_{t\in [s,T]}$ does not depend on the sequence $\{x_n\}_{n\in \N}$, so it is the unique $H$-valued continuous process that verifies \eqref{CX}. Finally, \eqref{stindXX} and \eqref{lipXX} yield \eqref{SX} and \eqref{LX}.
\end{proof}

\begin{defn}\label{gensolmild}
For every $x\in{H}$ we call generalized mild solution to \eqref{stocpbintro} the limit $\{X_{s,x}(t)\}_{t\in [s,T]}$ given by Proposition \ref{limmild}. 
\end{defn}

Using the previous strong well-posedness results, we can define the transition Markov evolution operator associated with \eqref{stocpbintro} both in $B_b(H)$ and $B_b(E)$.\\
Assume that Hypothesis \ref{Hyp-F} holds. Let $x\in H$, $s\in [0,T)$ and let $\{X_{s,x}(t)\}_{(s,t)\in\overline{\Delta}}$ be the generalized mild solution to \eqref{stocpbintro} given by Theorems \ref{solMild} and \ref{limmild}. We define the families of operators $\{P_{s,t}\}_{(s,t)\in\overline{\Delta}}$ and $\{P_{s,t}^E\}_{(s,t)\in\overline{\Delta}}$ given by
\begin{align*}
    & P_{s,t}\varphi(x):=\mathbb{E}\left[\varphi\left(X_{s,x}(t)\right)\right],\qquad 0\leq (s,t)\in\Delta,\, x\in H,\, \varphi\in B_b(H)\\
    & P^E_{s,t}\varphi(x):=\mathbb{E}\left[\varphi\left(X_{s,x}(t)\right)\right],\qquad 0\leq (s,t)\in\Delta,\, x\in E,\, \varphi\in B_b(E)
\end{align*}
By the same arguments of \cite[Proposition 9.14 and Corollary 9.15]{dap_zab_2014}, taking into account \eqref{lipEX},\eqref{LX} and the fact that $E$ is continuously embedded in $H$, we get the following result.

\begin{proposition}
Assume that Hypothesis \ref{Hyp-F} holds. $\{P_{s,t}\}_{(s,t)\in\overline\Delta}$ and $\{P^E_{s,t}\}_{(s,t)\in\overline\Delta}$ are two contraction positive and Feller evolution operators on $B_b(H)$ and $B_b(E)$ respectively. Moreover, for every $\varphi\in B_b(H)$ and $x\in E$ we have
\[
P_{s,t}\varphi(x)=P^E_{s,t}\varphi(x),\ \ \ (s,t)\in\overline{\Delta}.
\] 
\end{proposition}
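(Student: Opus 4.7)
The plan is to verify each claimed property in turn, leveraging the estimates already established in Theorems \ref{solMild} and \ref{limmild} and following the scheme of the autonomous argument in \cite[Propositions 9.14--9.15]{dap_zab_2014}. The contraction, positivity, and the $s=t$ identity are immediate: for $\varphi\in B_b(H)$ and $x\in H$ one has
\[
|P_{s,t}\varphi(x)|\leq \mathbb{E}\big[\|\varphi\|_\infty\big]=\|\varphi\|_\infty,
\]
and $\varphi\geq 0$ gives $P_{s,t}\varphi\geq 0$; moreover $X_{s,x}(s)=x$ yields $P_{s,s}\varphi=\varphi$. The same reasoning applies to $\{P^E_{s,t}\}$ on $B_b(E)$.

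The nontrivial ingredient of the evolution structure is the Markov cocycle property $X_{s,x}(t)=X_{r,X_{s,x}(r)}(t)$ $\mathbb{P}$-a.s., valid for $s\leq r\leq t$, from which the Chapman--Kolmogorov equation $P_{s,t}\varphi=P_{s,r}(P_{r,t}\varphi)$ follows by conditioning on $\mathcal{F}_r$. For $x\in E$ this cocycle identity is obtained by decomposing the stochastic convolution as $Z_s(t)=U(t,r)Z_s(r)+Z_r(t)$ (via the evolution property of $U$ and the independence of $\{W(u)-W(r)\}_{u\geq r\}}$ from $\mathcal{F}_r$) and then invoking pathwise uniqueness from Theorem \ref{solMild}: both $t\mapsto X_{s,x}(t)$ and $t\mapsto X_{r,X_{s,x}(r)}(t)$ satisfy, on $[r,T]$, the same mild equation driven by $Z_r$ with initial datum $X_{s,x}(r)$. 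The extension to $x\in H$ follows by approximation with $x_n\in E$ and the pathwise Lipschitz bound \eqref{LX}, which allows passage to the limit in the cocycle identity $\mathbb{P}$-a.s.

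For the Feller property, if $\varphi\in C_b(H)$ and $x_n\to x$ in $H$, the estimate \eqref{LX} gives $X_{s,x_n}(t)\to X_{s,x}(t)$ in $H$ pathwise $\mathbb{P}$-a.s., so dominated convergence yields $P_{s,t}\varphi(x_n)\to P_{s,t}\varphi(x)$; the analogous argument with \eqref{lipEX} proves the Feller property of $P^E_{s,t}$ on $C_b(E)$. Finally, for the coincidence formula with $x\in E$ and $\varphi\in B_b(H)$: the generalized mild solution defining $P_{s,t}\varphi(x)$ is constructed in Theorem \ref{limmild} as an $H$-limit along any $E$-approximation of $x$; choosing the stationary sequence $x_n\equiv x$ shows it coincides $\mathbb{P}$-a.s. with the $E$-valued mild solution defining $P^E_{s,t}\varphi(x)$ (using $E\hookrightarrow H$ continuously so that $\varphi|_E\in B_b(E)$), and taking expectations gives the identity. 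The main delicate step will be verifying the cocycle identity in the generalized setting, since the stochastic convolution has to be split correctly in terms of the $\mathcal{F}_r$-conditional law and this split must be transported through the nonlinear fixed-point construction; everything else reduces to applying the Lipschitz-in-initial-datum estimates already proved.
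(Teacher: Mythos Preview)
Your proposal is correct and follows precisely the approach indicated in the paper: the paper itself does not give a detailed proof but simply refers to the arguments of \cite[Proposition 9.14 and Corollary 9.15]{dap_zab_2014} together with the Lipschitz estimates \eqref{lipEX} and \eqref{LX} and the continuous embedding $E\hookrightarrow H$, which is exactly what you have spelled out. Your identification of the cocycle identity as the only nontrivial step, and your handling of it via the decomposition of the stochastic convolution plus pathwise uniqueness, is the standard route taken in that reference.
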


\subsection{Regularity of stochastic convolution}\label{sec:regulariti-stoc-conv}
Let $T>0$ and let $\mathcal{O}$ be a bounded open subset of $\R^d$ with at least Lipschitz boundary. Let $\{W(t)\}_{t\in [0,T]}$ be an $L^2(\mathcal{O})$-cylindrical Wiener process defined by
\begin{equation}\label{W-cil}
    W(t):=\sum_{k\in\N}\beta_k(t)e_k,
\end{equation}
where $\{e_k\}_{k\in\N}$ is an orthonormal basis of $L^2(\mathcal{O})$ and $\beta_1,\beta_2,\ldots$ are real independent Brownian motions.

In this subsection, we will prove the space-time continuity of the stochastic convolution process. To do so we perform a non-autonomous version of the method presented in \cite[Section 2.2.2]{dap_2004}.  For every $p\geq 1$ and $\varepsilon> 0$, we denote by $W^{\varepsilon,p}(\mathcal{O})$ the classical fractional Sobolev space with respect to the Lebesgue measure. 

\begin{hypothesis}\label{hyp: Uts}
Assume that the following conditions hold.
\begin{enumerate}[{\rm (i)}]
\item $\{U(t,s)\}_{(s,t)\in\overline{\Delta}}$ is a strongly continuous evolution operator on $H$.
\item For all $p>2$, $\{U(t,s)\}_{(s,t)\in\overline{\Delta}}$ extends to a strongly continuous evolution operator on $L^p(\mathcal O)$.
\item There exist $\eta\geq1$ such that for all $\varepsilon\in(0,\eta/2)$ and $p\geq 2$ there exists $C_{\varepsilon, p}>0$ such that 
\begin{equation}\label{normasobF}
\norm{U(t,s)f}_{W^{\varepsilon,p}(\mathcal O)}\leq \frac{C_{\varepsilon,p}}{(t-s)^{\frac{\varepsilon}{\eta}}}\norm{f}_{L^{p}(\mathcal O)},\ \ (s,t)\in\Delta,\ p\geq 2.
\end{equation}
\item $\{B(t)\}_{t\in[0,T]}$ is a family of linear bounded operators on $L^2(\mathcal{O})$ and for every $f\in L^2(\mathcal{O})$ the mapping $t\in[0,T]\longmapsto B(t)f\in L^2(\mathcal{O})$ is Borel measurable.
\end{enumerate}
\end{hypothesis}

We start by the following lemma that is a generalization of \cite[Lemma 2.12]{dap_2004}.

\begin{lemma}\label{beppe_convoluzione}
Assume that Hypothesis \ref{hyp: Uts} holds. Let $T>0$, $s<T$, $\alpha\in \left(0,1\right)$ and $p>\max\{\frac{2}{\alpha},\frac{2d}{\eta\alpha}\}$. Let $f\in L^{p}([s,T]\times\mathcal O)$ and we set
 \begin{align}
 F_s(t,\xi)=\int_s^tU(t,r)(t-r)^{\alpha-1}f(r,\xi)\,dr,\ \ t\in[s,T],\ \xi\in\mathcal O.
 \end{align}
Then $F_s\in C([s,T]\times \mathcal O)$ and there exists $C=C(p,T,\alpha,\eta)>0$ such that
\begin{align}\label{stimasupF}
\sup_{(t,\xi)\in [s,T]\times\mathcal O} \abs{F(t,\xi)}^{p}\leq C\norm{f}^p_{L^{p}([s,T]\times\mathcal O)}. 
\end{align}
\end{lemma}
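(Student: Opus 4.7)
The strategy is to combine the smoothing estimate \eqref{normasobF} with the Sobolev embedding $W^{\varepsilon,p}(\mathcal O)\hookrightarrow C(\overline{\mathcal O})$ valid when $\varepsilon>d/p$. The first step is to pick $\varepsilon\in(0,\eta/2)$ satisfying the two conditions $d/p<\varepsilon$ and $\varepsilon/\eta<\alpha-1/p$. The condition $p>\max\{2/\alpha,2d/(\eta\alpha)\}$ is precisely what guarantees that such $\varepsilon$ exists: it amounts to $p>(d+\eta)/(\eta\alpha)$, which one verifies by splitting into the cases $d\leq\eta$ (giving $(d+\eta)/(\eta\alpha)\leq 2/\alpha$) and $d\geq\eta$ (giving $(d+\eta)/(\eta\alpha)\leq 2d/(\eta\alpha)$), and the constraint $\varepsilon<\eta/2$ is compatible because $p>2d/(\eta\alpha)>2d/\eta$ already forces $d/p<\eta/2$.

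For the quantitative bound, Minkowski's integral inequality in the Banach space $W^{\varepsilon,p}(\mathcal O)$ combined with \eqref{normasobF} gives
\[
\norm{F_s(t,\cdot)}_{W^{\varepsilon,p}(\mathcal O)}\leq C_{\varepsilon,p}\int_s^t(t-r)^{\alpha-1-\varepsilon/\eta}\norm{f(r,\cdot)}_{L^p(\mathcal O)}\,dr.
\]
H\"older's inequality with conjugate exponent $q=p/(p-1)$ then yields
\[
\norm{F_s(t,\cdot)}_{W^{\varepsilon,p}(\mathcal O)}\leq C_{\varepsilon,p}\left(\int_s^t(t-r)^{q(\alpha-1-\varepsilon/\eta)}\,dr\right)^{1/q}\norm{f}_{L^p([s,T]\times\mathcal O)},
\]
and the choice $\varepsilon/\eta<\alpha-1/p$ is exactly the condition $q(\alpha-1-\varepsilon/\eta)>-1$, so the temporal integral is finite and uniformly bounded in $t\in[s,T]$ by a constant depending only on $T,\alpha,\eta,p$. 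Composing with the Sobolev embedding $W^{\varepsilon,p}(\mathcal O)\hookrightarrow C(\overline{\mathcal O})$ gives \eqref{stimasupF}.

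For the continuity claim, I would argue by density. On the dense subspace $C([s,T]\times\overline{\mathcal O})\subseteq L^p([s,T]\times\mathcal O)$ the map $(t,\xi)\mapsto F_s(t,\xi)$ is jointly continuous: continuity in $\xi$ follows because $F_s(t,\cdot)\in W^{\varepsilon,p}(\mathcal O)\hookrightarrow C(\overline{\mathcal O})$ for every $t$, while continuity in $t$ is obtained by splitting the difference $F_s(t',\xi)-F_s(t,\xi)$ (say $t<t'$) into a boundary piece on $[t,t']$, controlled by the integrable singularity $(t'-r)^{\alpha-1-\varepsilon/\eta}$ and the boundedness of $f$, plus a bulk piece on $[s,t]$ in which one rewrites $U(t',r)=U(t',t)U(t,r)$, uses the strong continuity of $U(t',t)x\to x$ as $t'\searrow t$, and passes to the limit by dominated convergence. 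Finally, \eqref{stimasupF} says that $f\mapsto F_s$ extends continuously from $L^p([s,T]\times\mathcal O)$ into $L^\infty([s,T]\times\overline{\mathcal O})$, and since $C([s,T]\times\overline{\mathcal O})$ is closed in this space, the limit $F_s$ inherits joint continuity.

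The main obstacle I anticipate lies in the joint continuity argument for continuous $f$: combining the strong continuity of the evolution operator with the Sobolev embedding requires some care to control $|F_s(t',\xi')-F_s(t,\xi)|$ uniformly as $(t',\xi')\to(t,\xi)$, especially at $t=s$ and because $U(t,s)$ is only strongly (not uniformly) continuous. Once this point is in place, the density extension and the quantitative estimate are essentially routine.
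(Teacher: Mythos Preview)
Your proof is correct and follows essentially the same route as the paper: smoothing estimate \eqref{normasobF} plus H\"older in time plus the Sobolev embedding $W^{\varepsilon,p}(\mathcal O)\hookrightarrow C(\overline{\mathcal O})$. The only cosmetic difference is that the paper picks the concrete value $\varepsilon=\tfrac{1}{2}\alpha\eta$, which immediately gives the time singularity $(t-r)^{-(1-\alpha/2)}$ and makes the two conditions $p>2/\alpha$ (for H\"older) and $p>2d/(\alpha\eta)$ (for the embedding) transparent, whereas you argue abstractly for the existence of a suitable $\varepsilon$; your density argument for joint continuity is a welcome addition, since the paper's proof only writes out the sup bound and leaves $F_s\in C([s,T]\times\mathcal O)$ implicit.
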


\begin{proof}
Let $\varepsilon=\frac{1}{2}\alpha \eta$. Since $\varepsilon\in (0,\eta/2)$, by \eqref{normasobF} we have
\begin{align*}
\norm{F_s(t,\cdot)}_{W^{\varepsilon,p}(\mathcal O)}&\leq\int_s^t(t-r)^{\alpha-1}\norm{U(t,r)f(r,\cdot)}_{W^{\varepsilon,p}(\mathcal O)}\,dr\\
&\leq C_{\varepsilon, p} \int_s^t \frac{\norm{f(r,\cdot)}_{L^{p}(\mathcal O)}}{(t-r)^{\frac{\varepsilon}{\eta}+1-\alpha}}\,dr\leq C_{\varepsilon, p} \int_s^t \frac{\norm{f(r,\cdot)}_{L^{p}(\mathcal O)}}{(t-r)^{1-\frac{\alpha}{2}}}\,dr\\
&=C_{\varepsilon, p} \int_s^t\frac{1}{(t-r)^{1-\frac{\alpha}{2}}}\left(\int_\mathcal{O}\abs{f(r,\xi)}^pd\xi\right)^\frac{1}{p}dr.
\end{align*}
By the H\"older inequality we obtain
\begin{align*}
\norm{F_s(t,\cdot)}_{W^{\varepsilon,p}(\mathcal O)}&\leq C_{\varepsilon, p} \left(\int_s^t\frac{1}{(t-r)^{(1-\frac{\alpha}{2})\frac{p}{p-1}}}dr\right)^{\frac{p-1}{p}}\left(\int_{[s,T]\times\mathcal{O}}\abs{f(r,\xi)}^pd\xi dr\right)^{\frac{1}{p}}.
\end{align*}
Since $p>\frac{2}{\alpha}$ then $(1-\frac{\alpha}{2})p(p-1)^{-1}<1$, there exists a constant $C=C(p,T,\alpha,\eta)>0$  such that
\begin{align}
\norm{F_s(t,\cdot)}_{W^{\varepsilon,p}(\mathcal O)}\leq C\norm{f}_{L^{p}([s,T]\times\mathcal O)}.
\end{align} 
Since $p>\frac{2d}{\alpha\eta}$ and $\varepsilon=\frac{1}{2}\alpha \eta$ then $\varepsilon p>d$, and so \eqref{stimasupF} is a consequence of the Sobolev embedding.
\end{proof}

Now we can show a sufficient condition for the space-time regularity of the stochastic convolution process.

\begin{proposition}\label{Tempo-spazio}
Assume that Hypothesis \ref{hyp: Uts} holds true and that there exists $\alpha\in (0,\frac{1}{2})$ such that for every $(s,t)\in\Delta$ we have
\begin{align}\label{C-HS}
\sup_{\xi\in\overline{\mathcal{O}}}\sum_{k=1}^\infty\int_s^t(t-r)^{-2\alpha}\left[U(t,r)B(r) e_k\right]^2(\xi)\,dr<+\infty,
\end{align}
where $\{e_k\}_{k\in\N}$ is an orthonormal basis of $L^2(\mathcal{O})$.
Then for every $s\in [0,T)$, the stochastic convolution process $\{Z_s(t)\}_{t\in [s,T]}$ belongs to $\mathcal{C}^p([s,T],C(\overline{\mathcal{O}}))$, for every $p\geq 2$.
\end{proposition}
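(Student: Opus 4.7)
The plan is to use the factorization method in the spirit of Da Prato--Kwapie\'n--Zabczyk, adapted to the non-autonomous setting. Recall that for $\alpha\in(0,1/2)$ one has the identity
\begin{equation*}
\int_\sigma^t (t-r)^{\alpha-1}(r-\sigma)^{-\alpha}\,dr = \frac{\pi}{\sin(\pi\alpha)},\qquad s\leq\sigma<t\leq T.
\end{equation*}
Using the evolution property $U(t,r)U(r,\sigma)=U(t,\sigma)$ together with a stochastic Fubini argument, I would write
\begin{equation*}
Z_s(t) \;=\; \frac{\sin(\pi\alpha)}{\pi}\int_s^t (t-r)^{\alpha-1}\,U(t,r)Y_s(r)\,dr,
\end{equation*}
where the auxiliary process $\{Y_s(r)\}_{r\in[s,T]}$ is defined by
\begin{equation*}
Y_s(r) := \int_s^r (r-\sigma)^{-\alpha}\,U(r,\sigma)B(\sigma)\,dW(\sigma).
\end{equation*}
The whole point of introducing $Y_s$ is that once one shows $Y_s\in L^p([s,T]\times\mathcal{O})$ almost surely with good $L^p$-moments, Lemma \ref{beppe_convoluzione} delivers both the space-time continuity of $Z_s$ and the sup-bound in a single step.

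\textbf{Moment estimate on $Y_s$.} For fixed $r\in(s,T]$ and $\xi\in\overline{\mathcal{O}}$, the expansion of the cylindrical Wiener process \eqref{W-cil} and the It\^o isometry give
\begin{equation*}
\mathbb{E}\bigl[\,|Y_s(r)(\xi)|^2\,\bigr] \;=\; \sum_{k=1}^\infty \int_s^r (r-\sigma)^{-2\alpha}\bigl[U(r,\sigma)B(\sigma)e_k\bigr]^2(\xi)\,d\sigma,
\end{equation*}
which by hypothesis \eqref{C-HS} is bounded uniformly in $(r,\xi)\in[s,T]\times\overline{\mathcal{O}}$ by some constant $K$. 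Since $Y_s(r)(\xi)$ is a centred Gaussian random variable, all $L^q(\Omega)$-moments are controlled by the variance: $\mathbb{E}|Y_s(r)(\xi)|^q \leq c_q K^{q/2}$ for every $q\geq 1$. Fubini then yields, for any $p$ as in Lemma \ref{beppe_convoluzione},
\begin{equation*}
\mathbb{E}\,\|Y_s\|_{L^p([s,T]\times\mathcal{O})}^p \;=\; \int_s^T\!\!\int_\mathcal{O} \mathbb{E}|Y_s(r)(\xi)|^p\,d\xi\,dr \;\leq\; c_p K^{p/2}\,|\mathcal{O}|\,(T-s).
\end{equation*}

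\textbf{Conclusion via Lemma \ref{beppe_convoluzione}.} Choose $p>\max\{2/\alpha,2d/(\eta\alpha)\}$ so that the lemma is applicable. Working pathwise on the full-measure set where $Y_s(\cdot,\cdot)\in L^p([s,T]\times\mathcal{O})$, the lemma gives $Z_s\in C([s,T]\times\overline{\mathcal{O}})=C([s,T];C(\overline{\mathcal{O}}))$ and
\begin{equation*}
\sup_{t\in[s,T]}\,\|Z_s(t)\|_{C(\overline{\mathcal{O}})}^p \;\leq\; C\,\|Y_s\|_{L^p([s,T]\times\mathcal{O})}^p,\qquad \mathbb{P}\text{-a.s.}
\end{equation*}
Taking expectations and using the moment bound above yields $Z_s\in\mathcal{C}^p([s,T];C(\overline{\mathcal{O}}))$ for this particular $p$. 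Since a continuous process with a finite $p$-th moment of its supremum for some large $p$ automatically has finite $q$-th moments for every $q\in[2,p]$, and since $p$ can be taken arbitrarily large, the statement holds for every $p\geq 2$.

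\textbf{Main obstacles.} The non-trivial point is the stochastic Fubini step needed to justify the factorization identity for $Z_s$ in the non-autonomous framework; one must verify the integrability condition that licenses exchanging the deterministic and stochastic integrals, and here the evolution property of $U(t,s)$ (rather than the semigroup law) is essential. A second, more delicate point is the passage from the condition \eqref{C-HS}, which bounds the variance pointwise in $\xi$, to an $L^p([s,T]\times\mathcal{O})$ bound on $Y_s$: Gaussianity is what makes this cheap, and without it one would need a Hilbert--Schmidt type estimate of a very different flavour. Everything else is a routine application of Lemma \ref{beppe_convoluzione} and the Sobolev embedding that is already built into it.
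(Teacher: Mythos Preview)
Your proof is correct and follows essentially the same route as the paper: factorization formula for $Z_s$ in terms of the auxiliary process $Y_s$, pointwise Gaussianity of $Y_s(r)(\xi)$ with variance controlled by \eqref{C-HS} to get uniform moment bounds, and then Lemma~\ref{beppe_convoluzione} applied with $p>\max\{2/\alpha,\,2d/(\eta\alpha)\}$ to conclude. The paper presents the argument more tersely and does not dwell on the stochastic Fubini justification or on the passage from large $p$ to all $p\geq 2$, but the substance is identical.
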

\begin{proof}
Fix $s\in [0,T)$. Let $\alpha\in (0,\frac{1}{2})$ be the constant given by \eqref{C-HS}. By the factorization formula \cite{dap_kwa_zab_1987,seid_1993} we get
\begin{align}
Z_s(t)=\int_s^tU(t,r)B(r)\,dW(r)=\frac{\sin(\pi \alpha)}{\pi}\int_s^tU(t,\sigma)(t-\sigma)^{\alpha-1}Y(\sigma)\,d\sigma,
\end{align}
where, by \eqref{W-cil},
\begin{align}\label{esp-Y}
Y(\sigma)(\xi):=\int_s^\sigma U(\sigma,r)(\sigma-r)^{-\alpha}\,B(r)dW(r)(\xi)=\sum_{k\in\N}\int_s^\sigma (\sigma-r)^{-\alpha}\left[U(\sigma,r)B(r)e_k\right](\xi)d\beta_k(r).
\end{align}
By Lemma \ref{beppe_convoluzione} the statement holds if there exists $p>\max\{\frac{2d}{\alpha},\frac{2d}{\alpha\eta}\}$ such that
\begin{align}\label{jensen_0}
&\mathbb{E}\left[\int_s^t\int_\mathcal{O}\abs{Y(\sigma)(\xi)}^pd\xi d\sigma\right]<+\infty.
\end{align}
By \eqref{C-HS} and \eqref{esp-Y}, for every $\sigma\in[s,t]$ and $\xi\in\mathcal{O}$ the real random variable $Y(\sigma)(\xi)$ is Gaussian with mean $0$ and variance 
\[
\lambda_{s,\sigma,\xi}:=\sum_{k=1}^\infty\int_s^\sigma(\sigma-r)^{-2\alpha}\left[U(\sigma,r)B(r)e_k\right]^2(\xi)\,dr<+\infty.
\]
Hence, by \eqref{C-HS}, for every $p\geq 2$ there exists a constant $C_p:=C_p(T,\mathcal{O})$ such that 
\[
\mathbb{E}\left[\abs{Y(\sigma)(\xi)}^p\right]<C_p,\qquad \sigma\in [s,T],\, \xi\in\mathcal{O},
\]
and the statement follows.
\end{proof}

\section{Example}\label{sect:exa}
Let $d\in\N$. In this section we present a large class of operators $\{A(t)\}_{t\in[0,T]}$, $\{B(t)\}_{t\in[0,T]}$ and functions $F$ that verify all the assumptions of this paper in the case $H=L^2(\mathcal{O})$, where $\mathcal{O}$ is a bounded subset of $\R^d$ with regular boundary.

The main results of this section are summarized in the following theorem

\begin{thm}\label{esempio_teorema}
Let $T>0$ and let $d\in\N$. Assume the following conditions hold.
\begin{enumerate}[\rm(i)]
\item  $\mathcal O\subseteq\R^d$ is a bounded open set with $C^2$ boundary.
\item 
For every $t\in[0,T]$, we consider the operator $\mathcal A(t)$ given by 
\begin{equation}
\mathcal{A}(t)\varphi(\xi)=\sum_{i,j=1}^d a_{ij}(t,\xi)D_{ij}^2\varphi(\xi)+\sum_{i=1}^d a_{i}(t,\xi)D_{i}\varphi(\xi)+a_0(t,\xi)\varphi(\xi),\quad t\in[0,T],\ \xi\in \mathcal{O}
\end{equation}
and the boundary operator $\mathcal{C}(t)$ given by
\begin{align}\label{boundary2}
\mathcal{C}(t)u=
\begin{cases}
\begin{aligned}
&u \ \ \quad \quad\quad \quad \quad\quad\quad \quad \quad \quad\quad \quad \quad \quad \mbox{(Dirichlet)},  \\[1ex]
&\displaystyle{\sum_{i,j=1}^da_{ij}(t,\xi)D_i u\,\nu_j}\quad \quad\quad\quad \quad \quad\, \ \ \mbox{(Neumann)},\\[1ex] 
&\displaystyle{\sum_{i,j=1}^da_{ij}(t,\xi)D_i u\,\nu_j}+a_0(t,\xi)u\quad \quad \mbox{(Robin)},
\end{aligned}
\end{cases}
\end{align}
 where $\nu=(\nu_1,...,\nu_d)$ is the unit outer normal vector at the boundary of $\mathcal O$.
For every $t\in[0,T]$, the operator $A(t)$ is the realization in $L^2(\mathcal O)$ of $\mathcal A(t)$ with boundary operator $\mathcal C(t)$. We assume that
 \begin{itemize}
\item there exists $\rho\in(\frac{1}{2},1)$ such that for every $i,j=\{1,...,d\}$   $$a_{ij},b_0\in C_b^{\rho,1}\left(\R\times\overline{\mathcal{O}}\right),\ a_{i},a_0\in C_b^{\rho,0}\left(\R\times\overline{\mathcal{O}}\right);$$ 
\item there exist $\alpha_0>0$ such that 
\begin{align}
&\sum_{i,j=1}^da_{ij}(t,\xi)v_iv_j\geq \alpha_0\abs{v}^2,\ \ t\in[0,T],\ \xi\in\mathcal{O},\  v\in\R^d.
\end{align}
\end{itemize}
\item There exists $q\geq 2$, $q>d$ such that $B(t)\in\mathscr{L}(L^2(\mathcal{O});L^q(\mathcal{O}))$ for every $t\in [0,T]$ and
\begin{equation}\label{thm: condB}
\sup_{t\in[0,T]}\norm{B(t)}_{\mathscr L(L^2(\mathcal{O});L^q(\mathcal{O}))}<+\infty,\qquad t\in [0,T].
\end{equation}

\item $F:[0,T]\times E \longrightarrow H$ is the Nemytskii operator defined by the formula
\[
F(t,x)(\xi)=b(t,\xi,x(\xi)),\qquad t\in[0,T],\ x\in C(\overline{\mathcal{O}}),\ \xi\in \overline{\mathcal{O}},
\]
where $b:[0,T]\times\overline{\mathcal O}\times\R\longrightarrow\R$ is a polynomial function given by 
\begin{align}\label{def:b}
b(t,\xi,s)=-C_{2m+1}(t,\xi)s^{2m+1}+\sum_{k=0}^{2m}C_{k}(t,\xi)s^{k},\ \ \ t,s\in [0,T),\ \xi\in\overline{\mathcal{O}},
\end{align}
$m\in\N$ and $C_0,...,C_{2m+1}:[0,T]\times\overline{\mathcal{O}}\longrightarrow\R$ are continuous and bounded functions and there exists a constant $c>0$ such that $\displaystyle{\inf_{(t,\xi)\in [0,T]\times\overline{\mathcal{O}}}C_{2m+1}(t,\xi)>c}$.
\item We choose $H:=L^2(\mathcal O)$ and $E=C(\overline{\mathcal O})$ or $E=L^{2(2m+1)}(\mathcal O)$.
\end{enumerate}
Then all the assumptions of Theorems \ref{proyos}, \ref{solMild} and \ref{limmild} are satisfied.
\end{thm}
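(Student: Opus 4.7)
The plan is to verify, in order, the hypotheses that feed Theorems \ref{proyos}, \ref{solMild}, \ref{limmild}: first Hypothesis \ref{EU2} for both admissible choices of $E$; then the structural assumption \ref{Hyp1} (when $E=L^{2(2m+1)}(\mathcal{O})$) or \ref{Hyp2} (when $E=C(\overline{\mathcal{O}})$); and finally Hypothesis \ref{Hyp-F} via the regularity criterion of Proposition \ref{Tempo-spazio}. For the evolution operator $\{U(t,s)\}$, I would invoke the Acquistapace--Terreni framework: under the stated $C^{\rho,0}$/$C^{\rho,1}$ regularity of the coefficients, the uniform ellipticity constant $\alpha_0>0$ and the $C^{2}$ boundary, the realization of $\mathcal{A}(t)$ in $L^p(\mathcal{O})$ (for all $p\in[2,\infty)$) and in $C(\overline{\mathcal{O}})$ with boundary operator \eqref{boundary2} is sectorial with bounds uniform in $t$, so $\{A(t)\}_{t\in[0,T]}$ satisfies Hypothesis \ref{ATipotesi}. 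This unlocks Theorem \ref{thm23paolo}, the Yosida-approximation convergence required by Hypothesis \ref{EU2}(\ref{EU2.5}-c), and standard parabolic smoothing, in particular the Sobolev bound \eqref{normasobF} in Hypothesis \ref{hyp: Uts}. Dissipativity of each $A(t)$ in $L^2$ and $L^{2(2m+1)}$ would follow by integration by parts together with the sign of $a_0$ (after absorbing a constant into $\zeta$), and in $C(\overline{\mathcal{O}})$ from the parabolic maximum principle.

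Next I would verify Hypothesis \ref{EU2} for $F$. Boundedness of $F$ from bounded subsets of $E$ into bounded subsets of $H$ is immediate when $E=C(\overline{\mathcal{O}})$ and follows from $\|x^{k}\|_{L^{2(2m+1)/k}(\mathcal{O})}=\|x\|_{L^{2(2m+1)}(\mathcal{O})}^{k}$ together with $|\mathcal{O}|<\infty$ when $E=L^{2(2m+1)}(\mathcal{O})$, for every $k\leq 2m+1$. Time-continuity of $F_E$ reduces to the continuity of the coefficients $C_0,\dots,C_{2m+1}$. The crucial m-dissipativity of $F(t,\cdot)-\zeta I$ rests on the fact that $s\mapsto -C_{2m+1}(t,\xi)s^{2m+1}$ is strictly decreasing with derivative bounded above by $-(2m+1)c\, s^{2m}\leq 0$; picking $\zeta$ larger than the uniform local Lipschitz constant of the lower-order polynomial part absorbs the remaining terms in each of the three spaces, and surjectivity of $\lambda I-(F(t,\cdot)-\zeta I)$ is obtained by pointwise inversion in $C(\overline{\mathcal{O}})$ and by standard monotone-operator theory in $L^2$ and $L^{2(2m+1)}$.

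The case split is then handled as follows. For $E=C(\overline{\mathcal{O}})$ the polynomial structure of $b$ makes $F$ locally Lipschitz from $E$ to $E$ uniformly in $t\in[0,T]$, which is precisely Hypothesis \ref{Hyp2}. For $E=L^{2(2m+1)}(\mathcal{O})$ the space is reflexive; the fixed-point condition $F(t,x_0)=\zeta x_0$ is arranged by the shift $F\mapsto F-F(\cdot,0)$ (which still preserves the dissipative structure and only alters the free term $f$), so that $x_0=0$ works. The weak-continuity \eqref{convergenza-debole} is then verified by taking a bounded sequence $\{y_n\}\subseteq L^{2(2m+1)}(\mathcal{O})$ with $y_n\to y$ in $L^2(\mathcal{O})$, passing to an a.e.\ convergent subsequence, and applying Vitali's theorem to each monomial $y_n^{k}$ (using the uniform $L^{2(2m+1)/k}$ bound for $k\leq 2m+1$) to integrate against any test function $z\in L^{2(2m+1)}(\mathcal{O})$.

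The final and most delicate step is to reduce Hypothesis \ref{Hyp-F} to Proposition \ref{Tempo-spazio}, which further reduces to the estimate \eqref{C-HS}. For fixed $(r,t)\in\Delta$ and $\xi\in\overline{\mathcal{O}}$, the evaluation functional $f\in L^2(\mathcal{O})\mapsto [U(t,r)B(r)f](\xi)\in\R$ has norm controlled by
\begin{align*}
\|B(r)\|_{\mathscr{L}(L^2(\mathcal{O});L^q(\mathcal{O}))}\,\|U(t,r)\|_{\mathscr{L}(L^q(\mathcal{O});L^\infty(\mathcal{O}))}\;\leq\;C\,(t-r)^{-d/(2q)},
\end{align*}
by the classical parabolic $L^q\to L^\infty$ smoothing (available thanks to the $C^2$ coefficient/boundary regularity) together with \eqref{thm: condB}. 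By Parseval applied to this bounded linear functional,
\begin{align*}
\sum_{k\in\N}[U(t,r)B(r)e_k]^{2}(\xi)\;\leq\;C^{2}\,(t-r)^{-d/q},
\end{align*}
uniformly in $\xi\in\overline{\mathcal{O}}$. Since $q>d$, any $\alpha\in(0,(1-d/q)/2)\subset(0,1/2)$ makes $\int_{s}^{t}(t-r)^{-2\alpha-d/q}\,dr$ finite, which yields \eqref{C-HS}; the moment bound \eqref{cond-F} then follows from the polynomial growth of $F$ and the Gaussian moments of $Z_s$ ensured by Proposition \ref{Tempo-spazio}. The main technical obstacle is securing the parabolic $L^q\to L^\infty$ smoothing bound in the genuinely non-autonomous setting (which is where the $C^{2}$-regularity of the boundary and coefficients is really used), and a secondary obstacle is the uniform-integrability argument underlying the weak-continuity step in the reflexive case.
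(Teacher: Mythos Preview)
Your plan is correct and follows the same overall architecture as the paper: verify Hypothesis~\ref{EU2} via the Acquistapace--Terreni theory (cited there as \cite[Theorem~6.3]{acq_1988} and \cite[Examples~2.8--2.9]{sch_2004}), split into Hypothesis~\ref{Hyp2} for $E=C(\overline{\mathcal{O}})$ and Hypothesis~\ref{Hyp1} for $E=L^{2(2m+1)}(\mathcal{O})$, and then establish \eqref{C-HS} to invoke Proposition~\ref{Tempo-spazio}.

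Two points of execution differ from the paper and are worth noting. First, for \eqref{C-HS} the paper uses the explicit heat-kernel representation and Gaussian bound from \cite{dan_2000}, writing $\sum_k[U(t,r)B(r)e_k]^2(\xi)=\|B(r)^{*}k(\xi,\cdot,t,r)\|_{L^2}^2$ and then estimating $\|k(\xi,\cdot,t,r)\|_{L^{q'}}$ directly; your Parseval/ultracontractivity argument is the same computation in disguise (the $L^q\to L^\infty$ norm of $U(t,r)$ \emph{is} the $L^{q'}$ norm of the kernel), so the ``main technical obstacle'' you flag is exactly what \cite{dan_2000} supplies. Second, for the weak-continuity condition \eqref{convergenza-debole} the paper gives a direct H\"older/Young estimate
\[
|\langle F(t,y_n)-F(t,y),z\rangle_H|\;\lesssim\;\|y_n-y\|_H\,\|z\|_E\,(\|y_n\|_E^{k-1}+\|y\|_E^{k-1}),
\]
which is quantitatively stronger than your Vitali/subsequence route (and avoids any subsequence bookkeeping); for the fixed point $x_0$ the paper absorbs $C_0$ into $A(t)$ rather than shifting $F$, which fits more cleanly into the existing framework since \eqref{eq} has no additive free term. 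None of these differences affect correctness.
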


\begin{remark}
    If $E=C(\overline{\mathcal{O}})$ then we can consider a more general function $F$ given by
 \[
F(t,x)(\xi):=b(t,\xi,x(\xi))+g\left(t,\max_{y \in B(\xi_0,\abs{\xi})}|x(y)|\right),\qquad t\in[0,T],\ x\in E,\ \xi\in \overline{\mathcal O},
\] 
where $b$ is given by \eqref{def:b}, $\xi_0\in \mathcal{O}$, $B(\xi_0,\abs{\xi})=\{y\in\overline{\mathcal{O}}\,:\, \abs{y-\xi_0}\leq \abs{\xi} \}$ and $g(t,\cdot)$ is a Lipschitz continuous function. In \cite{cer_dap_fla_2013} it is shown that $F(t,\cdot)$ satisfies Hypothesis \ref{Hyp2} in the special case $b\equiv 0$ and $d=1$. Of course $F$ is not well defined on $H$, so in such a case the generalized mild solution cannot be defined. See also Remarks \ref{rmkE} and \ref{rmkTE}.
\end{remark}

\begin{remark}\label{RegO}
The regularity assumptions on the boundary of $\mathcal{O}$ and on the coefficients yield the characterization of the domains of the operators $A(t)$ in $L^p(\mathcal{O})$ for every $p>1$ and the generation of analytic semigroups in $L^p(\mathcal{O})$ and in $C(\overline{\mathcal{O}})$. Without such regularity assumptions these results for the realizations of general second order elliptic operators are not known.\\
\noindent Instead, in special cases the regularity conditions on the boundary of $\mathcal{O}$ can be relaxed. For example, if for every $t\in [0,T]$ the operators $\mathcal A(t)$ and $\mathcal{C}(t)$ have the following forms
\begin{align}
    &\mathcal A(t)u=\sum_{i=1}^n a_i(t) D^2_{ii}u,\qquad a_i\in C([0,T]),\\
    &\mathcal{C}(t)u=u,
\end{align}
it is sufficient that $\mathcal{O}$ has a Lipschitz boundary.
\end{remark}

\begin{rmk}\label{prop:carlo}
     Let $\{A(t)\}_{t\in[0,T]}$ be defined in (ii) of Theorem \ref{esempio_teorema} and  assume that 
\begin{equation}\label{stima-C}
\sup_{t\in[0,T]}\norm{B(t)}_{\mathscr L(L^2(\mathcal{O});W^{2\gamma,2}(\mathcal{O}))}<+\infty,
\end{equation} 
for some $\gamma\geq 0$. By the classical Sobolev embedding if $\gamma>\frac{d}{4}-\frac{1}{2}$ then there exists $q\geq 2$, $q>d$ such that $W^{2\gamma,2}(\mathcal{O})$ is continuously embedded in $L^q(\mathcal{O})$ and so that \eqref{thm: condB} holds.
\end{rmk}

\subsection{Families of operators \texorpdfstring{$\{A(t)\}_{t\in[0,T]}$}{TEXT} and \texorpdfstring{$\{B(t)\}_{t\in[0,T]}$}{TEXT}}
Let $\{A(t)\}_{t\in[0,T]}$ be defined in (ii) of Theorem \ref{esempio_teorema}. In \cite[Theorem 6.3]{acq_1988} and \cite[Examples 2.8 and 2.9]{sch_2004} it is shown that the family $\{A(t)\}_{t\in[0,T]}$ is associated to an evolution operator $\{U(t,s)\}_{(s,t)\in\overline{\Delta}}$ and they satisfy Hypothesis \ref{EU2} both with $E=C(\overline{O})$ and $E=L^p(\mathcal{O})$ with $p\geq 2$. Let $s\in [0,T)$ and $u_0\in L^2(\mathcal{O})$. We stress that the function $u_{s,x}:=U(\cdot,s)u_0$ is the unique classical solution of the following Cauchy problem
\begin{align}\label{parabolico2}
\begin{cases}
\dfrac{du}{dt}(t,\xi)=\mathcal{A}(t)u(t,\cdot)(\xi),\ \ &(t,\xi)\in(s,T)\times\mathcal{O},\\
\mathcal{C}(t)u(t,\cdot)(\xi)=0,\ \ &(t,\xi)\in(s,T)\times\partial\mathcal{O},\\
u(s,\cdot)=u_0(\cdot).
\end{cases}
\end{align}
In particular, for every $t\in [0,T]$, we have
\begin{align}
{\rm D}(A(t))&=\left\{u\in W^{2,2}(\mathcal{O}):\ \mathcal{C}(t)u=0\right\}.
\end{align}

We now prove that the stochastic convolution process $\{Z_s(t)\}_{t\in [s,T]}$ defined by \eqref{convoluzionestocastica} verifies Hypothesis \ref{Hyp-F}(ii).

\begin{proposition}\label{prop:reg-convu}
 Let $\{A(t)\}_{t\in[0,T]}$ be defined in (ii) of Theorem \ref{esempio_teorema}. Assume that there exists $q\geq 2$, $q>d$ such that $B(t)\in\mathscr{L}(L^2(\mathcal O),L^q(\mathcal O))$  and
\begin{equation}\label{stima-B}
C:=\sup_{t\in[0,T]}\norm{B(t)}_{\mathscr L(L^2(\mathcal{O});L^q(\mathcal{O}))}<+\infty.
\end{equation}  
Then for every $s\in [0,T)$ the stochastic convolution process $\{Z_s(t)\}_{t\in [s,T]}$ belongs to $\mathcal{C}^p([s,T],C(\overline{\mathcal{O}}))$, for every $p\geq 2$.
\end{proposition}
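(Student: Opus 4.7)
The plan is to verify that the present setting satisfies Hypothesis \ref{hyp: Uts} and then check the kernel condition \eqref{C-HS} of Proposition \ref{Tempo-spazio}, from which the conclusion will follow directly. First, I would invoke the Acquistapace--Terreni construction cited earlier (\cite[Theorem 6.3]{acq_1988} and \cite[Examples 2.8, 2.9]{sch_2004}) to extend $\{U(t,s)\}_{(s,t)\in\overline{\Delta}}$ to a strongly continuous evolution operator on every $L^p(\mathcal{O})$ with $p\geq2$, and to obtain the parabolic smoothing estimate
\[
\norm{U(t,s)f}_{W^{\varepsilon,p}(\mathcal O)}\leq \frac{C_{\varepsilon,p}}{(t-s)^{\varepsilon/2}}\norm{f}_{L^{p}(\mathcal O)},\qquad \varepsilon\in(0,1),\ p\geq 2,
\]
i.e.\ item (iii) of Hypothesis \ref{hyp: Uts} with $\eta=2$.

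The core of the argument is a pointwise kernel bound: for any bounded linear $S:L^2(\mathcal{O})\to C(\overline{\mathcal{O}})$ and any $\xi\in\overline{\mathcal{O}}$, the functional $\Lambda_\xi(f):=(Sf)(\xi)$ is continuous on $L^2(\mathcal{O})$ with norm at most $\norm{S}_{\mathscr{L}(L^2,C(\overline{\mathcal{O}}))}$. By Riesz representation and Parseval,
\[
\sum_{k\in\N}\bigl[Se_k\bigr]^{2}(\xi)=\sum_{k\in\N}|\Lambda_\xi(e_k)|^{2}\leq \norm{S}^{2}_{\mathscr{L}(L^2(\mathcal{O}),C(\overline{\mathcal{O}}))},
\]
with the right-hand side independent of $\xi$.

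I would then apply this with $S=U(t,r)B(r)$, factoring through $L^q(\mathcal{O})$:
\[
\norm{U(t,r)B(r)}_{\mathscr{L}(L^2,C(\overline{\mathcal{O}}))}\leq \norm{U(t,r)}_{\mathscr{L}(L^q,C(\overline{\mathcal{O}}))}\,\norm{B(r)}_{\mathscr{L}(L^2,L^q)}.
\]
Since $q>d$, fix any $\varepsilon\in(d/q,1)$; the Sobolev embedding $W^{\varepsilon,q}(\mathcal{O})\hookrightarrow C(\overline{\mathcal{O}})$ combined with the smoothing estimate yields $\norm{U(t,r)}_{\mathscr{L}(L^q,C(\overline{\mathcal{O}}))}\leq \widetilde C_{\varepsilon,q}\,(t-r)^{-\varepsilon/2}$. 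Using \eqref{stima-B} and Fubini--Tonelli, for any $\alpha\in(0,1/2)$,
\[
\sup_{\xi\in\overline{\mathcal{O}}}\sum_{k\in\N}\int_s^t(t-r)^{-2\alpha}\bigl[U(t,r)B(r)e_k\bigr]^{2}(\xi)\,dr\leq \widetilde C_{\varepsilon,q}^{2}C^{2}\int_s^t(t-r)^{-2\alpha-\varepsilon}\,dr.
\]
The integral is finite provided $2\alpha+\varepsilon<1$, and since $\varepsilon$ can be chosen arbitrarily close to $d/q$, the range $\alpha\in\bigl(0,(q-d)/(2q)\bigr)\subset(0,1/2)$ is nonempty. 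This secures \eqref{C-HS} for such $\alpha$, and Proposition \ref{Tempo-spazio} then gives $\{Z_s(t)\}_{t\in[s,T]}\in\mathcal{C}^{p}([s,T];C(\overline{\mathcal{O}}))$ for every $p\geq 2$.

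The step I expect to be the most delicate is the passage from an operator norm bound on $S:L^2\to C(\overline{\mathcal{O}})$ to the pointwise control of $\sum_k[Se_k]^{2}(\xi)$ uniformly in $\xi$: once this Riesz/Parseval observation is in place, the remaining estimates are standard Sobolev embedding plus the parabolic smoothing bound. The verification of Hypothesis \ref{hyp: Uts} is essentially bookkeeping over existing literature, but it is what makes the factorization method of \cite{dap_kwa_zab_1987, seid_1993} usable in the present non-autonomous framework.
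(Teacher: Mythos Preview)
Your proposal is correct and reaches the same conclusion, but the way you establish the kernel condition \eqref{C-HS} differs from the paper's route. Both arguments begin by verifying Hypothesis~\ref{hyp: Uts} via the Acquistapace--Terreni theory and interpolation (with $\eta=2$), and both ultimately reduce $\sum_k[U(t,r)B(r)e_k]^2(\xi)$ to the squared $L^2$-norm of the Riesz representative of the evaluation functional $\Lambda_\xi$; this is exactly the Parseval identity you highlight, and in the paper it appears as $\norm{B(r)^\star k(\xi,\cdot,t,r)}_{L^2}^2$ where $k$ is the integral kernel of $U(t,r)$. The genuine divergence is in how this quantity is then bounded. The paper invokes the Gaussian heat-kernel estimates of \cite{dan_2000} to control $\norm{k(\xi,\cdot,t,r)}_{L^{q'}}$ directly, obtaining the sharp exponent $(t-r)^{-d/q}$. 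You instead stay at the operator level and use the Sobolev embedding $W^{\varepsilon,q}\hookrightarrow C(\overline{\mathcal{O}})$ (valid for $\varepsilon>d/q$) together with the smoothing bound already proved for Hypothesis~\ref{hyp: Uts}, yielding $(t-r)^{-\varepsilon}$ for any $\varepsilon>d/q$. Your route is more self-contained---it recycles the smoothing estimate rather than importing an additional kernel result---while the paper's kernel approach gives the optimal exponent and connects explicitly to ultracontractivity, which is the point emphasized in Remark~\ref{rmk:carlo}. Either bound suffices for \eqref{C-HS}, since $q>d$ guarantees a nonempty range of admissible $\alpha$.
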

\begin{proof}
We want to apply Proposition \ref{Tempo-spazio}. First of all, we prove that Hypothesis \ref{hyp: Uts} holds. Under hypothesis (iii) of Theorem \ref{esempio_teorema} for all $t\in[0,T]$ we can consider the realization $A_p(t)$ in $L^p(\mathcal O)$ of $\mathcal{A}(t)$ with boundary conditions $\mathcal C(t)$. Since Hypothesis \ref{ATipotesi} are satisfied in $L^p(\mathcal O)$ for all $p\in[1,+\infty)$ (see \cite[Remark 2.8 and Remark 2.9]{sch_2004}), we can consider the evolution operator $\{U_p(t,s)\}_{(s,t)\in\overline{\Delta}}$ associated to the family $\{A_p(t)\}_{t\in[0,T]}$. By \cite[Theorem 3.3]{acq_1988}, for all $x\in L^p(\mathcal O)$ and $s\in [0,T)$, the Cauchy problem
\begin{align} \label{probLp}
\begin{cases}
u'(t)=A_p(t)u(t)\ \ t\in (s,T], \\
u(s)=x,
\end{cases}
\end{align} 
has a unique strong solution given by $u_{s,x}(t):=U_p(t,s)x$, for all $t\in [s,T]$ . Hence if $p>q$ then $U_p(t,s)x=U_q(t,s)x$ for all $x\in L^p(\mathcal O).$ For this reason, we omit the subindex $p$ and we simply write $U(t,s)$.
Moreover by Theorem \ref{thm23paolo}(iv) there exist $C_{p},K_p>0$ such that
\begin{align}
&\norm{U(t,s)}_{\mathscr{L}(L^p(\mathcal O))}\leq C_p,\ \ (s,t)\in\overline{\Delta},\\
&\norm{U(t,s)}_{\mathscr{L}(L^p(\mathcal O);W^{2,p}(\mathcal O))}=\norm{U(t,s)}_{\mathscr{L}(L^p(\mathcal O);{\rm D}(A(t))}\leq \frac{K_p}{t-s},\ \ (s,t)\in\Delta.
\end{align}
By interpolation for all $\gamma\in(0,1)$ there exists $C_{p,\gamma}>0$ such that
\begin{align}
\norm{U(t,s)}_{\mathscr{L}(L^p(\mathcal O);W^{2\gamma,p}(\mathcal O))}\leq \frac{C_{p,\gamma}}{(t-s)^\gamma},\ \ (s,t)\in\Delta,
\end{align}
and so that Hypothesis \ref{hyp: Uts} holds. It remains to prove \eqref{C-HS}. By \cite{dan_2000} $U(t,s)$ extends to a strongly continuous evolution operator on $L^1(\mathcal{O})$ and there exists $k(\cdot,\cdot,t,s)\in L^\infty\left(\mathcal{O}\times\mathcal{O}\right)$ such that
\begin{equation}\label{kergauss}
U(t,s)\varphi(\xi)=\int_\mathcal{O} k(\xi,y,t,s)\varphi(y)\,dy,\quad \varphi\in L^1(\mathcal{O}),\ (s,t)\in\Delta,\ \xi\in\mathcal O.
\end{equation}
Moreover by \cite[Theorem\,6.1]{dan_2000} there exist $M, m>0$ such that
\begin{equation}\label{stimakergauss1}
\abs{k(\xi,y,t,s)}\leq\frac{M}{(t-s)^{\frac{d}{2}}}\,e^{-\frac{\abs{\xi-y}^2}{m(t-s)}},\quad \xi,y\in\mathcal{O},\ (s,t)\in\Delta.
\end{equation}
By \eqref{stimakergauss1}, for every $p>1$ there exists $M_p:=M_p(\mathcal{O})>0$ such that for every $\xi\in\mathcal{O}$ and $r<t$ we have
\begin{align}\label{stimakernel-p}
\norm{k(\xi,\cdot,t,r)}^p_{L^{p}(\mathcal{O})}\leq \frac{M}{(t-r)^{\frac{dp}{2}}}\int_{\mathcal{O}}e^{-\frac{p}{m}\frac{\abs{\xi-y}}{t-r}}\,dy=\frac{M_p}{(t-r)^{\frac{d(p-1)}{2}}}.
\end{align}
Now by \eqref{kergauss} for every $r<t$ and $\xi\in\mathcal{O}$ we have
\begin{align*}
\sum_{k=1}^{+\infty}[U(t,r)B(r)e_k]^2(\xi)&=\sum_{k=1}^{+\infty}\left(\int_{\mathcal{O}}k(\xi,y,t,r)[B(r)e_k](y)dy\right)^2=\sum_{k=1}^{+\infty}\ps{k(\xi,\cdot,t,r)}{B(r)e_k}_{L^2(\mathcal{O})}^2\\
&=\sum_{k=1}^{+\infty}\ps{B(r)^\star k(\xi,\cdot,t,r)}{e_k}_{L^2(\mathcal{O})}^2=\norm{B(r)^\star k(\xi,\cdot,t,r)}_{L^2(\mathcal{O})}^2,
\end{align*}
and so that by \eqref{stima-B} and \eqref{stimakernel-p} (with $p=q':=q(q-1)^{-1}$), for every $r<t$ and $\xi\in\mathcal{O}$ we have
\begin{align*}
\sum_{k=1}^{+\infty}[U(t,r)B(r)e_k]^2(\xi)\leq C^2\norm{k(\xi,\cdot,t,r)}^2_{L^{q'}(\mathcal{O})}\leq \frac{C^2M_{q'}^{\frac{2}{q'}}}{(t-r)^{\frac{d}{q}}}.
\end{align*}
Hence, since $q>d$, there exists $\alpha\in (0,\frac{1}{2})$ such that \eqref{C-HS} holds true. Hence, the statement follows by Proposition \ref{Tempo-spazio}.
\end{proof}

Taking into account Remarks \ref{RegO} and \ref{prop:carlo}, we obtain the following corollary.

\begin{corollary}\label{prop:carlo-vero}
    Let $\mathcal{O}$ be a bounded subset of $\R^d$ with Lipschitz boundary. Let $A$ be the realization of the Laplace operator in $L^2(\mathcal{O})$ with Dirichlet boundary condition. Let $\gamma\geq 0$ and let $\{W_A(t)\}_{t\geq 0}$ be the stochastic convolution process defined by
    \[
    W_A(t):=\int_0^te^{(t-s)A}(-A)^\gamma dW(s).
    \]
If $\gamma>\frac{d}{4}-\frac{1}{2}$, then the stochastic convolution process $\{W_A(t)\}_{t\geq 0}$ belongs to $\mathcal{C}^p([0,T],C(\overline{\mathcal{O}}))$, for every $T>0$ and $p\geq 2$. 
\end{corollary}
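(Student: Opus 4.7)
The plan is to view Corollary \ref{prop:carlo-vero} as the autonomous specialization of Proposition \ref{prop:reg-convu} combined with Remark \ref{prop:carlo}. Concretely, I would take $A(t)\equiv A$, the Dirichlet Laplacian on $\mathcal O$, and $B(t)\equiv(-A)^\gamma$, interpreting the latter via the domain of the fractional power so that it acts as a bounded map from $L^2(\mathcal O)$ into a suitable Sobolev scale.

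The first step is to verify that the semigroup $\{e^{tA}\}_{t\ge 0}$ satisfies Hypothesis \ref{hyp: Uts} in spite of $\mathcal O$ being merely Lipschitz. By Remark \ref{RegO}, the pure Dirichlet realization of the Laplacian on a Lipschitz bounded domain generates an analytic semigroup on $L^p(\mathcal O)$ for every $p\ge 2$, and — crucially for the kernel-based argument underlying Proposition \ref{prop:reg-convu} — the Gaussian upper bound \eqref{stimakergauss1} from \cite{dan_2000} remains available. Interpolating between the trivial $L^p$-estimate and the analyticity bound $\|e^{tA}\|_{\mathscr L(L^p;W^{2,p})}\le C/t$ then gives the Sobolev-regularization estimate \eqref{normasobF} with $\eta=2$, for every $\varepsilon\in(0,1)$ and $p\ge 2$, so Hypothesis \ref{hyp: Uts} is fulfilled.

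The second step is to invoke Remark \ref{prop:carlo}. For the Dirichlet Laplacian the fractional-power domain satisfies $D((-A)^\gamma)\hookrightarrow W^{2\gamma,2}(\mathcal O)$, so that $\|(-A)^\gamma\|_{\mathscr L(L^2;W^{2\gamma,2})}<+\infty$, i.e.\ the assumption of Remark \ref{prop:carlo} holds. The classical Sobolev embedding $W^{2\gamma,2}(\mathcal O)\hookrightarrow L^q(\mathcal O)$ holds for some $q\ge 2$ with $q>d$ precisely when $\gamma>\tfrac{d}{4}-\tfrac{1}{2}$, which is the standing hypothesis of the corollary. This gives condition \eqref{stima-B} of Proposition \ref{prop:reg-convu}, which then delivers $\{W_A(t)\}_{t\in[0,T]}=\{Z_0(t)\}_{t\in[0,T]}\in\mathcal C^p([0,T];C(\overline{\mathcal O}))$ for every $T>0$ and $p\ge 2$, as claimed.

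The main technical delicacy is the fractional-power identification $D((-A)^\gamma)\hookrightarrow W^{2\gamma,2}(\mathcal O)$ for the Dirichlet Laplacian on a Lipschitz domain, since the naive Sobolev description of the domain may fail at half-integer exponents where the Dirichlet trace condition becomes singular. However the strict inequality $\gamma>\tfrac{d}{4}-\tfrac{1}{2}$ leaves room to perturb $\gamma$ infinitesimally and circumvent any exceptional values; any $L^q$-embedding with $q>d$ suffices to trigger Proposition \ref{prop:reg-convu}, so the conclusion is not weakened.
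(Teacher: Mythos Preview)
Your overall strategy---specialize Proposition~\ref{prop:reg-convu} to the autonomous Dirichlet Laplacian, invoke Remark~\ref{RegO} to justify the Lipschitz-domain setting, and invoke Remark~\ref{prop:carlo} to convert a Sobolev mapping bound into the $L^q$ condition~\eqref{stima-B}---is exactly the paper's; its entire proof is the single sentence citing those two remarks.

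However, your second step contains a genuine error. You assert that $D((-A)^\gamma)\hookrightarrow W^{2\gamma,2}(\mathcal O)$ implies $\|(-A)^\gamma\|_{\mathscr L(L^2;W^{2\gamma,2})}<+\infty$, but this is a non sequitur: for $\gamma>0$ the operator $(-A)^\gamma$ is unbounded on $L^2(\mathcal O)$ and does not map $L^2$ into any positive-order Sobolev space. The corollary as stated evidently carries a sign slip and the intended diffusion is $B=(-A)^{-\gamma}$ (compare the reference \cite{orr_fuh_16} in Remark~\ref{rmk:carlo}); with that choice $B$ maps $L^2$ boundedly into $D((-A)^\gamma)$, and \emph{then} the embedding $D((-A)^\gamma)\hookrightarrow W^{2\gamma,2}(\mathcal O)$ gives \eqref{stima-C}. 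A direct eigenvalue count confirms the sign: with $B=(-A)^{\gamma}$ and Weyl asymptotics $\lambda_k\sim k^{2/d}$, condition~\eqref{C-HS} reduces to $\sum_k\lambda_k^{2\gamma-1}<\infty$, i.e.\ $\gamma<\tfrac12-\tfrac d4$, the reverse of the stated hypothesis, whereas $B=(-A)^{-\gamma}$ yields $\gamma>\tfrac d4-\tfrac12$ as claimed. Once you correct the sign your argument goes through and coincides with the paper's.
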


\begin{remark}\label{rmk:carlo}
    When $\mathcal{O}$ is a ball, the condition $\gamma>\frac{d}{4}-\frac{1}{2}$ of Corollary \ref{prop:carlo-vero} is even sharper than the one in \cite[Section 4.2]{orr_fuh_16}. In particular, the condition on $\gamma$ of Corollary \ref{prop:carlo-vero} is independent of the shape of the boundary of $\mathcal{O}$ and coincides with the bound obtained in \cite[Section 4.2]{orr_fuh_16} when $\mathcal O$ is a hypercube (the best case). In \cite[Section 4.2]{orr_fuh_16} the authors use an orthonormal basis $\{e_k\}_{k \in \mathbb{N}}$ consisting of eigenvectors of $A$ and to check a condition analogous to \eqref{C-HS} they estimate $\lvert e_k(\xi) \rvert$ by the uniform norm of $e_k$, (see \cite[Hypothesis 4.1 (iv),(v)]{orr_fuh_16}).
    Instead, in the proof of Proposition \ref{prop:reg-convu}, to prove \eqref{C-HS} we exploit the kernel estimate \eqref{stimakergauss1}, similarly to \cite{cer_lun_2025}. Such kernel estimate is related to ultraboundedness results for $U(t,s)$, see \cite[Sections 5 and 6]{dan_2000}.
\end{remark}

\subsection{Examples for \texorpdfstring{$F$}{TEXT}}

Let $F$ be defined by hypothesis (v) of Theorem \ref{esempio_teorema}. First, we note that $F$ verifies Hypothesis \ref{EU2}(\ref{EU2.2}), see \cite[Sect. 6.1.1]{cer_2001} for the case $E=C(\overline{\mathcal{O}})$ and see \cite[Chap. 4]{dap_2004} for the case $E=L^{2(2m+1)}(\mathcal O)$. Now we discuss the Hypotheses \ref{Hyp1} and \ref{Hyp2}.
\begin{itemize}
    \item If  $E=L^{2(2m+1)}(\mathcal O)$ the $F$ verifies Hypothesis \ref{Hyp1}. Indeed, if $\{y_n\}_{n\in\N}$ and $y$ as in Hypothesis \ref{Hyp1}, $z\in E$ and $t\in[0,T]$ then by the Young inequality and the H\"older inequality there exist $K_1,K_2>0$ such that 
\begin{align*}
\abs{\ps{F(t,y_n)-F(t,y)}{z}_H}&=\int_\mathcal O \abs{b(t,\xi,y_n(\xi))-b(t,\xi,y(\xi))}\abs{h(\xi)}d\xi\\
&\leq \sum_{k=0}^{2m+1}\norm{C_k}_\infty \int_\mathcal O \abs{y_n(\xi)^k-y(\xi)^k}\abs{h(\xi)}d\xi\\
&\leq K_1\sum_{k=0}^{2m+1}\norm{C_k}_\infty \int_\mathcal O \abs{y_n(\xi)-y(\xi)}\abs{h(\xi)}(\abs{y_n(\xi)}^{k-1}+\abs{y(\xi)^{k-1}})d\xi\\
&\leq K_2\sum_{k=0}^{2m+1}\norm{C_k}_\infty \norm{y_n-y}_H\norm{h}_E(\norm{y_n}_E^{k-1}+\norm{y}_E^{k-1}),
\end{align*}
and so Hypothesis \ref{Hyp1} holds. If $C_0\not\equiv 0$ we can reduce to the previous case translating the operator $A(t)$ by $C_0$.
\item If  $E=C(\overline{\mathcal{O}})$ the $F$ verifies Hypothesis \ref{Hyp2}, see  \cite[Sect. 6.1.1]{cer_2001}.
\end{itemize}
 Finally, we note that if $E=C(\overline{\mathcal{O}})$ then 
\[
\norm{F(t,x)}_E\leq M\left(1+\norm{x}_E^{2m+1}\right),\qquad x\in C(\overline{\mathcal{O}}),\; t\in [0,T],
\]
for some constant $M>0$. Instead if $E=L^{2(2m+1)}(\mathcal{O})$ then
\[
\norm{F(t,x)}_E\leq M\left(1+\norm{x}_{L^{2(2m+1)^2}(\mathcal{O})}^{2m+1}\right),\qquad x\in L^{2(2m+1)^2}(\mathcal{O}),\; t\in [0,T],
\]
for some constant $M>0$. Hence, by Proposition \eqref{prop:reg-convu} condition \eqref{cond-F} holds.

\subsubsection*{Acknowledgments}
The authors are members of GNAMPA-INdAM. The second author was funded by the GNAMPA project \textit{Seconda quantizzazione per lo studio di semigruppi di Mehler generalizati}.


\end{document}